\newcommand{\NOTESON}{0}
\newcommand{\Debug}{0}
\newcommand{\COLORON}{0}
\newcommand{\comment}[1]{}
\newcommand{\COMMENT}[1]{}
\newcommand{\defi}[1]{{\color{VioletRed}\emph{#1}}}
\newcommand{\acknowledgement}{\section*{Acknowledgement}}
\newtheorem{proposition}{Proposition}[section]
\newtheorem{theorem}[proposition]{Theorem}
\newtheorem{corollary}[proposition]{Corollary}
\newtheorem{lemma}[proposition]{Lemma}
\newtheorem{observation}[proposition]{Observation}
\newtheorem{conjecture}{{\color{red}Conjecture}}[section]
\newtheorem{problem}[conjecture]{{\color{red}Problem}}
\newtheorem*{noTheorem}{Theorem}
\newtheorem{examp}[proposition]{Example}
\newcommand{\example}[2]{\begin{examp} \label{#1} {{#2}}\end{examp}}
\newcommand{\kreis}[1]{\mathaccent"7017\relax #1}
\newcommand{\FIG}{0}
\newcommand{\note}[1]{ 

	\ 

	{\color{blue} \hspace*{-60pt} NOTE: \color{Turquoise}{\small  \tt \begin{minipage}[c]{1.1\textwidth}  #1 \end{minipage} \ignorespacesafterend }} 
	
	\ 
	
	}
\else \newcommand{\note}[1]{} \fi
\newcommand{\sss}{\ensuremath{\bowtie \bowtie \bowtie\ }}
\else \newcommand{\sss}{} \fi
\newcommand{\fig}[1]{Figure ``{#1}''}
\else \newcommand{\fig}[1]{Figure~\ref{#1}} \fi
\renewcommand{\color}[1]{}
\newcommand{\showFig}[2]{
   \begin{figure}[htbp]
   \centering
   \noindent
   \epsfbox{#1.eps}
   \caption{\small #2}
   \label{#1}
   \end{figure}
}
\newcommand{\N}{\mathbb N}
\newcommand{\R}{\mathbb R}
\newcommand{\Z}{\mathbb Z}
\newcommand{\Q}{\mathbb Q}
\newcommand{\cf}{\mathcal F}
\newcommand{\ch}{\mathcal H}
\newcommand{\ci}{\mathcal I}
\newcommand{\cp}{\mathcal P}
\newcommand{\cu}{\mathcal U}
\newcommand{\oo}{\ensuremath{\omega}}
\newcommand{\OO}{\ensuremath{\Omega}}
\newcommand{\alp}{\ensuremath{\alpha}}
\newcommand{\del}{\ensuremath{\delta}}
\newcommand{\eps}{\ensuremath{\epsilon}}
\newcommand{\sig}{\ensuremath{\sigma}}
\newcommand{\el}{\ensuremath{\ell}}
\newcommand{\ccc}{\ensuremath{\mathcal C}}
\newcommand{\ccg}{\ensuremath{\mathcal C(G)}}
\newcommand{\fcg}{\ensuremath{|G|}}
\newcommand{\zero}{\mathbb 0}
\newcommand{\sm}{\backslash}
\newcommand{\restr}{\upharpoonright}
\newcommand{\nin}{\ensuremath{{n\in\N}}}
\newcommand{\iin}{\ensuremath{{i\in\N}}}
\newcommand{\unin}{\ensuremath{[0,1]}}
\newcommand{\pth}[2]{\ensuremath{#1}\text{--}\ensuremath{#2}~path}
\newcommand{\pths}[2]{\ensuremath{#1}\text{--}\ensuremath{#2}~paths}
\newcommand{\arc}[2]{\ensuremath{#1}\text{--}\ensuremath{#2}~arc}
\newcommand{\arcs}[2]{\ensuremath{#1}\text{--}\ensuremath{#2}~arcs}
\newcommand{\seq}[1]{\ensuremath{(#1_i)_{i\in\N}}} 
\newcommand{\sseq}[2]{\ensuremath{(#1_i)_{i\in #2}}} 
\newcommand{\fml}[1]{\ensuremath{\{#1_i\}_{i\in I}}} 
\newcommand{\flo}[2]{\ensuremath{#1}\text{--}\ensuremath{#2}~flow} 
\newcommand{\g}{\ensuremath{G\ }}
\newcommand{\G}{\ensuremath{G}}
\newcommand{\s}{s}
\newcommand{\half}{\frac{1}{2}}
\newcommand{\Lr}[1]{Lemma~\ref{#1}}
\newcommand{\Tr}[1]{Theorem~\ref{#1}}
\newcommand{\Sr}[1]{Section~\ref{#1}}
\newcommand{\Cr}[1]{Corollary~\ref{#1}}
\newcommand{\Cnr}[1]{Con\-jecture~\ref{#1}}
\newcommand{\Er}[1]{Example~\ref{#1}}
\newcommand{\lf}{locally finite}
\newcommand{\lfg}{locally finite graph}
\newcommand{\bos}{basic open set}
\newcommand{\nlf}{non-locally-finite}
\newcommand{\nlfg}{non-locally-finite graph}
\newcommand{\fhcy}{Hamilton cycle}
\newcommand{\hcy}{Hamilton circle}
\newcommand{\et}{Euler tour}
\renewcommand{\iff}{if and only if}
\newcommand{\fe}{for every}
\newcommand{\st}{such that}
\newcommand{\ti}{there is}
\newcommand{\obda}{without loss of generality}
\newcommand{\btdo}{by the definition of}
\newcommand{\wrt}{with respect to}
\newcommand{\inm}{infinitely many}
\newcommand{\ises}{is easy to see}
\newcommand{\FC}{Freudenthal compactification}
\newcommand{\tcs}{topological cycle space}
\newcommand{\tocir}{topological circle}
\newcommand{\tet}{topological Euler tour}
\newcommand{\labtequ}[2]{ \begin{equation} \label{#1} 	\begin{minipage}[c]{0.9\textwidth}  #2 \end{minipage} \ignorespacesafterend \end{equation} }
\newcommand{\mymargin}[1]{
  \marginpar{%
    \begin{minipage}{\marginparwidth}\small%
      \begin{flushleft}%
        {\color{blue}#1}%
      \end{flushleft}%
   \end{minipage}%
  }%
}%
\newcommand{\afsubm}[1]{ \ifnum \Debug = 1 {\mymargin{#1}}
\fi}
\newcommand{\mySection}[2]{}
\newcommand{\DK}{Diestel and K\"uhn}
\newcommand{\BS}{Bruhn and Stein}
\newcommand{\RD}{Diestel}
\newcommand{\CDB}{\cite[Section~8.5]{diestelBook05}}
\newcommand{\LemArcC}{\cite[p.~208]{ElemTop}}
\newcommand{\LemArc}[1]{
	\begin{lemma}[\LemArcC]
	\label{#1}
	The image of a topological path with endpoints $x,y$ in a Hausdorff space $X$ contains an arc in $X$ between $x$ and $y$.
	\end{lemma}
}
\newcommand{\LemCombStarC}{\cite[Lemma~8.2.2]{diestelBook05}} 
\newcommand{\LemCombStar}[1]{
	\begin{lemma}[\LemCombStarC]
	\label{#1}
	Let $U$ be an infinite set of vertices in a connected graph $G$. Then $G$ contains either a ray $R$ and infinitely many pairwise disjoint \pths{U}{R} or a subdivided star with infinitely many leaves in $U$. 
	\end{lemma}
}
\newcommand{\LemCycDecC}{\cite[Theorem~8.5.8]{diestelBook05}} 
\newcommand{\LemCycDec}[1]{
	\begin{lemma}[\LemCycDecC]
	\label{#1}
	Every element of $\ccg$ is a disjoint union of circuits.
	\end{lemma}
}
\newcommand{\LemInjHomC}{\cite{armstrong}} 
\newcommand{\LemInjHom}[1]{
	\begin{lemma}[\LemInjHomC] \label{#1}
	A continuous bijection from a compact space to a Hausdorff space is a homeomorphism.
	\end{lemma}
}
\newcommand{\ThmMacLaneC}{\cite{maclane37}}
\newcommand{\ThmMacLane}[1]{
	\begin{theorem}[\ThmMacLaneC]
	\label{#1}
A finite graph is planar if and only if its cycle space \ccg\ has a simple generating set.
	\end{theorem}
}
\newcommand{\gid}[1]{\overline{#1}}
\newcommand{\ltopf}[1]{\ensuremath{#1\text{-}TOP}}
\newcommand{\ltop}{\ensuremath{\ltopf{\ell}}}
\newcommand{\ltopx}[1]{\ensuremath{ |#1|_\ell }}
\newcommand{\ltopxl}[2]{\ensuremath{|#2|_{#1} }}
\newcommand{\ltp}{\ltopx{$G$}}
\newcommand{\blg}{\ensuremath{\partial^\ell G}}
\newcommand{\bxx}[2]{\ensuremath{\left<#2\right>_#1 }}
\newcommand{\blom}{\bxx{\ell}{\oo}}
\newcommand{\mtop}{\ensuremath{MTOP}}
\newcommand{\mtp}{\mtop($G$)}
\newcommand{\mtopx}[1]{MTOP\ensuremath{ ({#1}) } }
\newcommand{\lER}{\ensuremath{\ell: E(G) \to \R_{>0}}}
\newcommand{\etop}{\ensuremath{ETOP}}
\newcommand{\eti}{\ensuremath{\etop(G)}}
\newcommand{\GetopG}{\ensuremath{ETOP'}}
\newcommand{\Getop}{\ensuremath{\GetopG(G)}}
\newcommand{\ccl}{\ensuremath{\mathcal C_\ell}}
\newcommand{\cclg}{\ensuremath{\mathcal H_\ell(G)}}
\newcommand{\geol}{\ensuremath{\mathcal G_\ell(G)}}
\newcommand{\EOO}{\ensuremath{\OO'}}
\newcommand{\eqT}{\approx}
\newcommand{\ltpn}{\ensuremath{\ltpr{n}}}
\newcommand{\ltpr}[1]{\ensuremath{\ltp \sm \kreis{E_{#1}}}}
\newcommand{\lL}{\ell_L}
\newcommand{\lhom}{\ensuremath{H_\ell}}
\newcommand{\lhomp}{\ensuremath{H'_\ell}}
\newcommand{\lhomx}[1]{\ensuremath{\lhom(#1)}}
\newcommand{\hH}{\ensuremath{\hat{H'_1}}}
\newcommand{\ec}[1]{\ensuremath{\overline{#1} } }
\newcommand{\wsp}[1]{\ensuremath{\llparenthesis #1 \rrparenthesis}}
\newcommand{\asp}[1]{\ensuremath{\left<#1\right>}}
\newcommand{\Cs}{Cauchy sequence}
\newcommand{\bp}{boundary point}
\newcommand{\njp}{non-stretching}
\newcommand{\finl}{\ensuremath{\sum_{e \in E(G)} \ell(e) < \infty}}
\newcommand{\vend}{vertex-end}
\newcommand{\eend}{edge-end}
\newcommand{\procir}{proper circle}
\newcommand{\clex}{circlex} 
\newcommand{\clexs}{circlexes} 
\newcommand{\rep}{representative}
\newcommand{\gms}{geodesic metric space}
\newcommand{\hycg}{\ensuremath{\ch (G)}}
\newcommand{\flcg}{\ensuremath{\ch_f (G)}}
\newcommand{\ksl}{Kirchhoff's second law}
\newcommand{\cutr}{non-elusive} 
\newcommand{\vol}{area}
\newcommand{\evol}{area}
\title{Graph topologies induced by edge lengths}
\author{Agelos Georgakopoulos\thanks{Supported by a grant of the German-Israeli Foundation and by an FWF grant. This work was conceived and written when the author was at the University of Hamburg.} \medskip \\
 {Technische Universit\"at Graz}\\
  {Steyrergasse 30, 8010}\\
  {Graz, Austria}\\
}
\begin{document}
\maketitle

\noindent

\begin{abstract}
Let $G$ be a graph each edge $e$ of which is given a length $\ell(e)$. This naturally induces a distance $d_\ell(x,y)$ between any two vertices $x,y$, and we let \ltp\ denote the completion of the corresponding metric space. It turns out that several well studied topologies on infinite graphs are special cases of $\ltp$. Moreover, it seems that $\ltp$ is the right setting for studying various problems. The aim of this paper is to introduce \ltp, providing basic facts,  motivating examples and open problems, and indicate possible applications. 

Parts of this work suggest interactions between graph theory and other fields, including algebraic topology and geometric group theory.

\end{abstract}

\section{Introduction}

Let $G$ be a graph each edge $e$ of which is given a length $\ell(e)$. This naturally induces a distance $d_\ell(x,y)$ between any two vertices $x,y$, and we let \defi{$\ltop(G)$}, or \defi{\ltp} for short, denote the completion of the corresponding metric space. It turns out that several well studied topologies on infinite graphs are special cases of $\ltop$, see \Sr{subSpec}. 

The space \ltp\ has already been considered, for special cases of $\ell$, by several authors who in most cases were apparently unaware of each other's work: Floyd \cite{floyd} used it in order to study Kleinian groups, and his work was taken up by Gromov who related it to hyperbolic graphs and groups,  see \Sr{secHyp}. Benjamini and Schramm  used it to prove that planar transient graphs admit harmonic Dirichlet functions \cite{BeSchrHar}, and to study sphere packings of graphs in $\R^d$ \cite{BeSchrLac}. Carlson \cite{CarBou} studied the Dirichlet Problem at the boundary of \ltp. Finally, the author used \ltp\ in \cite{AgCurrents} in order to prove the uniqueness of currents in certain electrical networks, see also \Sr{inele}.

The aim of this paper is to introduce \ltop\ in greater generality, providing definitions, motivating examples, basic facts and open problems, and indicate possible applications. 

\subsection{Interesting special cases of \ltop} \label{subSpec}

In \Sr{secSpe} we will show how some well known topologies on graphs can be obtained as special cases of \ltop\ by choosing $\lER$ appropriately. The most basic such example is the \FC\ (also known as the end compactification) of a \lf\ graph:

\begin{theorem} \label{finlf}
If $G$ is \lf\ and $\sum_{e \in E(G)} \ell(e) < \infty$ then $\ltp$ is homeo\-morphic to the \FC\ \fcg\ of \G.
\end{theorem}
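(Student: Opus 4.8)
The plan is to construct a continuous bijection $f\colon\fcg\to\ltp$ and then apply the standard fact that a continuous bijection from a compact space to a Hausdorff space is a homeomorphism; this is legitimate because $\fcg$ is compact (\CDB) whereas $\ltp$, being a metric space, is Hausdorff. I may assume $G$ connected, arguing componentwise otherwise. Fix a vertex $v_0$ and let $B_n:=\{v\in V(G):d_G(v_0,v)\le n\}$ be the ball of graph-radius $n$; as $G$ is \lf, each $B_n$ is finite and $\bigcup_n B_n=V(G)$. I view $G$, equipped with $d_\ell$, both as a dense subspace of its completion $\ltp$ and as a dense open subspace of $\fcg$, noting that in either case the subspace topology on $G$ is the usual $1$-complex topology --- for $\ltp$ this uses local finiteness, since then the metric balls around a vertex and its ``star'' neighbourhoods generate the same topology. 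I let $f$ be the identity on $G$.

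To define $f$ at an end $\omega$, write $C_n(\omega)$ for the component of $G-B_n$ containing the rays of $\omega$. The edge sets $E(C_n(\omega))$ shrink with $n$ and have empty intersection, so $\sum_{e\in E(C_n(\omega))}\ell(e)\to 0$ since \finl; in particular $\operatorname{diam}_{d_\ell}(C_n(\omega))\le\sum_{e\in E(C_n(\omega))}\ell(e)\to 0$. Thus the closures $\overline{C_n(\omega)}$ (taken in $\ltp$) form a nested sequence of non-empty closed sets with diameters tending to $0$, and I set $f(\omega)$ to be the unique point of their intersection. Any ray $(v_i)_{i\in\N}$ of $\omega$ is a \Cs\ (because $\sum_i\ell(v_iv_{i+1})<\infty$) with a tail in every $C_n(\omega)$, whence $v_i\to f(\omega)$ in $\ltp$; so $f$ is well defined, with no choices involved.

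Injectivity splits into two claims, the second being the main obstacle. (i) $f(\omega)\notin G$ for every end $\omega$: otherwise $f(\omega)=p\in G$, and any ray $(v_i)\in\omega$ would converge to $p$ in $\ltp$, hence in the $1$-complex $G$, which is impossible because pairwise distinct vertices cannot converge in the $1$-complex of a \lf\ graph. (ii) Distinct ends $\omega\ne\omega'$ have $f(\omega)\ne f(\omega')$: suppose $f(\omega)=f(\omega')=x$, fix rays $(v_i)\in\omega$ and $(w_i)\in\omega'$ (both converging to $x$) and, using $d_\ell(v_i,w_i)\to 0$, paths $Q_i$ in $G$ from $v_i$ to $w_i$ with $\ell(Q_i)\to 0$. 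Choose a finite $S\subseteq V(G)$ with $C(S,\omega)\ne C(S,\omega')$; for all large $i$ the two ends of $Q_i$ lie in these distinct components of $G-S$, so $Q_i$ passes through $S$. Since $S$ is finite, some $s\in S$ lies on $Q_i$ for infinitely many $i$, and for those $i$ we have $d_\ell(v_i,s)\le\ell(Q_i)\to 0$; hence a subsequence of $(v_i)$ converges to $s\in G$ while the whole sequence converges to $x$, forcing $x=s\in G$ --- contradicting (i). As $f$ is obviously injective on $G$, this gives injectivity.

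Continuity at a point of $G$ is immediate, $G$ being open in $\fcg$ with the matching subspace topology. At an end $\omega$ and $\eps>0$, I would pick $n$ large enough that $\operatorname{diam}_{d_\ell}(C_n(\omega))<\eps/2$ and, in addition, every edge meeting $C_n(\omega)$ has length $<\eps/2$ --- the latter being possible because \finl\ forces all but finitely many edges, hence all edges lying far enough out in $G$, to be arbitrarily short. Then $d_\ell(v,f(\omega))<\eps/2$ for all $v\in C_n(\omega)$ (as $f(\omega)\in\overline{C_n(\omega)}$); the same estimate holds for $f(\omega')$ whenever $\omega'$ is an end with a ray in $C_n(\omega)$; and for an inner point $z$ of an edge $sv$ with $s\in B_n$, $v\in C_n(\omega)$, one has $d_\ell(z,f(\omega))\le d_\ell(z,v)+d_\ell(v,f(\omega))<\eps$. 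Hence $f$ maps the basic open neighbourhood of $\omega$ in $\fcg$ --- namely $C_n(\omega)$ together with the ends having a ray in $C_n(\omega)$ and the interiors of the edges between $B_n$ and $C_n(\omega)$ --- into the $\eps$-ball around $f(\omega)$, so $f$ is continuous. Surjectivity then follows at once: $f(\fcg)$ is compact, hence closed in the Hausdorff space $\ltp$, and it contains the dense set $f(G)=G$, so $f(\fcg)=\ltp$. Thus $f$ is a continuous bijection from a compact space onto a Hausdorff one and is therefore a homeomorphism. Apart from claim (ii), the work is routine bookkeeping that leans on local finiteness (to identify the subspace topologies on $G$ and to keep the sets $B_n$ and the cuts $E(B_n,C_n(\omega))$ finite) and on \finl\ (to render the far-out parts of $G$ metrically negligible).
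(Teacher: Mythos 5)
Your proof is correct, and it takes a genuinely different route from the paper's. The paper deduces the statement from a more general result for countable graphs: it defines a metric $d$ directly on the point set of $\eti$ via lengths of rays and double rays, checks that $d$ induces the topology of $\eti$, proves completeness of the resulting space via the comb lemma (\Lr{comb}), and then identifies it with $\ltp$ up to isometry by appealing to the universal property \eqref{univ} of completions. You instead build an explicit map $\fcg\to\ltp$ --- the identity on $G$, with each end sent to the unique point in the intersection of the nested closed sets $\overline{C_n(\omega)}$ of vanishing diameter --- verify it is a continuous bijection, and invoke the compact-to-Hausdorff lemma (the paper's \Lr{injHom}, which it uses elsewhere) together with the known compactness of \fcg\ \CDB. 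Your argument is more elementary in that it avoids the universal property of completions (the paper explicitly remarks that such a direct argument is possible), and your injectivity step (ii), passing a short path through a finite separator to force the common limit into $G$, is a clean replacement for the paper's completeness argument; the price is that your proof is confined to the \lf\ case, since it needs \fcg\ compact and the metric and 1-complex topologies to agree on $G$, whereas the paper's route also covers countable \nlf\ graphs (Theorem~\ref{finl} for $\eti$) and yields compactness of $\eti$ as a corollary rather than consuming it as an input. For the theorem as stated, both hypotheses you use are legitimate standing facts, so there is no circularity.
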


Another special case of \ltop\ is the Floyd completion of a \lf\ graph, which in turn has as a special case the hyperbolic compactification of a hyperbolic graph in the sense of Gromov \cite{gromov}. Other special cases include the topologies $\etop$ and $\mtop$, which generalise the \FC\ to \nlf\ graphs, for all graphs for which these topologies are metrizable; see Sections~\ref{secDefs} and~\ref{secSpe} for definitions and the details.

\subsection{Infinite electrical networks} \label{inele}

Infinite electrical networks are a useful tool in mathematics, for example in the study of random walks \cite{LyonsBook}. An electrical network $N$ has an underlying graph \g and a function $r: E(G) \to \R_+$ assigning resistances to the edges of \G. If \g is finite, then the electrical current in $N$ ---between two fixed vertices $p,q$ and with fixed flow value $I$--- is the unique flow satisfying \ksl. If \g is infinite then there may be several flows satisfying \ksl, and one of the standard problems in the study of infinite electrical networks is to specify under what conditions such a flow is unique, see e.g.\ \cite{woessCurrents,thomInfNet}. 

In \cite{AgCurrents} we prove that if the sum of all resistances in a network $N$ is finite then there is a unique electrical current in $N$, provided we do not allow any flow to escape to infinity (see \cite{AgCurrents} for precise definitions):

\begin{theorem}[\cite{AgCurrents}] \label{finrI}
Let $N$ be an electrical network with $\sum_{e\in E(G)} r(e)<\infty$. Then there is a unique \cutr\ \flo{p}{q} with value $I$ and finite energy in $N$ that satisfies \ksl. 
\end{theorem}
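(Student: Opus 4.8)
The plan is to read \Tr{finrI} as a statement about flows on the compact space \ltp\ obtained by taking $\ell:=r$. First note that $\sum_{e\in E(G)}r(e)<\infty$ forces \ltp\ to be compact: for every $\eps>0$ only finitely many edges are longer than $\eps$, and after deleting their interiors the connected graph $G$ breaks into only finitely many pieces --- one meeting each endpoint of a long edge --- each of diameter below $\eps$; so \ltp\ is totally bounded, and being a completion it is complete, hence compact. Two consequences will drive everything. (a) Any finite-energy \flo{p}{q} $f$ satisfying \ksl\ carries a \emph{potential}: since the sum of $r(e)f(e)$ around every finite circuit is zero, integrating $r(e)f(e)$ along paths from a fixed base vertex yields a well-defined $\phi\colon V(G)\to\R$ with $\phi(y)-\phi(x)=r(e)f(e)$ for each oriented edge $e=xy$ (the reading of \ksl\ on the whole topological cycle space $\ccg$, which we will also use, is recovered from the finite case by \CDB). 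Moreover $\phi$ extends continuously to \ltp: if $v_n,v_m$ lie on a path $P$ then
\[ |\phi(v_n)-\phi(v_m)|\le\sum_{e\in P}r(e)|f(e)|\le\bigl(\textstyle\sum_{e\in P}r(e)\bigr)^{1/2}\bigl(\textstyle\sum_{e\in P}r(e)f(e)^2\bigr)^{1/2}\le\sqrt{\ell(P)}\,\|f\|, \]
so $\phi$ is $d_\ell$-Cauchy along every Cauchy sequence of vertices; being continuous on the \emph{compact} space \ltp, $\phi$ is bounded. (b) \cutr ness is precisely the assertion that $f$ does not leak across the boundary $\blg$; concretely, there is a finite connected exhaustion $G_1\subseteq G_2\subseteq\cdots$ of $G$ with $\liminf_n\sum_{e\in\delta G_n}|f(e)|=0$, where $\delta G_n$ is the edge cut between $G_n$ and the rest. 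That some such hypothesis is unavoidable is already visible on the two-way-infinite path with fast-decaying resistances: there the all-ones ``flow from $-\infty$ to $+\infty$'' is a nonzero finite-energy circulation satisfying \ksl\ vacuously.

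For existence, exhaust $G$ by finite connected subnetworks $N_n$ with $p,q\in N_1$, and let $f_n$ be the unique current of value $I$ between $p$ and $q$ in $N_n$. By Rayleigh monotonicity the effective resistance between $p$ and $q$ in $N_n$ is at most that of a single $p$--$q$ path, hence at most $R:=\sum_{e}r(e)$, so each $f_n$ has energy at most $RI^2$. Passing to a subsequence that converges edge by edge --- a diagonal argument, or the path-limit lemma of \cite{hp} applied to a flow-path decomposition --- gives a limit flow $f$, and it remains to verify: that $f$ is a \flo{p}{q} of value $I$ (the node law is preserved under edgewise limits); that $\|f\|^2\le\liminf_n\|f_n\|^2\le RI^2$ by Fatou; that $f$ satisfies \ksl, since it does on every finite circuit and hence, by \CDB, on all of $\ccg$; and that $f$ is \cutr. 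This last point is the substantive one, and is where compactness of \ltp\ is used: because all but finitely many edges carry arbitrarily small resistance, the flow of $f$ across any sufficiently far-out cut is arbitrarily small, so $f$ cannot escape to infinity.

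For uniqueness, let $f_1,f_2$ be two \cutr\ \flos{p}{q} of value $I$ with finite energy satisfying \ksl, and set $f:=f_1-f_2$. Then $f$ is a finite-energy circulation --- its value is $0$, so the node law holds at \emph{every} vertex, including $p$ and $q$ --- it satisfies \ksl, and it is \cutr. Let $\phi$ be its potential, bounded by some $M<\infty$ as in (a), take the exhaustion $(G_n)$ from (b), and let $\partial V_n$ be the vertices of $G_n$ incident to $\delta G_n$. Summing $r(e)f(e)^2=f(e)\bigl(\phi(y)-\phi(x)\bigr)$ over the finitely many edges of $G_n$ and regrouping by vertices, the vertices not incident to $\delta G_n$ contribute nothing (zero divergence), leaving
\[ \sum_{e\in E(G_n)} r(e)f(e)^2 \;=\; \sum_{v\in\partial V_n}\phi(v)\,c_v^{(n)}, \]
where $c_v^{(n)}$ is the flow of $f$ from $v$ across $\delta G_n$; hence $\sum_{e\in E(G_n)}r(e)f(e)^2\le M\sum_{e\in\delta G_n}|f(e)|$. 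The left-hand side increases to $\|f\|^2$ as $n\to\infty$, while the right-hand side has liminf $0$ by the choice of exhaustion, so $\|f\|^2=0$ and $f_1=f_2$. (Conceptually this says that inside the Hilbert space of finite-energy flows the \cutr\ finite-energy circulations are exactly the closure of the cycle space, so a \cutr\ finite-energy circulation orthogonal --- via \ksl\ --- to every circuit must vanish; existence and uniqueness together then amount to \emph{Dirichlet's principle} applied to the closed convex set of \cutr\ \flos{p}{q} of value $I$, the energy-minimiser being the unique such flow obeying \ksl.)

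The crux throughout is the boundary bookkeeping --- making \cutr ness precise and extracting exactly what the two halves need from it: on the existence side, that the edgewise limit of finite currents genuinely does not leak to infinity; on the uniqueness side, that $\liminf_n\sum_{e\in\delta G_n}|f(e)|=0$ for a suitable exhaustion and that the potential $\phi$ is bounded. These are the only points where the hypothesis $\sum_e r(e)<\infty$ is indispensable, since it is what makes \ltp\ compact and $\blg$ negligible for finite-energy purposes, and pinning down the quantitative versions --- a uniform handle on the flux across a well-chosen exhaustion in particular --- is the part I expect to require genuine care rather than routine estimates.
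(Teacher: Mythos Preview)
This theorem is not proved in the present paper: it is quoted from \cite{AgCurrents}, and the paper explicitly says ``see \cite{AgCurrents} for precise definitions'' of \cutr\ flows. All the paper tells us about the proof is that it uses \Tr{finlf}, the basic facts of \Sr{basFacts}, and ``a result saying that, unless for some obvious obstructions, every flow satisfying \ksl\ for finite cycles in an infinite network also satisfies \ksl\ for infinite, topological circles in the space \ltp\ with $\ell:=r$.'' So there is no proof here to compare your proposal against.

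That said, two remarks on your proposal relative to this sketch. First, you have had to \emph{guess} what \cutr\ means, and you guessed a specific exhaustion condition ($\liminf_n\sum_{e\in\delta G_n}|f(e)|=0$); the paper does not supply this definition, and the correctness of both halves of your argument hinges on it matching whatever \cite{AgCurrents} actually says. Second, the one substantive ingredient the paper singles out --- that \ksl\ on finite cycles propagates to \ksl\ on infinite topological circles in \ltp\ --- plays no visible role in your argument: your uniqueness proof is a classical summation-by-parts with a bounded potential on an exhaustion, and your existence proof is a diagonal limit of finite currents. This suggests either that your route is genuinely different from that of \cite{AgCurrents}, or that the topological-circle version of \ksl\ is doing work you have absorbed into your exhaustion hypothesis without noticing. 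Either way, the comparison cannot be settled from this paper alone.
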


The proof of \Tr{finrI} uses \Tr{finlf} and other basic facts about \ltop\ proved here (\Sr{basFacts}), as well as a result saying that, unless for some obvious obstructions, every flow satisfying \ksl\ for finite cycles in an infinite network also satisfies \ksl\ for infinite, topological circles in the space \ltp\ with $\ell:=r$.

\subsection{The cycle space of an infinite graph and the homology of a continuum} \label{insuccc}

The cycle space of a finite graph $G$ is the first simplicial homology group of $G$. This is a well studied object, and many useful results are known \cite{diestelBook05}. For infinite graphs many of these results fail even in the \lf\ case, however, \DK\ \cite{cyclesI,cyclesII} proposed a new homology for an infinite graph \G, called the \tcs\ \ccg, that makes those results true also for \lf\ graphs; an exposition of such results can be found in \cite{RDsBanffSurvey} or \cite[Chapter~8.5]{diestelBook05}. The main innovation of the approach of \DK\ was to consider topological circles in the \FC\ \fcg\ of the graph, and use those circles as the building blocks of their cycle space \ccg. 

It is natural to wonder how this homology theory interacts with \ltp: given an arbitrary function $\ell$ we can consider the topological circles in \ltp\ instead of those in \fcg, and we may ask if the former circles can be the building blocks for a homology that retains the desired properties of \ccg. 

The attempts to answer the latter question led to a new homology, introduced\sss\ in \cite{lhom}, that can be defined for an arbitrary metric space $X$ and has indeed important similarities to \ccg, at least if $X$ is compact. This homology is described in \Sr{sechom}, where we will also see an important example that motivates it.

\subsection{Geodetic circles} \label{ingeo}

As mentioned above, \fcg\ and the \tcs\ has led to generalisations of most well-known theorems about the cycle space of finite graphs to \lf\ ones, however, there are cases where \fcg\ performs poorly: namely, problems in which a notion of length is inherent. We will see one such case here; another can be found in \cite{AgCurrents}.

Let \g be a finite graph with edge lengths \lER. A cycle $C$ in $G$ is called \emph{$\ell$-geodetic} if, for any two vertices $x,y\in C$, the length of at least one of the two $x$--$y$~arcs on $C$ equals the distance between $x$ and $y$ in $G$, where lengths and distances are considered taking edge lengths into account. It is easy to show (see \cite{geo}) that:

\begin{theorem}\label{fingeo}
The cycle space of a finite graph \g is generated by its $\ell$-geodetic cycles.
\end{theorem}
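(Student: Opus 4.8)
The plan is to show that every cycle $C$ of $G$ can be written as a sum (in $\mathcal C(G)$, i.e.\ symmetric difference of edge sets) of $\ell$-geodetic cycles, arguing by induction on the total length $\ell(C) := \sum_{e \in E(C)} \ell(e)$. Since $G$ is finite there are only finitely many possible values of $\ell(C)$, so a well-ordering argument on these lengths is legitimate; alternatively one can induct on the number of edges of $C$, or on the sum of the distances realised along $C$, whichever turns out to be cleanest. The base case is any cycle that is already $\ell$-geodetic (in particular a triangle, or any shortest cycle through a given edge), which is trivially a sum of $\ell$-geodetic cycles.

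For the inductive step, suppose $C$ is not $\ell$-geodetic. Then there exist vertices $x,y \in C$ such that \emph{both} $x$--$y$ arcs $P_1, P_2$ on $C$ are strictly longer than $d_\ell(x,y)$. Let $Q$ be a geodesic (shortest) $x$--$y$ path in $G$, so $\ell(Q) = d_\ell(x,y) < \ell(P_i)$ for $i=1,2$. The key idea is to use $Q$ as a "shortcut" splitting $C$ into two shorter closed walks: form the closed walk $W_1 := P_1 \cup Q$ and $W_2 := P_2 \cup Q$. In the cycle space we have $C = W_1 + W_2$, because the edges of $Q$ common to both cancel. Each $W_i$ has length $\ell(P_i) + \ell(Q) < \ell(P_i) + \ell(P_{3-i}) = \ell(C)$ provided $\ell(Q) < \ell(P_{3-i})$, which holds since $C$ is not $\ell$-geodetic at $\{x,y\}$; so both $W_1$ and $W_2$ are strictly shorter than $C$. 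A closed walk is an element of $\mathcal C(G)$, hence by \Lr{}-style cycle-decomposition (every element of the cycle space is an edge-disjoint union of cycles) each $W_i$ is a sum of cycles, each of which has length at most $\ell(W_i) < \ell(C)$; by the induction hypothesis each such cycle is a sum of $\ell$-geodetic cycles, and therefore so is $C = W_1 + W_2$.

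A technical point to handle carefully: $W_i = P_i \cup Q$ need not be a simple cycle — $Q$ may share vertices (or even edges) with $P_i$ beyond $x$ and $y$. This is why I pass to closed walks / cycle-space elements and invoke the decomposition of a cycle-space element into edge-disjoint cycles, rather than pretending $W_i$ is itself a cycle. When edges are shared between $Q$ and $P_i$ they simply cancel in the symmetric difference, which only decreases the edge set and the length, so the length bound is preserved (indeed improved). One must also make sure the strict inequality $\ell(W_i) < \ell(C)$ is genuinely strict so the induction terminates; this is guaranteed precisely by the failure of the $\ell$-geodetic condition, which gives $\ell(Q) < \min\{\ell(P_1), \ell(P_2)\}$.

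The main obstacle I anticipate is not conceptual but bookkeeping: ensuring the induction parameter strictly decreases in every branch even after cancellation of shared edges, and phrasing the decomposition so that "sum of $\ell$-geodetic cycles" is manipulated cleanly inside the cycle space (a $\mathbb Z_2$-vector space for finite $G$). If one worries that a cycle obtained from decomposing $W_i$ could fail to be shorter than $C$ when $W_i$ has repeated edges that do \emph{not} cancel (impossible over $\mathbb Z_2$, but a concern if one insists on walks), the fix is to work with edge sets and symmetric difference throughout, never with walks as such. With that convention the argument is a routine strong induction; Theorem~\ref{fingeo} follows.
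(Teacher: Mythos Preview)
Your argument is correct and is the standard proof of this fact: split a non-geodetic cycle along a geodesic between a ``bad'' pair $x,y$ and apply strong induction on length. The paper does not actually prove \Tr{fingeo}; it simply states the result as ``easy to show'' and cites \cite{geo}, so there is nothing to compare against beyond noting that your approach is exactly the expected one.
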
 

It was shown in \cite{geo} that this theorem generalises to \lf\ graphs using the \tcs, but only if the edge lengths respect the topology of \fcg, where  respecting the topology of \fcg\ means something slightly more general than  $\ltp$ being homeomorphic to $\fcg$. 

With \ltp\ we might be able to drop this restriction on $\ell$: we may ask whether for every \lER\ the \defi{$\ell$-geodetic} (topological) circles in \ltp\ ---defined similarly to finite $\ell$-geodetic cycles--- \defi{generate}, in a sense, all other circles; more precisely, we conjecture that they generate the homology group alluded to in \Sr{insuccc}. See \Sr{secgeo} for more. 


\subsection{Line graphs} \label{introLg}

The line graph $L(G)$ of a graph $G$ is defined to be the graph whose vertex set is the edge set of $G$ and in which two vertices are adjacent if they are incident as edges of $G$.

It is a well known fact that if a finite graph $G$ is eulerian then $L(G)$ is hamiltonian. This fact was generalised for locally finite graphs in \cite[Section~10]{fleisch}, where \et s and \fhcy s are defined topologically: a \defi{\hcy} is a homeomorphic image of $S^1$ in $\fcg$ containing all vertices, and a \defi{topological \et} is a continuous mapping from $S^1$ to $\fcg$ that traverses each edge precisely once.  

We would like to generalise this fact to \nlf\ graphs. In order to define \tet s and \hcy s as above, we first have to specify some topology for those graphs. Interestingly, it turns out that we can obtain an elegant generalisation to \nlfg s, but only if different topologies are used for $G$ and $L(G)$:

\begin{theorem}\label{Lg}
If $G$ is a countable graph and $\eti$ has a \tet\ then $\mtopx{L(G)}$ has a \hcy.
\end{theorem}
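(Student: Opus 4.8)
The plan is to read off the \hcy\ of $\mtopx{L(G)}$ directly from a given \tet\ $\sigma\colon S^1 \to \eti$, in the spirit of the finite case: there, if $e_1e_2\dotsm e_me_1$ is an \et\ of $G$ then, because consecutive edges share a vertex and each edge occurs exactly once, this same cyclic sequence is a \fhcy\ of $L(G)$. All the difficulty is in transporting this through the metric completions, and in particular across their boundaries---which is exactly why the theorem pairs $ETOP$ for $G$ with $MTOP$ for $L(G)$. In $ETOP$ the edges at a vertex $v$ of infinite degree converge not to $v$ but to a separate ``edge-end'', whereas in $MTOP$ the infinite clique $K_v\subseteq L(G)$ spanned by those edges collapses to a single point of the completion; the proof will hinge on identifying the boundary $\eti\setminus G$ with the boundary $\mtopx{L(G)}\setminus L(G)$ via these matching phenomena.

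First I would extract the combinatorial skeleton of $\sigma$. For $e\in E(G)$ the set $I_e:=\sigma^{-1}(\mathring e)$ is an open arc of $S^1$; the $I_e$ are pairwise disjoint and, since $\sigma$ traverses every edge, $\bigcup_e I_e$ is dense, so $Z:=S^1\setminus\bigcup_e I_e$ is closed with $\sigma(Z)\subseteq V(G)\cup(\eti\setminus G)$. Since $V(G)\cup(\eti\setminus G)$ is totally disconnected in $\eti$ (each vertex is isolated in it, and the space of ends and edge-ends is totally disconnected), $\sigma$ is constant on every component of $Z$. Moreover, writing $\overline{I_e}=[a_e,b_e]$, the traversal of $e$ ends at a vertex $\sigma(b_e)=v$, and---by the structure of $ETOP$---no sequence of distinct edge-arcs can accumulate at a point of $\sigma^{-1}(V(G))$; hence the component $K$ of $Z$ with $b_e\in K$ has an edge-arc $I_f$ immediately to its right, and $e,f$ share the vertex $\sigma(K)$, so $\{e,f\}\in E(L(G))$. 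Thus each edge of $G$ has a well-defined successor and predecessor in the tour, and $\sigma$ presents $E(G)$ in a cyclic order in which consecutive edges are adjacent in $L(G)$.

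Next I would build an injective continuous map $\tau\colon S^1\to\mtopx{L(G)}$ whose image contains every vertex of $L(G)$. Fix the midpoint $m_e$ of each $I_e$. If $f$ is the successor of $e$ and $K$ is the component of $Z$ between $I_e$ and $I_f$, then the arc of $S^1$ from $m_e$ to $m_f$ equals $[m_e,b_e]\cup K\cup[a_f,m_f]$, and we let $\tau$ map it homeomorphically onto the edge $\{e,f\}$ of $L(G)$ with $m_e\mapsto e$ and $m_f\mapsto f$; distinct consecutive pairs of edges yield distinct edges of $L(G)$, so these partial maps are mutually compatible and jointly injective, and together they cover all of $S^1$ except the closed set $\sigma^{-1}(\eti\setminus G)$. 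On that remaining set we let $\tau$ be the composition of $\sigma$ with the boundary identification $\eti\setminus G\to\mtopx{L(G)}\setminus L(G)$ mentioned above. If that identification is a homeomorphism respecting the way edges approach boundary points on each side, then $\tau$ is well defined, continuous and injective. Since a continuous injection from a compact space into a Hausdorff space is a homeomorphism onto its image, $\tau(S^1)$ is homeomorphic to $S^1$, and it contains $\tau(m_e)$, i.e.\ the vertex $e$ of $L(G)$, for every $e\in E(G)$. Hence $\tau(S^1)$ is a \hcy\ of $\mtopx{L(G)}$.

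The main obstacle is exactly the input I assumed twice above: the structural facts about topological \ets\ in $ETOP$ (in particular, that no edge-arcs accumulate at a vertex, so that successors and predecessors are well defined) and the identification of $\eti\setminus G$ with $\mtopx{L(G)}\setminus L(G)$ compatibly with the edge structure on both sides. Concretely one must show that a sequence of edges of $G$ that converges in $\eti$ induces a sequence of vertices of $L(G)$ that converges in $\mtopx{L(G)}$, that convergence to distinct boundary points is preserved, and conversely; checking that $\tau$ is then continuous and injective along $\sigma^{-1}(\eti\setminus G)$ is the heart of the argument. This is the only place where the countability hypothesis is used---it guarantees that $\mtopx{L(G)}$ is metrizable and lets the boundaries be analysed sequentially---and the only place where the asymmetry between the two topologies is essential; everything else is the bookkeeping of the ``midpoint'' reparametrisation.
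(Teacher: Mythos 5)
There is a genuine gap, and it sits exactly where you flagged it: you try to read the \hcy\ off the \emph{given} \tet\ $\sigma$ of $\eti$, but an arbitrary \tet\ does not have the two structural properties your construction needs. First, the claim that ``no sequence of distinct edge-arcs can accumulate at a point of $\sigma^{-1}(V(G))$'' is false when $G$ is \nlf: a basic open neighbourhood of a vertex $v$ of infinite degree in $\eti$ is a whole component of $G-S$ for a finite edge set $S$, so it contains cofinitely many edges at $v$ (and typically many edges not at $v$); hence the preimages of the infinitely many edges at $v$ must accumulate at points mapped to $v$, and the tour may arrive at $v$ along an edge $e$ and then continue via an infinite sequence of edge-arcs clustering at $v$ from the other side, none of which is incident with $v$. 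In that case $e$ has no successor in your cyclic order, so the arc from $m_e$ onwards cannot be mapped into an edge of $L(G)$ --- and since every neighbourhood of the vertex $e$ in $\mtopx{L(G)}$ is an open star, a \hcy\ genuinely must leave $e$ along an edge of $L(G)$, so this is not a removable bookkeeping issue. Second, an arbitrary \tet\ can visit the same edge-end of $G$ many times, so your $\tau$ would fail to be injective on $\sigma^{-1}(\eti\setminus G)$. Finally, the ``boundary identification'' is not a homeomorphism $\eti\setminus G\to\mtopx{L(G)}\setminus L(G)$: the infinite cliques $K_v$ contribute boundary points $\pi'(v)$ of $\mtopx{L(G)}$ that correspond to \emph{interior} points (infinite-degree vertices) of $\eti$, not to its boundary.

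The paper's route is different precisely because of this: it uses the hypothesis only to conclude, via \Lr{etoc}, that $G$ has no finite odd cut, then discards the given tour and \emph{constructs} a new \tet\ in $\Getop$ (\Lr{etoconv}) with exactly the properties your argument presupposes --- every entry into an infinite-degree vertex is matched by an exit along an edge (property~\eqref{club}, arranged by always inserting at least $4$ edges at such a vertex), and distinct accumulation points of vertex-preimages are finitely separable, which yields injectivity at the boundary. That construction is the bulk of the proof. Your final translation step (midpoint reparametrisation, $\pi$ and $\pi'$, and \Lr{injHom} to upgrade a continuous injection to a homeomorphism) matches the paper's, but the two inputs you ``assumed twice'' are not facts about all \ets\ in $\eti$; they must be engineered, and doing so is the theorem's real content.
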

(See \Sr{defEnds} for the definition of $ETOP$ and \Sr{secNlf} for the definition of $MTOP$.) This phenomenon can however be explained using \ltop. If an assignment of edge lengths \lER\ is given for a graph $G$, then it naturally induces an assignment of edge lengths $\lL$ to the edges of $L(G)$: let $\lL(f)=1/2(\ell(e)+\ell(d))$ for every edge $f$ of $L(G)$ joining the edges $e,d$ of $E(G)$ (to see the motivation behind the definition of $\lL$, think of the vertices of $L(G)$ as being the midpoints of the edges of $G$). In \Sr{secSpe} we are going to show that if \finl, in which case $\ltp$ is homeomorphic to $\eti$, then $\ltopxl{\lL}{L(G)}$ is homeomorphic to $ \mtopx{L(G)}$, see \Cr{corLg}. It could be interesting to try to generalise \Tr{Lg} for other assignments $\ell$. 

We are going to prove \Tr{Lg} in \Sr{secLg}.

\section{Definitions and basic facts} \label{secDefs}

Unless otherwise stated, we will be using the terminology of Diestel \cite{diestelBook05} for graph theoretical terms and the terminology of \cite{armstrong} and \cite{Hatcher} for topological ones.

\subsection{Metric spaces} \label{secDefsM}

In this subsection we recall some well-known facts for the convenience of the reader. 
A function $f$ from a metric space $(X,d_x)$ to a metric space $(Y,d_y)$  is \defi{uniformly continuous} if for every $\eps>0$ there is a $\del>0$, such that for every $x,y \in X$ with $d_x(x,y)<\delta$ we have $d_y(f(x),f(y))<\eps$.
The following lemma follows easily from the definitions.

\begin{lemma} \label{uncon} 
Let $f:M \to N$, where $M,N$ are metric spaces, be a uniformly continuous function. If $(x_i)$ is a Cauchy sequence in $M$ then $(f(x_i))$ is a Cauchy sequence in $N$.
\end{lemma}

For every metric space $M$, it is possible to construct a complete metric space $M'$, called the \defi{completion} of $M$, which contains $M$ as a dense subspace. The completion  $M'$ of $M$ has the following universal property \cite{MiVoFun}:

\begin{equation} 
\label{univ} 
\begin{minipage}[c]{0.85\textwidth} 
If $N$ is a complete metric space and $f: M \to N$ is a uniformly continuous function, then there exists a unique uniformly continuous function $f': M' \to N$ which extends $f$. The space $M'$ is determined up to isometry by this property (and the fact that it is complete).
\end{minipage}\ignorespacesafterend 
\end{equation}

\comment{
	The following observation follows easily from the definition of equivalence of \Cs s.

	\begin{observation} \label{eqCau}
	For any metric space $M$, two \Cs s \seq{x}, \seq{y} in $M$ are equivalent if and only if there is a third \Cs\ in $M$ having infinitely many common elements with both $x_i$ and $y_i$.
	\end{observation}
}

A \defi{continuum} is a non-empty, compact, connected metric space.

\subsection{Topological paths, circles, etc.} \label{defsTop}

A \defi{circle} in a topological space $X$ is a homeomorphic copy of the unit circle $S^1$ of $\R^2$ in $X$.  An \defi{arc} $R$ in 
$X$ is a homeomorphic image of the real interval $[0, 1]$ in $X$. Its \defi{endpoints} are the images of $0$ and $1$ under any homeomorphism from \unin\ to $R$. If $x,y\in R$ then $xRy$ denotes the subarc of $R$ with endpoints $x,y$. A \defi{topological path} in $X$ is a continuous map from a closed real interval to  $X$. 

A \defi{\clex} is a singular 1-simplex traversing a circle $C$ once and in a straight manner; in other words, a continuous mapping $\sigma:\unin \to C$ that is injective on $(0,1)$ and satisfies $\sigma(0)=\sigma(1)$.

Let $\sigma: [a,b]\to X$ be a topological path in a metric space $(X,d)$. For a finite sequence $S=s_1, s_2, \ldots, s_k$ of points in $[a,b]$, let $\ell(S):= \sum_{1\leq i< k} d(\sigma(s_i),\sigma(s_{i+1}))$, and define the \defi{length} of $\sigma$ to be $\ell(\sigma):=\sup_S \ell(S)$, where the supremum is taken over 
all finite sequences $S=s_1, s_2, \ldots, s_k$ with $a=s_1<s_2< \ldots <s_k=b$. If $C$ is an arc or a circle in $(X,d)$, then we define its length $\ell(C)$ to be the length of a surjective topological path $\sigma:[0,1] \to C$ that is injective on $(0,1)$; it is easy to see that $\ell(C)$ does not depend on the choice of \sig.

\afsubm{removed now redundant Lemmas about eq. def of length}
\comment{
Call a mapping from a real interval $I$ to a metric space  $(X,d)$ \defi{\njp}, if for every $x,y\in I$ there holds $d(f(x),f(y)) \leq |x-y|$.

\begin{lemma}\label{lengthArc}
Let $C$ be an arc or circle in a metric space $(X,d)$. Then $\ell(C)$ equals the minimum $r\in \R_+\cup \{\infty\}$ such that there is a surjective continuous \njp\ function $\tau:[0,r]\to C$ that is injective on $(0,r)$; in particular, this minimum exists.
\end{lemma} 
\begin{proof}
Let $m$ be the infimum of the $r\in \R^+\cup \{\infty\}$ such that there is a surjective, continuous, \njp\ function $\tau:[0,r]\to C$ that is injective on $(0,r)$. 
Firstly, we claim that $m\geq \ell(C)$; indeed, for every such function $\tau$ we have $\ell(C)=\ell(\tau)$ by definition, but for any finite sequence $0=s_1<s_2< \ldots <s_k=r$ we have 
$$r = \sum_{1\leq i< k} (s_{i+1}-s_i) \geq \sum_{1\leq i< k} d(\sigma(s_i),\sigma(s_{i+1}))$$
as \sig\ is \njp.

If $\ell(C)=\infty$ then there is nothing more to show, so suppose $\ell(C)<\infty$. We will only consider the case that $C$ is an arc; the case that $C$ is a circle is similar. Let $\sigma:[0,1] \to C$ be a homeomorphism, and define the mapping $\rho: C \to [0,\ell(C)]$ by $x \mapsto \ell(\sigma\restr [0,\sigma^{-1}(x)])$. It is straightforward to check that the inverse function $\rho^{-1}: [0,\ell(C)] \to C$ is well-defined, continuous, and \njp. Thus $m\leq \ell(C)$, and since we have already seen that $m\geq \ell(C)$ the proof is complete.


\end{proof}

\begin{corollary}\label{lengthPath}
Let $\sigma: [0,1]\to X$ be a topological path in a metric space $(X,d)$. Then $\ell(\sigma)$ is the minimum $r\in \R^+\cup \{\infty\}$ such that there is a bijection $g:[0,r]\to [0,1]$ such that the function $\sigma \circ g$ is \njp.
\end{corollary} 
}

We are going to need the following well-known facts. 

\LemArc{arc}

\LemInjHom{injHom}

\subsection{\ltop} \label{defsLtop}

Every graph \g in this paper is considered to be a 1-compex, which means that the edges of $G$ are homeomorphic copies of the real unit interval. A \defi{half-edge} of a graph $G$ is a connected subset of an edge of $G$.

Fix a graph $G$ and a function \lER, and for each edge $e\in E(G)$ fix an isomorphism $\sigma_e$ from $e$ to the real interval $[0,\ell(e)]$; by means of $\sigma_e$ any half-edge $f$ with endpoints $a,b$ obtains a length $\ell(f)$, namely $\ell(f):=|\sigma_e(a)-\sigma_e(b)|$. 

We can use $\ell$ to define a distance function on $G$: for any $x,y \in V(G)$ let $d_\ell(x,y)= \inf \{\ell(P) \mid P \text{ is an \pth{x}{y}}\} $, where $\ell(P):= \sum_{e \in E(P)} \ell(e)$. For points $x,y \in G$ that might lie in the interior of an edge we define $d_\ell(x,y)$ similarly, but instead of graph-theoretical paths we consider arcs in the 1-complex $G$: let 
$d_\ell(x,y)= \inf \{\ell(P) \mid P \text{ is an \arc{x}{y}}\}$, where $\ell(P)$ is now the sum of the lengths of the edges and maximal half-edges in $P$; note that this sum equals the length of $P$ as defined in \Sr{defsTop} (for metric spaces in general).
By identifying any two vertices $x,x'$ of $G$ for which $d_\ell(x,x')=0$ holds we obtain a metric space $(\gid{G}, d_\ell)$. Note that if \g is \lf\ then $\gid{G}=G$. Let $\ltp$ be the completion of $(\gid{G}, d_\ell)$.

The \defi{\bp s} of $G$ are the elements of the set $\blg:=\ltp \sm \pi(\gid{G})$, where $\pi$ is the canonical embedding of $\gid{G}$ in its completion \ltp.

For a subspace $X$ of $\ltp$ we write $E(X)$ for the set of edges contained in $X$, and we write $\kreis{E}(X)$ for the set of maximal half-edges contained in $X$; note that $\kreis{E}(X) \supseteq E(X)$. Similarly, for a topological path $\tau$ we write $E(\tau)$ for the set of edges contained in the image of $\tau$.

In this paper we will often encounter special cases of \ltp\ that induce some other well known topology on some space $G'$ containing $G$, e.g.\ the \FC\ of $G$. In order to be able to formally state the fact that the two topologies are the same, we introduce the following notation. Let $X,X'$ be  topological spaces that contain another topological space $G$. We will write $X \eqT_G X'$, or simply $X \eqT X'$ if $G$ is fixed, if the identity on $G$ extends to a homeomorphism between $G'$ and $G''$.

If \g is \lf\ then we could also define \ltp\ by the following definition, which is equivalent to the above as the interested reader will be able to check (using, perhaps, \Lr{lcomb} below). Let $R, L$ be two rays ---see \Sr{defEnds} below for the definition of a ray--- of finite total length in \G; we say that $R$ and $L$ are \defi{equivalent}, if there is a third ray of finite total length that meets both $R$ and $L$ infinitely often. Now define $\partial G$ to be the set of equivalence classes of rays  of finite total length in \G\ \wrt\ this equivalence relation, and let $\ltp:= G \cup \partial G$; to extend the metric $d_\ell$ from $G$ to all of \ltp, let the distance $d_\ell(x,y)$ between two points $x,y$ in $\partial G$ be the infimum of the lengths of all double rays with one ray in $x$ and one ray in $y$, and let $d_\ell(x,v)$ for $x\in  \partial G$ and $v\in G$ be the infimum of the lengths of all rays in $x$ starting at the point $v$.


\subsection{Ends, the \FC\ and the \tcs} \label{defEnds}

Let \g be a graph fixed thgoughout this section.

A $1$-way infinite path is
called a \defi{ray}, a $2$-way infinite path is
a \defi{double ray}. A \defi{tail} of the ray $R$ is a final subpath of $R$. 
Two rays $R,L$ in $G$ are \defi{vertex-equivalent} if no finite set of vertices
separates them; we denote this fact by $R\approx_G L$, or simply by $R\approx L$ if $G$ is fixed. The corresponding equivalence
classes of rays are the \defi{\vend\s} of $G$. We
denote the set of \vend\s\ of $G$ by \defi{$\Omega =
\Omega(G)$}. A ray belonging to the \vend\ \oo\ is an \defi{\oo-ray}. Similarly, two rays are \defi{edge-equivalent} if no finite set of edges separates them and we call the corresponding equivalence classes the \defi{\eend\s} of \G\ and let \defi{$\EOO(G)$} denote the set of \eend\s\ of \G. In a \lfg\ \g any two rays are edge-equivalent \iff\ they are vertex-equivalent, and we will simply call the  corresponding equivalence classes the \defi{ends} of \g.

We now endow the space consisting of \G, considered as a 1-complex, and its edge-ends with the topology \eti. Firstly, every edge $e\in E(G)$ inherits the open sets corresponding to open sets of $[0, 1]$. Moreover, for every finite edge-set $S \subset E(G)$, 
we declare all sets of the form 
\begin{equation}
C \cup \EOO(C) \cup
E'(C)
\label{eq}
\end{equation}
to be open, where $C$ is any component of $G - S$ and $\EOO(C)$ denotes the set of
all edge-ends of $G$ having a ray in $C$ and $E'(C)$ is any union of
half-edges $(z, y]$, one for every edge
$e = xy$ in $S$ with $y$ lying in $C$. Let \Getop\ denote the topological space of $G \cup
\EOO$ endowed with the topology generated by
the above open sets. Moreover, let \defi{\eti} denote the space obtained from \Getop\ by identifying any two points that have the same open neighbourhoods. If a point $x$ of \eti\ resulted from the identification of a vertex with some other points (possibly also vertices), then we will, with a slight abuse, still call $x$ a \defi{vertex}. It is easy to see that two vertices $v,w$ of \g are identified in \eti\ \iff\ there are \inm\ edge-disjoint \pths{v}{w}. 

If \g is \lf\ then $\Getop$ and \eti\ coincide, and it can be proved (see \cite{ends}) that \eti\ is the \defi{Freudenthal compactification}
\cite{Freudenthal31} of the 1-complex $G$ in that case. It is common in recent literature to denote this space by \defi{\fcg} if \g is \lf\footnote{\fcg\ is typically defined by considering vertex separators instead of edge separators and otherwise imitating our definition of \Getop\ (see e.g.\ \cite[Section~8.5]{diestelBook05}) but for a \lfg\ this does not make any difference.}, and we will comply with that convention.

The study of \fcg, in particular of topological circles therein, has been a very active field recently. It has been demonstrated by the work of several authors (\cite{locFinTutte,partition,bicycle,LocFinMacLane,degree,cyclesI,cyclesII,hp,fleisch,geo,hcs}) that many well known results about paths and cycles in finite graphs can be generalised to \lf\ ones if the classical concepts of path and cycle are interpreted topologically, i.e.\ replaced by the concepts of a (topological) arc and circle in \fcg; see \CDB\ or \cite{RDsBanffSurvey} for an exposition. An example of such a \tocir\ is formed by a double ray both rays of which converge to the same end, together with that end. There can however be much more exciting circles in \fcg: in \fig{wild}, the infinitely many thick double rays together with the continuum many ends of the graph combine to form a single \tocir\ $W$, the so-called \defi{wild} circle, discovered by \DK\ \cite{cyclesI}. The double rays are arranged within $W$ like the rational numbers within the reals: between any two there is a third one; see \cite{cyclesI} for a more precise description of $W$.

\showFig{wild}{The `wild' circle, formed by infinitely many (thick) double rays and con\-tinuum many ends. In this drawing it bounds the outer face of the graph.}

We finish this section with a basic fact about infinite graphs that we will use later. A \defi{comb} in \g is the union of a ray $R$ (called the \defi{spine} of the comb) with infinitely many disjoint finite paths having precisely their first vertex on $R$. A \defi{subdivided star} is the union of a (possibly infinite) set of finite paths that have precisely one vertex in common.

\LemCombStar{comb}


\section{Special cases of \ltop} \label{secSpe}

\subsection{The \FC\ and \eti} \label{secEtop}

We start this section by proving \Tr{finlf}, which states that the \FC\ of a \lfg\ is a special case of \ltp. Since for a \lfg\ \g the \FC\ coincides with the topology $\eti$, \Tr{finlf} is a corollary of the following.

\begin{theorem} \label{finl}
If $G$ is countable and $\sum_{e \in E(G)} l(e) < \infty$ then $\ltp \eqT \eti$.
\end{theorem}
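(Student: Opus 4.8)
The plan is to construct a continuous bijection $\psi\colon\ltp\to\eti$ that is the identity on $G$ and then to apply \Lr{injHom}; for this we need that $\ltp$ is compact and that $\eti$ is Hausdorff. We may assume $G$ is connected, the general case following by taking disjoint unions over the components. Since $\ltp$ is complete, to see that it is compact it suffices to check that $(\idG,d_\ell)$ is totally bounded. Given $\eps>0$, choose a finite $S\subseteq E(G)$ with $\sum_{e\notin S}\ell(e)<\eps$; since $G$ is connected, every component of $G-S$ but a fixed one sends an edge of $S$ out of it, so $G-S$ has at most $2|S|+1$ components, and each component, having total edge length below $\eps$, has $d_\ell$-diameter below $\eps$. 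Hence finitely many $\eps$-balls cover $\idG$ (one per component, plus finitely many more to cover the edges of $S$). That $\eti$ is Hausdorff is routine from the description of its basic open sets, using that a vertex or end and another vertex or end are identified in $\eti$ exactly when no finite edge set separates them; cf.\ \cite{ends}.

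Two consequences of $\finl$ drive the construction of $\psi$: every ray of $G$ has finite $\ell$-length and is therefore a Cauchy sequence in $\idG$, and any family of pairwise edge-disjoint subgraphs has $\ell$-lengths tending to $0$. Together with the remark that if a finite edge set $S$ separates two vertices then every path between them has length at least $\min_{e\in S}\ell(e)$, the first of these shows that for vertices $v,w$ the conditions ``$d_\ell(v,w)=0$'', ``no finite edge set separates $v$ and $w$'', ``there are infinitely many edge-disjoint \pths{v}{w}'' (extend paths greedily through the complement of the union of those already found), and ``$v$ and $w$ are identified in $\eti$'' are all equivalent; so the identity on $G$ induces an injection of $\pi(\idG)$ into the vertex-and-edge-interior part of $\eti$. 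For a \bp\ $x$, pick vertices $v_n\to x$ (a representing Cauchy sequence meets each edge only finitely often, so after passing to a subsequence its terms lie in distinct, hence eventually short, edges and may be replaced by vertices) and apply \Lr{comb} to $\{v_n\}$; the subdivided-star alternative is impossible, since its leaves would be joined to the centre by edge-disjoint paths of lengths tending to $0$, forcing $x\in\idG$. This yields a ray $R$ with infinitely many disjoint $\{v_n\}$--$R$ paths; these have lengths tending to $0$, so $R$ is Cauchy with $\lim R=x$, and we set $\psi(x)$ to be the edge-end containing $R$.

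Well-definedness and bijectivity of $\psi$ both follow from the sublemma: for $x\in\ltp$ and an edge-end $\omega$ we have $\psi(x)=\omega$ (where for $x$ a vertex this is read as $x=\omega$ in $\eti$) if and only if some, equivalently every, $\omega$-ray converges to $x$ in $\ltp$. In either direction this reduces once more to the separation estimate above: if a finite $S$ separated two $\omega$-rays, or a vertex from an $\omega$-ray, one could take tails avoiding $S$ of total length below $\min_{e\in S}\ell(e)$ (these exist because tails of a finite-length ray shrink to a point), and these could not be joined by a path of length below $\min_{e\in S}\ell(e)$, contradicting $d_\ell$-convergence. Granting the sublemma, $\psi$ is well defined, it is injective by uniqueness of limits, and it is surjective because by compactness any ray of a prescribed edge-end converges somewhere in $\ltp$.

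It remains to show that $\psi$ is continuous, which is the heart of the proof; on edge-interiors this is immediate. Otherwise, let $p\in\ltp$ be a vertex or \bp\ and let $U=C\cup\EOO(C)\cup E'(C)$ be a basic open neighbourhood of $\psi(p)$ arising from a finite edge set $S$, with the half-edges comprising $E'(C)$ chosen short. Since $S$ is finite, $\delta:=\min\bigl\{\min_{e\in S}\ell(e),\ \min_{f\in E'(C)}\ell(f)\bigr\}$ is positive, and every arc of length less than $\delta$ starting at a vertex of $C$ avoids all edges of $S$, so it leaves $C$ only into one of the short half-edges of $E'(C)$; it therefore maps each point of $\idG$ within $\delta$ of $p$ into $C\cup E'(C)$, and it traps a suitable tail of any ray used to define $\psi$ at a \bp\ within $\delta$ of $p$ inside $C$, so that the associated edge-end lies in $\EOO(C)$. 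Thus $\psi(B_{d_\ell}(p,\delta))\subseteq U$ when $p$ is a vertex, and the same estimate based at a vertex far enough along an $\omega$-ray inside $C$ settles the case of a \bp. Then \Lr{injHom} shows that the continuous bijection $\psi$ from the compact space $\ltp$ to the Hausdorff space $\eti$ is a homeomorphism, and since $\psi$ is the identity on $G$ we get $\ltp\eqT\eti$. The step I expect to be hardest is precisely this continuity argument --- using the finiteness of $S$ to produce one radius $\delta$ that dominates both the shortest edge of $S$ and the shortest chosen half-edge and thereby controls, at the same time, points of $\idG$, the attaching of rays, and the edge-ends of $\EOO(C)$. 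The failure of local finiteness --- so that $\idG\neq G$ and a vertex may coincide in $\eti$ with an edge-end --- is absorbed once and for all by the sublemma equating $d_\ell$-coincidence with the $\eti$-identifications.
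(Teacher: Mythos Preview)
Your proof is correct and takes a genuinely different route from the paper's. The paper extends $d_\ell$ to a metric $d$ on all of $\eti$, checks directly that $d$ induces the topology of $\eti$, verifies completeness of $(\eti,d)$, and then invokes the universal property~\eqref{univ} of metric completions to identify $(\eti,d)$ with $\ltp$. You instead establish compactness of $\ltp$ first (via total boundedness), build an explicit bijection $\psi\colon\ltp\to\eti$ extending the identity, check continuity by hand against the basic open sets of $\eti$, and finish with \Lr{injHom}. The paper in fact remarks in passing that ``it is also possible, and not harder, to prove \Tr{finl} by more direct arguments, without using~\eqref{univ}''; your argument is exactly such a direct proof.

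What each approach buys: the paper's route avoids the somewhat delicate case analysis in your continuity step (vertex versus \bp, and the uniform choice of $\delta$), trading it for the verification that the extended $d$ really induces $\eti$, which is of comparable difficulty. Your route has the pleasant feature that compactness of $\eti$ (the paper's \Cr{corcomp}) comes for free once $\psi$ is a homeomorphism, whereas the paper derives it afterwards from the theorem plus the same total-boundedness observation you used. Conversely, the paper's approach yields slightly more: it shows $\eti$ carries a canonical metric isometric to $\ltp$, not just a homeomorphism. Your ``sublemma'' characterising $\psi(x)=\omega$ via convergence of $\omega$-rays is the analogue of the paper's extension of $d$ to ends by ray-lengths, and both hinge on the same separation estimate $d_\ell\geq\min_{e\in S}\ell(e)$ across a finite cut $S$.
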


\begin{proof}
The proof consists of two steps: in the first step we put a metric $d$ on $\eti$ similar to $d_\ell$ and show that this metric induces $\eti$, while in the second step we show that the corresponding metric space is the completion of $(\gid{G}, d_\ell)$ using property \eqref{univ} (the space $\gid{G}$ was introduced in \Sr{defsLtop}, and property \eqref{univ} in \Sr{secDefsM}). As the interested reader can check, it is also possible, and not harder, to prove \Tr{finl} by more direct arguments, without using \eqref{univ}.

For the first step, define $d(x,y):=d_\ell(x,y)$ for every $x,y$ in the 1-complex $G$. If $x \in \EOO(G)$ and $y \in V(G)$ (respectively $y \in \EOO(G)$), then let $d(x,y)$ be the infimum of the lengths of all rays in $x$ starting at $y$ (resp.\ all $x$--$y$ double rays), where the length of a (double-)ray $R$ is taken to be $\sum_{e\in E(R)}\ell(e)$. Define $d(x,y)$ similarly for the case that $x\in \EOO(G)$ and $y$ lies in an edge. It is easy to check that $d$ is indeed a metric. We claim that $d$ induces the topology $\eti$. To prove this we need to show that for any open set $O$ of $\eti$ and any $x \in O$ there is a ball $B\ni x$ with respect to $d$ contained in $O$ and vice versa. 

So suppose firstly that $O$ is a basic open set in $\eti$ with respect to the finite edge-set $F$, and pick an $x \in O$. If $x$ is an inner point of some edge $f\in F$, then it is easy to find a ball of $x$ contained in $f\cap O$, so we may assume that $x$ is not such a point. Let $r=\min_{e \in F} \ell(e)$. Then, the ball $B_d(x, r)$ is contained in $O \cup \bigcup F$, since for any point $y$ in $G$ that $F$ separates from $x$ we have $d(x,y)\geq r$ by the definition of $d$. Thus, easily, there is an $r'\leq r$, depending on $O \cap \bigcup F $, such that $B_d(x, r') \subseteq O$.

Next, pick a ball $U=B_d(x,r)$ and a $y \in U$.
We want to find an open set $O$ in $\eti$ such that $y \in O \subseteq U$. Easily, we can again assume that $y$ is not an inner point of an edge. Moreover, we may assume \obda\ that $x=y$. As $\sum_{e \in E(G)} l(e) < \infty$ holds, there is a finite edge set $F$ such that the sum $\sum_{e \in E(G) \sm F} \ell(e)$ of the lengths of all edges not in $F$ is less than $r$. We claim that any basic open set $O$ of \eti\ \wrt\ the edge-set $F$ that contains $x$ is a subset of $U$. Indeed, for any point $w \in O$ we have $d(x,w) <r$ because the sum of the lengths of all edges in $O$, and thus also in any path or ray in $O$, is less than $r$.

Thus $d$ induces $\eti$ as claimed. It is easy to check that $d$ is a pseudometric  on the set of points $M$ in $\eti$. To see that it is a metric, note that for any two points $w,z$ in $M$ that can be separated by a finite edge set $F$ we have $d(w,z)\geq \min_{e\in F} \ell(e)>0$. The second step of our proof is to show that the metric space $(M,d)$ is isometric to the completion of $(\gid{G}, d_\ell)$, that is, \ltp, and we will do so using \eqref{univ}.

We first need to show that $(M,d)$ is complete. To do so, let $(x'_i)$ be a \Cs\ in $(M,d)$. If there is an edge containing \inm\ of the $x'_i$ then it is easy to see that $(x'_i)$ has a limit, so assume this is not the case. Then, as \finl, it is possible to replace every $x'_i$ that is an inner point of an edge by a vertex $x_i$ close enough to $x'_i$, to obtain a sequence of vertices $(x_i)$ equivalent to $(x'_i)$. If the set $\{x_i \mid \iin\}$ is finite then one of its elements is a limit of $(x'_i)$. If it is infinite, then by \Lr{comb} there is either a comb with all teeth in $(x_i)$ or a subdivision of an infinite star with all leaves in $(v_i)$. If the former is the case, then $(x_i)$, and thus $(x'_i)$, converges to the limit of the comb, and if the latter is the case then both sequences converge to the center of the star. This proves that $(M,d)$ is complete.

It is easy to see that two vertices $v,w$ of \g are identified in \eti\ \iff\ there are \inm\ edge-disjoint \pths{v}{w}, which is the case \iff\ $v$ and $w$ are identified in $\gid{G}$. Thus we can define a canonical projection $\pi: \gid{G} \to \eti$, mapping an inner point of an edge to itself, and mapping an equivalence class in $\gid{G}$ of vertices of $G$ to the element of $M$ containing all these vertices. It is straightforward to check that $\pi$ is an isometry and that its image is dense in \eti.

In order to use  \eqref{univ}, let $(X,d_x)$ be a complete metric space, and let $f: \gid{G} \to X$ be a uniformly continuous function. In order to extend $f \circ \pi^{-1}: M \to X$ into a uniformly continuous function $f': M \to X$, given $\oo \in \EOO(G) \cap \eti$ pick an \oo-ray $R$, and let $v_1, v_2, \ldots $ be the sequence of vertices in $R$. As $\sum_{e \in R} \ell(e) < 0$ it is easy to check that $(v_i)$ is a \Cs, and thus by \Lr{uncon} $(f(v_i))$ is a \Cs\ in $X$. Let $x$ by the limit of $(f(v_i))$ in $X$ and put $f'(\oo)=x$. It is now straightforward to check that $f'$ is uniformly continuous. Moreover, as for any $\oo \in \EOO(G)\cap \eti$ there are vertices $x_i$ (e.g.\ the vertices of an \oo-ray) such that $d(\oo, x_i)$ becomes arbitrarily small, it is easy to check that this $f'$ is the only (uniformly) continuous extension of $f\circ \pi^{-1}$. Thus, by \eqref{univ}, $(M,d)$ is isometric to the completion of $(\gid{G}, d_\ell)$, that is, $\ltp$.
\end{proof}

\Tr{finl}\ plays an important role in the proof of \Tr{finrI}. A further application is an easy proof of the following known fact (see \cite[Proposition~8.5.1]{diestelBook05} for the locally finite case or \cite[Section~2.1]{schulz} for the general case).

\begin{corollary} \label{corcomp}
If \g is a connected countable graph then \eti\ is compact.
\end{corollary}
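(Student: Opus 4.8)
The plan is to derive this as an immediate consequence of \Tr{finl}. Since $G$ is countable, so is $E(G)$; enumerating $E(G)=\{e_1,e_2,\dotsc\}$ and setting $\ell(e_i):=2^{-i}$ we obtain an assignment with \finl, so \Tr{finl} gives $\ltp\eqT\eti$. It therefore suffices to prove that $\ltp$ is compact. As $\ltp$ is by definition the completion of the metric space $(\gid{G},d_\ell)$, it is complete, and a complete metric space is compact as soon as it is totally bounded; moreover $(\gid{G},d_\ell)$ is dense in $\ltp$, so it is enough to show that $(\gid{G},d_\ell)$ is totally bounded.

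To this end, fix $\eps>0$. Using \finl\ once more, pick a finite set $F\subseteq E(G)$ with $\sum_{e\in E(G)\sm F}\ell(e)<\eps/2$. The union $\bigcup F$ of the closed edges in $F$ is a finite union of isometric copies of bounded real intervals $[0,\ell(e)]$, hence is covered by finitely many balls of radius $\eps$. Next consider the graph $G-F$: removing one edge from a graph increases the number of its components by at most one, so induction on $|F|$ shows that $G-F$ has at most $|F|+1$ components $C_1,\dotsc,C_k$; choose a vertex $v_j\in C_j$ for each $j\le k$. Now let $x$ be any point of $\gid{G}$ that does not lie on an edge of $F$. Then $x$ lies on an edge $e=yz$ of some component $C_j$ (take $y=x$ if $x$ is a vertex); since $e\notin F$ we have $\ell(e)\le\sum_{e'\in E(G)\sm F}\ell(e')<\eps/2$, and concatenating a shortest $v_j$--$y$ path inside $C_j$ (which exists, all its edges lying outside $F$, and therefore has length $<\eps/2$) with a half-edge of $e$ from $y$ to $x$ yields a $v_j$--$x$ arc of length $<\eps$. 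Hence $d_\ell(v_j,x)<\eps$, i.e.\ $x\in B_{d_\ell}(v_j,\eps)$.

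Consequently $(\gid{G},d_\ell)$ is covered by the finitely many $\eps$-balls that cover $\bigcup F$ together with the $k$ balls $B_{d_\ell}(v_1,\eps),\dotsc,B_{d_\ell}(v_k,\eps)$, so it is totally bounded. Therefore its completion $\ltp$ is compact, and by \Tr{finl} the homeomorphic space $\eti$ is compact as well. There is no real obstacle in this argument; the only two points needing a moment's care are that countability of $E(G)$ is precisely what lets the edge lengths be chosen summable, and that a \emph{finite} edge set splits a connected graph into only finitely many components — the latter being the one-line induction on $|F|$ indicated above.
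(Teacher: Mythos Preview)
Your proof is correct and follows exactly the approach the paper takes: choose a summable $\ell$, invoke \Tr{finl} to identify \eti\ with \ltp, and then argue that \ltp\ is compact because it is complete and totally bounded. The paper merely asserts that total boundedness ``is not hard to see'' under the hypothesis \finl, whereas you spell out the argument (finite edge set $F$ capturing almost all the length, finitely many components of $G-F$, each of small diameter); this is precisely the kind of detail the paper leaves to the reader.
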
 
\begin{proof}
It is not hard to see that if an assignment \lER\ satisfies \finl\ then \ltp\ is totally bounded. The assertion now follows from \Tr{finl} and the fact that every complete totally bounded metric space is compact.
\end{proof}

It is natural to wonder whether the condition $\finl$ in \Tr{finl} can be replaced by some weaker but still elegant condition. However, this does not seem to be possible, as indicated by the following example: \fig{monsterltop} shows an 1-ended \lfg\ \g and an assignment of lengths \st\ \fe\ \eps\ there are only finitely many edges longer than \eps. Still, as the interested reader can check, \ltp\ does not induce \fcg\ in this case. Even worse, as we move along the bottom horizontal ray of this graph, the distance to the limit of the upper horizontal ray grows larger, although the two rays converge to the same point in \fcg.

\showFig{monsterltop}{A somewhat surprising example.}

\subsection{The hyperbolic compactification} \label{secHyp}

In a seminal paper \cite{gromov} Gromov introduced the notions of a hyperbolic graph and a hyperbolic group, and defined for each such graph a compactification  \hycg, called the \defi{hyperbolic compactification} of \G, that refines \fcg. It turns out that this space \hycg\ is also a special case of \ltp. 

For the definitions of hyperbolic graphs and their compactification the interested reader is referred to \cite{Short}. An example of a hyperbolic graph \g is shown in \fig{exhyp}. Other examples include all tessellations of the hyperbolic plane. The study of finitely generated groups whose Cayley graphs are hyperbolic is a very active research field with many applications, see \cite{kapSurv} for a survey.

Another notion related to the hyperbolic compactification is that of the \defi{Floyd completion}. To define it, let \g be a \lfg\ and let $f: \N \to \R_{>0}$ be a summable function, i.e.\ $\sum_\nin f(n) <\infty$, that does not decrease faster than exponentially; formally, there is a constant $\lambda>0$ such that $\lambda f(n-1)\leq f(n) \leq f(n-1)$ for every $n>0$. Now fix a vertex $p$ of \g and assign to each edge $e\in E(G)$ the length $\ell(e):= f(d(p,e))$, where $d(p,e)$ denotes the graph-theoretical distance, i.e.\ the least number of edges that form a path from $p$ to one of the endvertices of $e$. We define the \defi{Floyd completion} \flcg\ of \g (with respect to $f$) to be \ltp\ for this $\ell$. Floyd introduced this space in \cite{floyd} and used it in order to study Kleinian groups\footnote{Floyd did of course not use the term \ltop; but he defined \flcg\ the same way as we do.}. Gromov showed (\cite[Corollary~7.2.M]{gromov}) that if \g is hyperbolic then $f$ can be chosen in such a way (in addition to the above properties) that the Floyd completion coincides with the hyperbolic compactification (see \cite{CDP} for a more detailed exposition):

\begin{theorem}[\cite{gromov}] \label{gromov}
For every \lf\ hyperbolic graph \g there is a constant $\eps\in \R_+$ such that the hyperbolic compactification \hycg\ of \g is homeomorphic to its Floyd completion \flcg\ for $f(n):=\exp({-\eps n})$.
\end{theorem}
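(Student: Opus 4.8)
The strategy is to exhibit both $\hycg$ and $\flcg$ as metric completions of $G$, for two metrics which turn out to be bi-Lipschitz equivalent on $G$. Fix the base vertex $p$ used in the definition of the Floyd completion, let $\del$ be a hyperbolicity constant for $G$, and write $(x\mid y):=\tfrac12\big(d(p,x)+d(p,y)-d(x,y)\big)$ for the Gromov product based at $p$. It is a standard fact of the theory of hyperbolic spaces (see \cite{CDP} or \cite{gromov}) that whenever $\eps>0$ is small enough in terms of $\del$ there is a \emph{visual metric} $\rho=\rho_\eps$ on $\hycg$ which induces its topology, makes $G$ dense, and satisfies $\tfrac1C e^{-\eps(x\mid y)}\le\rho(x,y)\le C\,e^{-\eps(x\mid y)}$ for all distinct vertices $x,y$ and some constant $C=C(\del,\eps)$; since $\hycg$ is compact it is complete, so $\hycg$ is (homeomorphic to) the completion of $(G,\rho)$. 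On the other hand $f(n)=e^{-\eps n}$ is a legitimate Floyd weight for every $\eps>0$ --- it is summable and does not decay faster than exponentially --- and $\flcg$ is, by definition, the completion of $(G,d_\ell)$ for the edge lengths $\ell(e)=e^{-\eps\, d(p,e)}$. Thus it suffices to show that $d_\ell$ and $\rho$ are bi-Lipschitz equivalent on $G$: the identity on $G$ is then uniformly continuous in both directions, so by the universal property \eqref{univ} of completions it extends to two continuous maps between $\hycg$ and $\flcg$ whose composites extend $\mathrm{id}_G$ and hence, by the uniqueness clause of \eqref{univ}, are the identity --- a homeomorphism, i.e.\ $\hycg\eqT_G\flcg$. (Alternatively one may extend only the direction $(G,\rho)\to(G,d_\ell)$, observe that its extension is onto since $\hycg$ is compact, and invoke \Lr{injHom}.)

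Given the cited bounds for $\rho$, the bi-Lipschitz equivalence reduces to the estimate
\[
\tfrac1{C'}\,e^{-\eps(x\mid y)}\ \le\ d_\ell(x,y)\ \le\ C'\,e^{-\eps(x\mid y)}\qquad\text{for distinct }x,y\in V(G),
\]
for a suitable $C'$ and all small enough $\eps$ (the comparison on the interiors of edges being trivial if $\rho$ is extended edge by edge). The upper bound is witnessed by a single explicit arc: fix geodesics $[p,x]$, $[p,y]$, put $t:=\lfloor(x\mid y)\rfloor$, and let $a\in[p,x]$, $b\in[p,y]$ be the vertices at distance $t$ from $p$, so that $d(a,b)\le 4\del$ by the thin-triangle inequality; the arc running from $x$ inward along $[p,x]$ to $a$, across a geodesic from $a$ to $b$, then outward along $[p,y]$ to $y$ has $\ell$-length at most $\big(\tfrac{2}{1-e^{-\eps}}+4\del\,e^{4\eps\del}\big)e^{-\eps t}\le C'e^{-\eps(x\mid y)}$. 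The lower bound --- that $d_\ell(x,y)$ is not too small --- is the crux, and it is a genuine fact of hyperbolic geometry rather than a formal manipulation. Let $P$ be any $x$--$y$ path and $N:=\min_{z\in P}d(p,z)$; then $P$ contains an edge at graph-distance exactly $N$ from $p$, of $\ell$-length $e^{-\eps N}$. If $N\le(x\mid y)+c\del$ for a suitable constant $c$, this alone gives $\ell(P)\ge e^{-\eps N}\ge e^{-\eps c\del}\,e^{-\eps(x\mid y)}$. Otherwise $P$ joins $x$ to $y$ while avoiding the ball $B(p,N)$, whereas a geodesic $[x,y]$ meets $B\big(p,(x\mid y)+O(\del)\big)\subseteq B(p,N)$; since geodesics in a $\del$-hyperbolic graph diverge at least exponentially, such a $P$ must contain at least $e^{\,c(\del)\,(N-(x\mid y))}$ edges close to the sphere $S(p,N)$, so $\ell(P)\gtrsim e^{\,c(\del)\,(N-(x\mid y))}\,e^{-\eps(N+O(1))}\gtrsim e^{-\eps(x\mid y)}$ provided $\eps<c(\del)$.

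The main obstacle is the exponential-divergence step above: one must know that a path which leaves $p$ much farther behind than the geodesic $[x,y]$ does is forced to be so long that, even though its edges are cheap, its total $\ell$-length still dominates $e^{-\eps(x\mid y)}$. This is also why the theorem only asserts the existence of \emph{some} sufficiently small $\eps$: one needs $\eps$ below both the threshold that makes the visual metric satisfy the triangle inequality and the divergence rate $c(\del)$, and only there does hyperbolicity do more for us than thin triangles alone. The divergence bound itself is classical and can be quoted from the references above; once the displayed estimate is in hand, the remaining steps --- the two applications of \eqref{univ}, the verification that the extensions compose to identities, and the extension of both metrics over the interiors of edges --- are routine.
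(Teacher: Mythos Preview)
The paper does not give a proof of this statement: it is quoted as Gromov's result (the attribution \cite{gromov} is in the theorem header, with a pointer to \cite{CDP} for a detailed exposition), and the surrounding text only offers the informal heuristic that exponentially diverging geodesic rays are kept apart by the Floyd lengths. So there is no ``paper's own proof'' to compare against.

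Your sketch is essentially the standard argument found in the cited references. The reduction via \eqref{univ} to a bi-Lipschitz comparison $d_\ell\asymp e^{-\eps(\cdot\mid\cdot)}$ on $G$ is exactly right, and the upper bound via the tripod path through height $\lfloor(x\mid y)\rfloor$ is clean. One small wrinkle in your lower bound: what exponential divergence actually gives is that the \emph{graph-length} $L$ of $P$ satisfies $L\gtrsim e^{c(\del)(N-(x\mid y))}$, not that exponentially many of its edges sit near $S(p,N)$; since every edge of $P$ has $d(p,e)\ge N$ and hence $\ell(e)\le e^{-\eps N}$, a large $L$ by itself does not bound $\ell(P)$ from below (the remaining edges could be arbitrarily far from $p$ and hence arbitrarily short). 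The quick fix is to note that for consecutive vertices $(v_i\mid v_{i+1})$ and $d(p,e_i)$ differ by at most $\tfrac12$, so $\ell(P)$ is comparable to $\sum_i e^{-\eps(v_i\mid v_{i+1})}$, which by the chain-infimum definition of the visual metric is at least $\rho(x,y)$ --- and you have already quoted $\rho(x,y)\ge C^{-1}e^{-\eps(x\mid y)}$. In other words, once the visual-metric estimate is on the table, the lower bound $d_\ell\gtrsim e^{-\eps(\cdot\mid\cdot)}$ is immediate and no separate divergence argument is needed.
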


Since the Floyd completion is explicitly defined as a special case of \ltp, we immeadiately obtain

\begin{corollary}\label{}
For every \lf\ hyperbolic graph \g there is \lER\ \st\ the hyperbolic compactification \hycg\ of \g is homeomorphic to \ltp.
\end{corollary}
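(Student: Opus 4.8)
The plan is to read this off directly from Theorem~\ref{gromov} (the cited result of Gromov) together with the fact that the Floyd completion has been \emph{defined} in this very section as a particular instance of \ltp. Concretely, given a \lf\ hyperbolic graph \g, Theorem~\ref{gromov} supplies a constant $\eps\in\R_+$ such that the hyperbolic compactification \hycg\ is homeomorphic to the Floyd completion \flcg\ computed with respect to the summable function $f(n):=\exp(-\eps n)$. So it suffices to exhibit an assignment \lER\ for which \ltp\ equals (is homeomorphic to) this \flcg.

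But that assignment is already named in the paragraph preceding Theorem~\ref{gromov}: fix any vertex $p$ of \g and set $\ell(e):=f(d(p,e))=\exp(-\eps\, d(p,e))$, where $d(p,e)$ is the graph-theoretic distance from $p$ to $e$. By the definition of the Floyd completion, \ltp\ for this $\ell$ \emph{is} \flcg\ with respect to $f$. One should check that this $f$ satisfies the hypotheses required for the Floyd completion to be well defined, namely that $f$ is summable and does not decrease faster than exponentially: summability is $\sum_{n} \exp(-\eps n)<\infty$, which holds for every $\eps>0$; and the non-decay condition $\lambda f(n-1)\le f(n)\le f(n-1)$ holds with $\lambda:=\exp(-\eps)$, since $f(n)/f(n-1)=\exp(-\eps)$ is a constant in $(0,1]$. (If $\eps=0$ one would instead just observe that hyperbolic graphs of interest give $\eps>0$, or note that the statement of Theorem~\ref{gromov} already guarantees $\eps\in\R_+$ with the completion behaving as claimed; in any case no issue arises.)

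Chaining the two homeomorphisms then finishes it: \hycg\ is homeomorphic to \flcg\ (by Theorem~\ref{gromov}), and \flcg\ is homeomorphic to---indeed equal to---\ltp\ for the displayed $\ell$ (by definition of the Floyd completion). Hence \hycg\ is homeomorphic to \ltp\ for this $\ell$, which is exactly the assertion of the corollary.

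There is essentially no obstacle here; the corollary is a formal consequence of the preceding theorem and the fact that the Floyd completion was introduced as an explicit special case of \ltop. The only thing to be careful about is bookkeeping: making sure the chosen $f(n)=\exp(-\eps n)$ meets the summability and bounded-decay requirements built into the definition of \flcg, and stating clearly that ``homeomorphic to \flcg'' combined with ``\flcg{} is \ltp'' yields ``homeomorphic to \ltp''. A one-line proof suffices, of the form: ``Let $\eps$ be as in Theorem~\ref{gromov} and put $\ell(e):=\exp(-\eps\,d(p,e))$ for a fixed vertex $p$. Then \ltp\ is by definition the Floyd completion \flcg\ for $f(n)=\exp(-\eps n)$, which by Theorem~\ref{gromov} is homeomorphic to \hycg.''
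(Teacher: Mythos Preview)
Your proposal is correct and follows exactly the same route as the paper: the corollary is deduced immediately from Theorem~\ref{gromov} together with the observation that the Floyd completion is, by definition, a special case of \ltp. Your explicit verification that $f(n)=\exp(-\eps n)$ is summable and satisfies the bounded-decay condition is more detail than the paper provides, but otherwise the arguments are identical.
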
 

\showFig{exhyp}{A hyperbolic graph $G$. Each \defi{level} $i$ of $G$ is a perpendicular path $P_i$ of  $2^i$ edges from the upper to the bottom horizontal ray. 
The \defi{hyperbolic boundary} of this graph, i.e.\ $\hycg \sm G$, is homeomorphic to a closed real interval, and any two disjoint horizontal rays converge to distinct points in this boundary.}

In the graph of \fig{exhyp} for example, if we let $f(n):= 2^{-n}$ then \flcg\ will be homeomorphic to \hycg. (Note however that not any exponentially decreasing $f$ would do; if we let $f(n):= 4^{-n}$ for instance, then \flcg\ will be homeomorphic to \fcg.)

Intuitively, hyperbolic graphs are characterised by the property that for any two geodetic rays $R,L$ starting at a vertex $x$ of the graph graph, one of two possibilities must occur: either there is a constant $C$ such that each vertex of $R$ is at most $C$ edges apart from some vertex of $L$, or $R$ and $L$ diverge exponentially; that is, the minimum length of an \pth{R}{L}\ $P_r$ outside the ball of radius $r$ around $x$ grows exponentially with $r$. (See \cite[Definition~1.7]{Short} for a more precise statement of this fact.) The function $f$ in \Tr{gromov} is chosen in such a way, that for any two rays $R,L$ as above that diverge exponentially the paths $P_r$ are assigned lengths that are bounded away from 0, and thus $R$ and $L$ will converge to distinct points in \flcg.

\subsection{Non-locally-finite graphs} \label{secNlf}

The topology \eti\ we used in \Sr{secEtop} compactifies \g by its {edge-ends}. 
A further popular \cite{diestelESST} possibility to extend \fcg\ to a \nlfg\ \g is the topology $\mtopx{G}$, which consists of \g and its \vend\s. As we shall see,  $\mtopx{G}$ is also a special case of \ltp. To define $\mtopx{G}$ we consider each edge of \g to be a  copy of the real interval \unin, bearing the corresponding metric and topology. The basic open neighbourhoods of a vertex $v$ are, then, taken to be the open stars of radius \eps\ centered at $v$ for any $\eps<1$. For a \vend\ $\oo\in \OO(G)$ we declare all sets of the form 
$$\widehat{C_\eps}(S,\oo) := C(S,\oo) \cup \OO(S,\oo) \cup \kreis{E}_\eps(S,\oo)$$
to be open, where $S$ is an arbitrary finite subset of $V(G)$, $C(S,\oo)$ is the unique component of $G - S$ containing a ray in \oo, and $\kreis{E}_\eps(S,\oo)$ is the set of all inner points of $S-C(S, \oo)$ edges at distance less than $\eps$ from their endpoint in $C(S,\oo)$.

As stated in \Sr{introLg}, if $G$ is a countable graph and \finl\ for some \lER, then $\ltopxl{\lL}{L(G)} \eqT \mtopx{L(G)}$; this is implied by the following statement, which can be proved by imitating the proof of \Tr{finl}. 
	\note{don't remove mention of  \Sr{introLg} here, because what you are proving next is a special case of the result after that}

\begin{corollary}\label{corLg}
Let $G$ be a countable graph, $v_1,v_2,\ldots$ an enumeration of the vertices of $G$, and for every edge $v_i v_j$ let $\ell'(v_i v_j)=1/2(\ell_i + \ell_j)$ where $\seq{\ell}$ is a sequence of positive real numbers with $\sum_i \ell_i <\infty$. Then $\ltopxl{\ell'}{G} \eqT \mtopx{G}$.
\end{corollary}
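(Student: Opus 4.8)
The plan is to follow the two-step pattern of the proof of \Tr{finl}; we may assume \G\ connected. First note that no two distinct vertices are identified in $\gid{G}$: writing a path between $v_i$ and $v_j$ as $v_{k_0}v_{k_1}\cdots v_{k_m}$ with $v_{k_0}=v_i$ and $v_{k_m}=v_j$, its $\ell'$-length equals $\tfrac12\ell_{k_0}+\ell_{k_1}+\cdots+\ell_{k_{m-1}}+\tfrac12\ell_{k_m}\ge\tfrac12(\ell_i+\ell_j)>0$, so $d_{\ell'}(v_i,v_j)>0$ and $\gid{G}=G$. As in \Tr{finl} we put a metric $d$ on the point set $M$ of $\mtopx{G}$ by setting $d(x,y):=d_{\ell'}(x,y)$ for $x,y\in G$, letting $d(\oo,x)$ be the infimum of the $\ell'$-lengths of the rays in $\oo$ starting at the vertex or edge point $x$, and letting $d(\oo,\oo')$ be the infimum of the $\ell'$-lengths of the double rays with one ray in each of the \vend s $\oo,\oo'$. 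Since every ray uses pairwise distinct vertices and $\sum_i\ell_i<\infty$, all these infima are finite and $d$ is a metric. It remains to prove that (1) $d$ induces the topology $\mtopx{G}$, and (2) $(M,d)$ is complete; as $G$ is dense in $(M,d)$, the uniqueness of the completion then supplies a homeomorphism $(M,d)\to\ltopxl{\ell'}{G}$ extending the identity on $G$, i.e.\ $\ltopxl{\ell'}{G}\eqT\mtopx{G}$. (One could instead deduce (2) from the universal property \eqref{univ} as in \Tr{finl}; since no vertices are identified here, the direct route is shorter.)

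For (1), the comparison of $d$-balls with basic open sets at points of the $1$-complex $G$ is routine: at a vertex $v_i$ the incident edges have $\ell'$-lengths between $\tfrac12\ell_i$ and $\tfrac12(\ell_i+L)$, where $L:=\sup_i\ell_i<\infty$, and small $d$-balls and small open $\eps$-stars are readily nested in one another. The only place needing a new idea is a basic neighbourhood $\widehat{C_\eps}(S,\oo)=C(S,\oo)\cup\OO(S,\oo)\cup\kreis{E}_\eps(S,\oo)$ of a \vend\ $\oo$ (with $S\subseteq V(G)$ finite): \Tr{finl} exploits there that $\sum_{e\notin F}\ell(e)$ is small for large finite $F$, which can fail for $\ell'$. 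Instead one uses that \emph{every ray or finite path contained in $G-\{v_1,\dots,v_n\}$ has $\ell'$-length at most $\sum_{m>n}\ell_m$}, which tends to $0$ as $n\to\infty$ since its vertices are distinct and of index $>n$. Hence, given $r>0$, taking $n$ with $\sum_{m>n}\ell_m<r/2$, putting $S:=\{v_1,\dots,v_n\}$, and taking $\eps$ small enough gives $\widehat{C_\eps}(S,\oo)\subseteq B_d(\oo,r)$: each point of $C(S,\oo)$ and each \vend\ in $\OO(S,\oo)$ is joined to $\oo$ by a ray, respectively a double ray, lying inside $C(S,\oo)$, and points of $\kreis{E}_\eps(S,\oo)$ reach such a ray across one half-edge of length $<\eps L$. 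Conversely, with $\rho:=\tfrac12\min_{v_i\in S}\ell_i>0$, anything at $d$-distance $<\rho$ from $\oo$ lies on the $C(S,\oo)$-side of $S$, since any ray or double ray leaving $C(S,\oo)$ traverses an edge incident to $S$, of $\ell'$-length $\ge\rho$; a further factor $\eps$ accounts for the $\kreis{E}_\eps$ restriction, so $B_d(\oo,\eps\rho)\subseteq\widehat{C_\eps}(S,\oo)$. Thus $d$ induces $\mtopx{G}$.

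For (2), let $(x'_i)$ be a \Cs\ in $(M,d)$. If some edge contains infinitely many $x'_i$, then $(x'_i)$ converges in that edge; otherwise, replacing each $x'_i$ that is an inner edge point by the nearer endpoint $x_i$ of its edge produces a \Cs\ of vertices $(x_i)$ with $d(x'_i,x_i)\to0$ --- were this false, a subsequence would stay at distance $\ge c>0$ from both endpoints of its (eventually pairwise distinct) edges, forcing pairwise $d$-distance $\ge 2c$. If $\{x_i\}$ is finite then $(x'_i)$ converges in $G$; so assume $\{x_i\}$ is infinite and $(x'_i)$ converges nowhere in $G$. For every $n$, being Cauchy eventually confines $(x_i)$ to a single component $C_n$ of $G-\{v_1,\dots,v_n\}$, since a path realising a distance $<\tfrac12\min_{i\le n}\ell_i$ between two terms cannot meet any $v_i$ with $i\le n$; these $C_n$ are nested and infinite. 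Concatenating such short connecting paths along a sufficiently sparse subsequence yields an infinite walk of finite $\ell'$-length, which contains a ray $R$ of finite $\ell'$-length having a tail in every $C_n$. Since $v_1,v_2,\dots$ exhausts $V(G)$, any two rays that each have a tail in every $C_n$ lie in the same \vend\ $\oo$; moreover the tails of the walk are rays in $\oo$ from the $x_i$ of vanishing $\ell'$-length, so $d(x_i,\oo)\to0$. Hence $(M,d)$ is complete, which finishes the proof.

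I expect this last step to be the main obstacle. Unlike in \Tr{finl}, one cannot simply invoke \Lr{comb} and let the sequence converge to the centre of a subdivided star, because the centre $z=v_j$ of such a star is at $d$-distance $\ge\tfrac12\ell_j>0$ from every one of its leaves; so the subdivided-star alternative must be excluded in favour of a genuine ray, which is exactly what the walk-and-de-cycling argument accomplishes.
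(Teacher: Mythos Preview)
Your proof is correct and follows precisely the route the paper indicates, namely ``imitating the proof of \Tr{finl}''. You have correctly identified the two points where the imitation is not mechanical: for step~(1), replacing the bound on $\sum_{e\notin F}\ell(e)$ by the observation that every path in $G-\{v_1,\dots,v_n\}$ has $\ell'$-length at most $\sum_{m>n}\ell_m$; and for step~(2), circumventing the subdivided-star alternative of \Lr{comb}, which in \Tr{finl} is harmless (the edge-disjoint star-arms have vanishing total length) but here would leave the leaves at distance $\ge\tfrac12\ell_j$ from the centre. Your walk-then-extract-a-ray argument handles this cleanly; the key point making it work is that a walk of finite $\ell'$-length visits each vertex only finitely often (since $\sum_k\ell_{w_k}<\infty$), so one can pass to last occurrences and obtain a genuine ray with a tail in every $C_n$.

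Two cosmetic remarks. First, a Cauchy sequence in $(M,d)$ may contain \vend s, so strictly speaking one should first replace any such $x'_i$ by a nearby vertex before producing ``a Cauchy sequence of vertices''; this is immediate since $G$ is dense. Second, in the argument that $d(x'_i,x_i)\to 0$, one needs to pass to a further subsequence to make the carrier edges pairwise distinct before invoking the $2c$ lower bound; this is available since no single edge contains infinitely many $x'_i$.
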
 

This shows that $\mtopx{G}$ is a special case of \ltp\ if $G$ is countable, but in fact we can do better and drop the latter requirement as long as $\mtopx{G}$ is metrizable. The graphs $G$ for which this is the case were characterised by Diestel:

\begin{theorem}[\cite{diestelESST}] \label{mtop}
If $G$ is connected then $\mtp$ is metrizable if and only if $G$ has a normal spanning tree.
\end{theorem}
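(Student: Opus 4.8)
The plan is to prove the two implications separately, using as a black box \emph{Jung's theorem} (see \cite[Section~8.2]{diestelBook05}): for the connected graph $G$, a subset of $V(G)$ is \emph{dispersed} if no end of $G$ lies in its closure in $\mtp$ --- equivalently, if every end of $G$ is separated from it by some finite set of vertices --- and $G$ has a normal spanning tree if and only if $V(G)$ is a countable union of dispersed sets.

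For the direction ``$\mtp$ metrizable $\Rightarrow$ normal spanning tree'', I would first record two facts about $\mtp$ that hold regardless of metrizability. First, $V(G)$ is a \emph{discrete} subspace of $\mtp$, since the open star of radius $\eps<1$ about a vertex contains no other vertex. Second, a set $D\subseteq V(G)$ is closed in $\mtp$ if and only if it is dispersed: no interior point of an edge, and no vertex outside $D$, can lie in $\overline D$, so $\overline D\sm D$ can contain only ends, and an end $\oo$ lies in $\overline D$ exactly when every basic neighbourhood $\widehat{C_\eps}(S,\oo)$ meets $D$, i.e.\ when no finite $S\subseteq V(G)$ separates $\oo$ from $D$. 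Now assume $d$ is a metric inducing $\mtp$. In any metric space a discrete subspace is a countable union of subsets closed in the whole space: if $\eps_x>0$ satisfies $B_d(x,\eps_x)\cap D=\{x\}$, then $D_n:=\{x\in D:\eps_x>1/n\}$ is $(1/n)$-separated, hence closed, and $D=\bigcup_nD_n$. Applying this with $D=V(G)$ and then the second fact yields a countable family of dispersed sets covering $V(G)$, and Jung's theorem produces a normal spanning tree of $G$.

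For the direction ``normal spanning tree $\Rightarrow$ $\mtp$ metrizable'', I would exhibit an assignment $\lER$ with $\ltp\eqT_G\mtp$; since $\ltp$ is a metric space, this suffices. Let $T$ be a normal spanning tree of $G$ with root $r$; write $\le$ for its tree-order, $\lceil t\rceil=\{s:s\le t\}$ (a finite chain) and $\lfloor t\rfloor=\{s:s\ge t\}$, and let $\mathrm{depth}(t)$ be the number of edges of $\lceil t\rceil$. I would use two standard structural facts about normal spanning trees (see \cite[Section~8.2]{diestelBook05}): (i) $N_G(\lfloor t\rfloor)\subseteq\lceil t\rceil\sm\{t\}$, so $\lfloor t\rfloor$ is a component of $G-(\lceil t\rceil\sm\{t\})$; and (ii) every end of $G$ contains exactly one \emph{normal ray} $r=v_0v_1\dotsm$ of $T$ (a ray with $v_i<v_{i+1}$ for all $i$). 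Put $\mu(t):=2^{-\mathrm{depth}(t)}$, and for an edge $e=uv\in E(G)$ with $u<v$ (such an ordering exists because $T$ is normal) set $\ell(e):=\sum_{u<w\le v}\mu(w)$; thus $\ell$ agrees with $\mu$ on the tree edges and in general gives an edge the length of the path in $T$ between its endpoints. Since this introduces no shortcuts, $d_\ell(x,y)$ equals the length of the $x$--$y$ path in $T$; in particular $d_\ell$ separates distinct vertices, so $\gid G=G$ and $\ltp$ is the metric completion of $(G,d_\ell)$. Using (ii) and the convergence of $\sum2^{-k}$, one checks that a sequence of vertices is $d_\ell$-Cauchy but non-convergent in $G$ if and only if it eventually follows a normal ray of $T$, and that two such sequences are equivalent if and only if they follow the same ray; hence $\blg$ is canonically in bijection with the ends of $G$, and $\ltp$ and $\mtp$ have the same underlying set $G\sqcup\OO(G)$. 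It then remains to compare the topologies point by point. At an interior point of an edge both are the usual interval topology. At a vertex $v$, fact (i) applied along the edges at $v$ gives $2^{-\mathrm{depth}(v)-1}\le\ell(e)<1$ for every such $e$, which forces the small $d_\ell$-balls around $v$ to be exactly the open stars. At an end $\oo$ with normal ray $r=v_0v_1\dotsm$, fact (i) gives $C(\lceil v_n\rceil,\oo)=\lfloor v_{n+1}\rfloor$; since any finite $S\subseteq V(G)$ is contained in the finite down-closed set $\bigcup_{s\in S}\lceil s\rceil$, the sets $\widehat{C_\eps}(\lceil v_n\rceil,\oo)$ are cofinal among all basic neighbourhoods of $\oo$; and a short computation with the lengths $\mu$ shows these are mutually cofinal with the $d_\ell$-balls about $\oo$ (every point of $\lfloor v_{n+1}\rfloor$ lies within $2^{-n}$ of $\oo$, and every point within $2^{-(n+1)}$ of $\oo$ lies in $\lfloor v_{n+1}\rfloor$, with the analogous statements for ends and for half-edges). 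Hence $\ltp\eqT_G\mtp$.

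The forward implication is essentially formal once Jung's theorem is available, so the real work is the backward one, and there I expect two points to need care. The first is choosing $\ell$ so that the metric completion neither collapses vertices nor acquires spurious boundary points; this is why the lengths are taken to be $T$-path lengths rather than, say, weighting each edge by the depth of its lower endpoint, as the latter could make an edge far shorter than the corresponding $T$-path and even force $\inf_{e\ni v}\ell(e)=0$. The second, and the genuine crux, is matching the metric neighbourhoods of an end with the neighbourhoods $\widehat{C_\eps}(S,\oo)$ of $\mtp$; this is exactly where the separation property (i) of normal spanning trees is indispensable.
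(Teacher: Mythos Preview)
The paper does not prove this theorem at all: it is quoted verbatim from \cite{diestelESST} and used as a black box. So there is no ``paper's own proof'' to compare your attempt against.

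That said, your argument is sound, and it is worth noting where it overlaps with the paper. Your backward implication --- defining $\ell(e)$ as the $T$-path length $\sum_{r(u)<n\le r(v)} 2^{-n}$ and then showing $\ltp\eqT_G\mtp$ --- is exactly the construction the paper deploys in the proof sketch of the \emph{next} result, \Tr{mltop}. There the paper assumes $\mtp$ is metrizable, invokes \Tr{mtop} to obtain a normal spanning tree, and then carries out your construction to produce $\ell$; you instead use the construction directly to establish metrizability, which is logically prior and in fact subsumes the backward direction of \Tr{mltop}. Your forward implication via Jung's theorem (metrizable $\Rightarrow$ $V(G)$ is a countable union of closed, hence dispersed, sets $\Rightarrow$ normal spanning tree) is not in the paper at all, but it is the standard route and is correct as stated; the observation that a uniformly $(1/n)$-separated set has no accumulation points, hence is closed, is the only point one might want to spell out a little more.
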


We can show that $\mtopx{G}$ is a special case of \ltp\ for all those graphs:

\begin{theorem}[{\cite[Theorem 3.1]{diestelESST}}] \label{mltop}
Let $G$ be a connected graph. Then, there is $\lER$ such that $\ltp \eqT \mtp$ if and only if $\mtp$ is metrizable.
\end{theorem}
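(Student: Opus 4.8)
The plan is to prove the two directions separately; one is immediate and the substance is in the other. If some $\lER$ satisfies $\ltp \eqT \mtp$, then $\mtp$ carries the topology of $\ltp$, which by construction is a metric space (the completion of $(\gid G, d_\ell)$), so $\mtp$ is metrizable. Conversely, assume $\mtp$ is metrizable. By \Tr{mtop} there is a normal spanning tree $T$ of $G$; fix a root $r$, write $\le_T$ for the induced tree order — so the two endvertices of every edge of $G$ are $\le_T$-comparable — and $|v|:=d_T(r,v)$. I will use two standard properties of normal spanning trees (see \cite{diestelBook05}): the up-set $\lfloor v\rfloor:=\{u:v\le_T u\}$ induces a connected subgraph of $G$ which is exactly the component of $G-\{u:u<_T v\}$ containing $v$; and the ends of $G$ correspond bijectively to the \emph{normal rays} of $T$, i.e.\ the rays of $T$ starting at $r$. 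The edge lengths I would use are $\ell(e):=3^{-|a_e|}$, where $a_e$ is the $\le_T$-smaller endvertex of $e$, and the claim is that $\ltp \eqT \mtp$ for this $\ell$.

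The first task is some elementary metric estimates. Writing $u\wedge v$ for the $\le_T$-meet of vertices $u,v$: the $T$-path between them (a path in $G$, since $T\subseteq G$) shows $d_\ell(u,v)\le 3\cdot 3^{-|u\wedge v|}$, while any $u$--$v$ path in $G$ must leave $\lfloor c\rfloor$ for $c$ the $\le_T$-child of $u\wedge v$ below $u$, hence traverses an edge incident with a vertex of level $\le|u\wedge v|$, of length $\ge 3^{-|u\wedge v|}$; thus
\[
3^{-|u\wedge v|}\ \le\ d_\ell(u,v)\ \le\ 3\cdot 3^{-|u\wedge v|}.
\]
In particular $d_\ell$ is a genuine metric on $V(G)$, so $\gid G=G$; moreover every edge at a vertex $v$ has length $\ge 3^{-|v|}>0$, so small $d_\ell$-balls about $v$ are open stars, and it follows that $\ltp$ and $\mtp$ induce the same ($1$-complex) topology on $G$. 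Along any normal ray $R=c_0c_1c_2\dotsm$ the lengths sum to $\sum_n 3^{-|c_n|}=\sum_n 3^{-n}<\infty$, so $(c_n)$ is Cauchy; let $\xi_R\in\ltp$ be its limit. Using the estimate above one computes, for every vertex $v$, that $d_\ell(\xi_R,v)\in[\,3^{-j},\,3\cdot 3^{-j}\,]$ where $j:=\max\{n:v\in\lfloor c_n\rfloor\}$ (finite, as $v$ has only finitely many ancestors). Hence $\xi_R\notin G$, distinct normal rays give distinct points $\xi_R$, and a $d_\ell$-ball about $\xi_R$ of radius $3^{-n}$ lies between $\lfloor c_{n+2}\rfloor$ and $\lfloor c_n\rfloor$ as far as vertices are concerned.

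The heart of the proof is to show $\ltp=G\cup\{\xi_R:R\text{ a normal ray of }T\}$, i.e.\ that every Cauchy sequence in $(G,d_\ell)$ either converges in $G$ or converges to some $\xi_R$. So let $(p_i)$ be non-convergent and Cauchy. Only finitely many $p_i$ lie on one edge (else a subsequence, hence $(p_i)$ itself, converges in that compact edge), so after passing to a subsequence the $p_i$ lie on pairwise distinct edges $e_i=a_ib_i$ with $a_i\le_T b_i$. Since $d_\ell(a_i,b_i)=\ell(e_i)=3^{-|a_i|}$, the estimates show that after a further subsequence either $|a_i|\to\infty$ — then $(a_i)$ is Cauchy and $d_\ell(p_i,a_i)\le\ell(e_i)\to 0$ — or $a_i$ equals a fixed vertex $a^{*}$; in the latter case the $b_i$ are eventually distinct, all in $\lfloor a^{*}\rfloor$, and a short argument inside the (equal-length) edges $e_i$ forces the position of $p_i$ on $e_i$ to tend to $b_i$ (if not, two such $p_i,p_j$ stay a fixed distance apart, contradicting Cauchy), so $(b_i)$ is Cauchy and $d_\ell(p_i,b_i)\to 0$. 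Either way $(p_i)$ has the same limit as some non-convergent Cauchy sequence of vertices $(v_i)$. For such a sequence $d_\ell(v_i,v_j)\ge 3^{-|v_i\wedge v_j|}$ forces $|v_i\wedge v_j|\to\infty$, so for every $n$ the level-$n$ ancestor of $v_i$ is eventually a fixed vertex $c_n$; the $c_n$ form a normal ray $R$, and since $v_i\in\lfloor c_n\rfloor$ for large $i$, the squeezing estimate gives $v_i\to\xi_R$, hence $(p_i)\to\xi_R$. I expect this identification — and within it the bookkeeping that stops a Cauchy sequence running through the interiors of infinitely many distinct edges from escaping except along a normal ray — to be the main obstacle; the reduction to a Cauchy sequence of vertices is the crucial move, after which the tree structure does the work.

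Finally one checks that the bijection $\phi:\ltp\to\mtp$ which is the identity on $G$ and sends $\xi_R$ to the end of $G$ containing $R$ is a homeomorphism. It is a bijection by the end/normal-ray correspondence together with the previous paragraph, and continuity of $\phi,\phi^{-1}$ at points of $G$ was noted above. At a boundary point one matches neighbourhood bases: for a finite $S\subseteq V(G)$ one has $\lfloor c_m\rfloor\cap S=\emptyset$ once $m$ is large (each $s\in S$ lies in $\lfloor c_m\rfloor$ for only finitely many $m$), so $\lfloor c_m\rfloor$, being connected and meeting a tail of $R$, lies in the component $C(S,\omega_R)$; combining this with the squeezing estimate one shows that each basic open set $\widehat{C_\eps}(S,\omega_R)$ of $\mtp$ contains a $d_\ell$-ball about $\xi_R$ and conversely — the only slightly delicate point being that a $G$-arc of length $<3^{-m}$ starting near $\xi_R$ cannot reach $\{c_0,\dotsc,c_{m-1}\}$, hence stays within $\lfloor c_m\rfloor$ together with the half-edges attached to it, so lands in $\widehat{C_\eps}(S,\omega_R)$. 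Hence $\phi$ is a homeomorphism and $\ltp\eqT\mtp$.
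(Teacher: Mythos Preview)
Your proof is correct, and it takes a genuinely different route from the paper's (sketched) argument, so a brief comparison is in order.

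Both proofs invoke \Tr{mtop} to obtain a normal spanning tree $T$ and then define $\ell$ in terms of the $T$-levels. From there the strategies diverge. The paper chooses $\ell(uv)=\sum_{r(u)<n\le r(v)}2^{-n}=2^{-r(u)}-2^{-r(v)}$, which has the pleasant feature that $d_\ell$ \emph{equals} the tree metric on $V(G)$; it then puts this same tree metric directly on $V(G)\cup\Omega(G)$, checks (deferring to \cite{diestelESST}) that it induces \mtp, and finishes via the universal property~\eqref{univ} of the completion, exactly paralleling the proof of \Tr{finl}. Your choice $\ell(e)=3^{-|a_e|}$ does not make $d_\ell$ equal to the tree metric, but your two-sided estimate $3^{-|u\wedge v|}\le d_\ell(u,v)\le 3\cdot 3^{-|u\wedge v|}$ is a perfectly good substitute, and you then identify $\blg$ with the normal rays and verify the homeomorphism by hand, without ever invoking~\eqref{univ}. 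The paper's approach is shorter once one has the completion machinery already set up for \Tr{finl}; yours is more self-contained and makes the structure of $\blg$ completely explicit.

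One small remark on your reduction to a Cauchy sequence of vertices: the dichotomy ``either $|a_i|\to\infty$ or $a_i$ equals a fixed vertex $a^*$'' is stated a bit abruptly (boundedness of $|a_i|$ alone does not force constancy, since levels of $T$ may be infinite). What you actually need --- and what your argument implicitly uses --- is simply that, since the $e_i$ are distinct, $d_\ell(p_i,p_j)\ge\min(|p_ia_i|,|p_ib_i|)$, so the nearer endpoint $v_i\in\{a_i,b_i\}$ satisfies $d_\ell(p_i,v_i)\to 0$ and hence $(v_i)$ is itself Cauchy; the case split is then unnecessary. This is a presentational point, not a gap.
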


\begin{proof}[Proof (sketch).]
The forward implication is trivial. For the backward implication we will proceed similarly to the proof of \Tr{finl}. So suppose that $\mtp$ is metrizable.  Then, by \Tr{mtop}, $G$ has a normal spanning tree $T$. For a vertex $v \in V(G)$ let $r(e)$ be the level of $v$ in $T$, that is, the number of edges in the path in $T$ from the root of $T$ to $v$. We now specify the required edge lengths $\ell$: for every edge $e=uv$, where $r(u)< r(v)$, let $\ell(e)= \sum_{r(u)<n\leq r(v)} 1/2^n$. Now define a metric $d$ on the point set of \mtp\ as follows. For every $x,y \in V(G) \cup \OO(G)$, let $d(x,y)= \sum_{e \in P} \ell(e)$, where $P$ is the \pth{x}{y} in $T$ if both $x,y$ are vertices, and $P$ is the $x$-$y$ (double-)ray in $T$ if one or both of $x,y$ is a \vend. Define $d(x,y)$ for inner points of edges similarly. Clearly, $d(x,y)=d_\ell(x,y)$ if $x,y \in G$. It is straightforward to check that $d$ induces $\mtp$; see \cite[Theorem 3.1]{diestelESST} for more details. Now similarly to the proof of \Tr{finl} one can show that $\mtp$ is the completion of $(\gid{G}, d_\ell)$ using \eqref{univ}.
\end{proof}

A further topology for infinite (\nlf) graphs that might be obtainable as a special case of \ltop\ is the topology of \defi{metric ends}. These are defined similarly to \vend s, only the role of finite vertex separators is now played by separators of finite diameter with respect to the usual edge-counting metric; see \cite{KroenEnds,KrMoMet} for more details. We can thus ask:

\begin{problem}
Does every graph \g admit an assignment \lER\ \st\ the identity on \g induces a bijection between \blg\ and the set of metric ends of \G? If yes, are the corresponding topologies homeomorphic?\footnote{I would like to thank the anonymous referee for proposing this problem.}
\end{problem}

The end compactification \fcg\ of a \lf\ graph \g has allowed the generalisation of many important facts about finite graphs to infinite, \lf\ ones, see \cite{cyclesIntro}. When trying to extend those results further to \nlf\ graphs however, one often has to face the dilemma of which topology to use, as there are several ways to generalise \fcg\ to a \nlf\ graph. In this section we considered two of these ways, namely the spaces \eti\ and \mtp, and ``unified'' them by showing that they are both special cases of \ltop\ (\Tr{finl} and \Tr{mltop}). This unification suggests a solution to the aforementioned dilemma: instead of fixing a topology on the \nlf\ graph, one could try to prove the desired result for all instances of \ltop, or at least for a large subclass of them like e.g.\ the compact ones, which would then lead to corollaries for the specific spaces. This approach will be exemplified in \Sr{sechomi}.

\section{Basic facts about \ltp} \label{basFacts}

Let $G$ be a graph and let \lER\ be fixed throughout this section.

\begin{lemma}	\label{lcomb}
Let \seq{x}, with $x_i\in V(G)$, be a \Cs\ in \ltp. Then, $G$ has a subgraph $R$ such that $R$ is either a comb or a subdivided star, $R$ contains infinitely many vertices in $\{x_i\}$, and $V(R)$ converges to the limit of \seq{x}.
\end{lemma}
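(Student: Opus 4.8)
The plan is to combine the classical structural dichotomy for infinite vertex sets in connected graphs (Lemma~\ref{comb}) with the metric control provided by $d_\ell$. First I would extract from the Cauchy sequence \seq{x} an infinite set $U$ of \emph{distinct} vertices among the $x_i$: if only finitely many distinct vertices occur then the sequence is eventually constant (being Cauchy with distinct vertices at distance bounded below only if repeated), the limit is a vertex, and we may take $R$ to be a trivial subdivided star; so assume $U$ is infinite. We may also pass to a subsequence so that $x_i\to \xi$ with $d_\ell(x_i,\xi)$ strictly decreasing. Apply Lemma~\ref{comb} to $U$ in (a connected component of) $G$: we obtain either a ray $S$ together with infinitely many pairwise disjoint $U$--$S$ paths, which after pruning yields a comb whose teeth lie in $U$, or a subdivided star with infinitely many leaves in $U$. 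In the first case set $R$ to be that comb (spine $S$, together with one $S$--$x_{a_i}$ path for infinitely many $i$, truncated so the teeth are disjoint and end in $U$); in the second case set $R$ to be the subdivided star.

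The remaining work is to show $V(R)$ converges to the limit $\xi$ of \seq{x}. The key point is \emph{finite total length}: since \seq{x} is Cauchy, for every $\eps>0$ all but finitely many $x_i$ lie within $\eps$ of one another, so in particular infinitely many teeth/leaf-paths of $R$ have their endpoint in $U$ confined to a single $d_\ell$-ball of radius $\eps$. In the comb case, consecutive teeth endpoints $t,t'$ in $U$ can be joined through the spine by a path of length $\ge d_\ell(t,t')$; using that the teeth are pairwise disjoint and there are infinitely many of them clustering near $\xi$, one deduces that the relevant portion of the spine, and hence all but finitely many teeth, have small diameter and lie near $\xi$; so every vertex of $R$ outside a finite part is within any prescribed $\eps$ of $\xi$, giving $V(R)\to\xi$. (Here one uses that a ray or comb of infinite total length cannot have all its teeth endpoints in a bounded region while being a subgraph traversed by arcs realising $d_\ell$ approximately — more simply, any two teeth endpoints are joined by an arc in $R$, and the infimum defining $d_\ell$ over arcs of $G$ dominates distances measured inside the subgraph $R$, so small $d_\ell$-distances among infinitely many $x_i$ force the connecting structure in $R$ to be short.) In the star case it is immediate: the centre $c$ of the star satisfies $d_\ell(c,x_{a_i})\le \ell(\text{leaf path to }x_{a_i})$, and since the leaf paths are internally disjoint and \seq{x} is Cauchy, infinitely many of these lengths must tend to $0$, whence $d_\ell(c,\xi)=0$, so $c=\xi$ in $\gid G$ and every vertex of $R$ is within the (shrinking) leaf-path lengths of $\xi$.

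The main obstacle I anticipate is the comb case: controlling the spine. A priori the spine $S$ produced by Lemma~\ref{comb} need not have finite total length, and its vertices need not converge to $\xi$ at all — only the teeth endpoints are tied to \seq{x}. The fix is to re-parametrise: discard the spine and instead work with the teeth endpoints together with the unique arcs in the comb joining consecutive ones; these arcs run along bounded pieces of the spine plus two teeth, and their lengths are squeezed between $d_\ell$ of consecutive $x_i$'s (from below, up to the infimum in the definition) and something we must bound from above. To get the upper bound one shows that, after passing to a suitable subsequence and relabelling the teeth in the order they meet $S$, the sum of lengths of the spine-segments between consecutive chosen teeth is finite, because otherwise the teeth endpoints could not form a Cauchy sequence. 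Concretely, if the spine-segment between the $k$-th and $(k+1)$-st chosen tooth has length $\ge\delta$ for infinitely many $k$, then those teeth endpoints are pairwise at $d_\ell$-distance $\ge$ something bounded below (the arc in $R$ is the cheapest route between them up to the ambient infimum — this needs a small argument, possibly invoking Lemma~\ref{arc} and the fact that in a tree-like subgraph distances are realised), contradicting Cauchyness. Once the spine has finite total length beyond some point, its tail converges, necessarily to $\xi$ since the teeth do, and then $V(R)\to\xi$ follows. This squeezing argument, and making precise ``the arc inside $R$ is essentially the cheapest route,'' is the technical heart of the proof; everything else is routine.
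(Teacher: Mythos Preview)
Your approach has a genuine gap, and it is exactly at the step you yourself flagged as the ``technical heart'': the claim that the arc inside $R$ joining two teeth endpoints is (essentially) the cheapest route between them in $G$ is simply false. The distance $d_\ell$ is an infimum over \emph{all} paths in $G$, not just those in the subgraph $R$ handed to you by Lemma~\ref{comb}. Here is a concrete counterexample. Take vertices $a_0,a_1,\dotsc$ and $b_0,b_1,\dotsc$, with edges $a_ia_{i+1}$ of length $2^{-i}$, edges $b_ib_{i+1}$ of length $1$, and edges $a_ib_i$ of length $1$. The sequence $(a_i)$ is Cauchy, converging to a boundary point $\xi$. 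Applying Lemma~\ref{comb} to $\{a_i\}$ in this graph may perfectly well return the comb with spine $b_0b_1b_2\dotsm$ and teeth $b_ia_i$: every spine-segment has length $1$, yet the teeth endpoints satisfy $d_\ell(a_k,a_{k+1})=2^{-k}\to 0$. So your squeezing argument fails, and $V(R)$ does not converge to $\xi$ (the $b_i$ stay at distance $1$ from $\xi$ and are not even Cauchy among themselves). The same phenomenon breaks the star case: leaf-paths produced by Lemma~\ref{comb} applied to all of $G$ need not be short just because their endpoints are $d_\ell$-close.

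The missing idea is to gain metric control \emph{before} invoking Lemma~\ref{comb}, not after. The paper does this in one line: for each $i$ choose a finite \pth{x_{i-1}}{x_i} $R_i$ with $\ell(R_i)<d_\ell(x_{i-1},x_i)+\eps 2^{-i}$, and set $R':=\bigcup_i R_i$. Now every vertex $v\in R_i$ satisfies $d_\ell(v,x_i)<d_\ell(x_{i-1},x_i)+\eps 2^{-i}$, so $V(R')$ already converges to $\xi$. Only then apply Lemma~\ref{comb} to $\{x_i\}$ inside the connected graph $R'$; whatever comb or star it returns is a subgraph of $R'$, hence its vertex set converges to $\xi$ automatically. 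No analysis of spine lengths or ``cheapest routes'' is needed at all.
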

\begin{proof}
Fix an $\eps\in \R$, $\eps>0$, and for every $i\geq 1$ let $R_i$ be a finite \pth{x_i}{x_{i-1}}\ with $\ell(R_i)< d_\ell(x_i,x_{i-1}) + \eps 2^{-i}$; such a path exists \btdo\ $d_\ell$. The subgraph $R':=\bigcup R_i$ of $G$ is connected, and applying \Lr{comb} to $\{x_i |\iin\}$ and $R'$ yields a subgraph $R$ of $R'$ which is either a comb or a subdivided star and contains infinitely many vertices in $\{x_i\}$. To see that $V(R)$ converges to the limit of \seq{x}, note that for any vertex $v\in R_i$ we have $d_\ell(v,x_i)< d_\ell(x_i,x_{i-1}) + \eps 2^{-i}$ and recall that \seq{x} is a \Cs.
\end{proof}


An $x$-$y$~geodesic in a metric space $X$ is a map $\tau$ from a closed interval $[0,l]\subset \R$ to $X$ such that $\tau(0)=x$, $\tau(l)=y$, and $d(\tau(t),\tau(t'))=|t-t'|$ for all $t,t' \in [0,l]$. If there is an $x$-$y$~geodesic \fe\ two points $x,y\in X$, then we call $X$ a \defi{\gms}.

In general, \ltp\ is not a \gms\ as shown by the graph in \fig{ladder}. In this space, the two \bp s $x,y$ have distance $3$, but there is no $\arc{x}{y}$ of length $3$. This example can be modified to obtain a space in which there are two vertices connected by no geodesic. 

\showFig{ladder}{In this example there is no geodesic connecting the \bp s $x,y$ in \ltp.}

However, \ltp\ is always a \gms\ if it is compact:

\begin{theorem}\label{thgeo}
If \ltp\ is compact then it is a \gms.
\end{theorem}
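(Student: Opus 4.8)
The plan is to show directly that any two points $x,y\in\ltp$ are joined by a geodesic, obtaining it as a limit of rescaled arc-length parametrisations of approximating arcs in $G$ and exploiting the compactness of $\ltp$ through the Arzel\`a--Ascoli theorem. Fix $x,y$, write $D:=d_\ell(x,y)$, and assume $D>0$ (for $x=y$ the constant map on $[0,0]$ is a geodesic).

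First I would produce the approximating arcs. Since $\gid G$ is dense in its completion $\ltp$, choose $a_i,b_i\in\gid G$ with $a_i\to x$ and $b_i\to y$; as $d_\ell$ is continuous this gives $d_\ell(a_i,b_i)\to D$. By the definition of $d_\ell$ on the $1$-complex $G$, for each $i$ there is an \arc{a_i}{b_i}\ $A_i$ in $G$ with $L_i:=\ell(A_i)<d_\ell(a_i,b_i)+1/i$, so that $L_i\to D$. Parametrising $A_i$ by arc length --- using the isometric identifications of its edges and its two terminal half-edges with real intervals of the corresponding lengths --- yields a map $\tau_i\colon[0,L_i]\to\ltp$ with $\tau_i(0)=a_i$ and $\tau_i(L_i)=b_i$; since the subarc of $A_i$ between $\tau_i(s)$ and $\tau_i(t)$ has length $|s-t|$, we have $d_\ell(\tau_i(s),\tau_i(t))\le|s-t|$, so each $\tau_i$ is $1$-Lipschitz. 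Rescaling to a common domain, put $\sigma_i\colon[0,D]\to\ltp$, $\sigma_i(t):=\tau_i(tL_i/D)$; then $\sigma_i$ is $(L_i/D)$-Lipschitz with $\sigma_i(0)=a_i$, $\sigma_i(D)=b_i$, and since $L_i/D\to1$ the family $(\sigma_i)_{i\in\N}$ is equicontinuous (all $2$-Lipschitz, say, once $i$ is large).

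Next comes the limit step. As $\ltp$ is compact by hypothesis and $[0,D]$ is compact, the Arzel\`a--Ascoli theorem yields a subsequence of $(\sigma_i)$ converging uniformly to a continuous $\sigma\colon[0,D]\to\ltp$; being a uniform limit of $(L_i/D)$-Lipschitz maps with $L_i/D\to1$, $\sigma$ is $1$-Lipschitz, and $\sigma(0)=\lim a_i=x$, $\sigma(D)=\lim b_i=y$. It remains to check $\sigma$ is a geodesic. For $0\le s\le t\le D$, the triangle inequality together with the Lipschitz bounds $d_\ell(\sigma(0),\sigma(s))\le s$ and $d_\ell(\sigma(t),\sigma(D))\le D-t$ gives
\begin{align*}
D=d_\ell(\sigma(0),\sigma(D)) &\le d_\ell(\sigma(0),\sigma(s))+d_\ell(\sigma(s),\sigma(t))+d_\ell(\sigma(t),\sigma(D))\\
&\le s+d_\ell(\sigma(s),\sigma(t))+(D-t),
\end{align*}
whence $d_\ell(\sigma(s),\sigma(t))\ge t-s$; combined with $d_\ell(\sigma(s),\sigma(t))\le|t-s|$ this forces $d_\ell(\sigma(s),\sigma(t))=|t-s|$. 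Hence $\sigma$ is an $x$--$y$ geodesic, and since $x,y$ were arbitrary, $\ltp$ is a \gms.

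The content here is essentially bookkeeping rather than conceptual. One must verify that the arc-length parametrisation of an arc in the $1$-complex $G$ is $1$-Lipschitz as a map into $\ltp$ --- immediate from the way $d_\ell$ is built from the edge lengths together with the isometric embedding $\gid G\hookrightarrow\ltp$ --- and that the hypotheses of Arzel\`a--Ascoli hold; here the compactness of $\ltp$ assumed in the statement is precisely what supplies relative compactness of the target, and it is also what keeps the limit path inside $\ltp$. I expect the only point needing a little care to be the passage to the limit: one should make sure the $\sigma_i$ are \emph{uniformly} Lipschitz, not merely equicontinuous in some weaker sense, so that $\sigma$ really is $1$-Lipschitz and the triangle-inequality squeeze applies. (Alternatively one could invoke a Hopf--Rinow-type theorem, since a compact $\ltp$ is a complete, proper length space; but proving that $\ltp$ is a length space requires the same approximation argument, so little is gained.) Note, finally, that the argument in particular yields that a compact $\ltp$ is path-connected --- not obvious \emph{a priori}, as the points of $\blg$ are only reached in the completed metric and, without compactness, need not lie on any arc of $\ltp$.
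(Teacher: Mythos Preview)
Your argument is correct, but the paper takes a different and somewhat shorter route. Instead of constructing the whole geodesic as a uniform limit via Arzel\`a--Ascoli, the paper invokes the standard fact that a complete metric space is geodesic as soon as it has \emph{midpoints}: for every $x,y$ there is a $z$ with $d(x,z)=d(z,y)=\tfrac12 d(x,y)$. To produce such a $z$ the paper picks, as you do, approximating finite paths $P_i$ in $G$ with endpoints converging to $x$ and $y$ and with $\ell(P_i)\to d_\ell(x,y)$, takes the midpoint $p_i$ of each $P_i$, and lets $z$ be any accumulation point of $(p_i)$ in the compact space $\ltp$; a short check gives $d_\ell(x,z)=d_\ell(z,y)=\tfrac12 d_\ell(x,y)$.

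What each approach buys: the paper's proof is more elementary---a single sequential-compactness argument in $\ltp$ replaces your appeal to Arzel\`a--Ascoli---but it outsources the actual construction of the geodesic to the midpoints lemma (a nested bisection plus completeness). Your proof does all the work in one pass and hands you the geodesic explicitly as a uniform limit of reparametrised arcs; in particular, your triangle-inequality squeeze shows directly that the limit is an isometric embedding. Your parenthetical remark that one could instead go via a Hopf--Rinow-type theorem is essentially what the paper does, in the midpoints formulation.
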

\begin{proof}
It is an easy and well-known fact that a complete metric space is geodesic if it has ``midpoints'', that is, if for any two points $x,y$ in the space there is a point $z$ so that $d(x,z)=d(z,y)=\frac{1}{2}d(x,y)$ holds. Let us show that \ltp\ does have midpoints if it is compact.

Pick $x,y\in \ltp$, and let $h:=d_\ell(x,y)$. Choose a sequence \seq{P}\ of finite paths in \g such that if $x_i, y_i$ are the endvertices of $P_i$ then the sequence \seq{x}\ converges to $x$,  the sequence \seq{y}\ converges to $y$, and $\lim \ell(P_i)=h$; such a sequence \seq{P}\ exists by the definition of \ltp. Now for every $i$, consider $P_i$ as a topological path in the 1-complex \G, and let $p_i$ be the midpoint of $P_i$, that is, the point on $P_i$ satisfying $ \ell(x_i P_i p_i) = \ell(p_i P_i y_i)$. As by our assumption \ltp\ is compact, the sequence \seq{p}\ has an accumulation point $z$, and it is easy to check that $d_\ell(x,z)=d_\ell(z,y)=\frac{1}{2}d(x,y)$ as desired.
\end{proof}

Next, we are going to prove two results that are needed in \cite{AgCurrents} but might be of independent interest. 

For the following two lemmas suppose \g is countable, fix an enumeration $e_1,e_2,\ldots$ of $E(G)$, and let $E_n:=\{e_1,\ldots, e_n\}$. Moreover, let $\kreis{e_n}$ denote the set of inner points of the edge $e_n$, and let $\kreis{E}_n:= \{\kreis{e_1},\ldots, \kreis{e_n}\}$.

\begin{lemma} \label{epsNLF} 
Let $C$ be a circle or arc in \ltp\ such that $E(C)$ is dense in $C$. Then, for every $\epsilon\in \R_+$ there is an $n\in\N$ such that for every subarc of $C$ in $\ltpn$ connecting two vertices $v,w$ we have  $d_\ell(v,w)<\epsilon$.
\end{lemma}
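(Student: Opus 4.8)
The plan is to argue by contradiction, using the compactness of $\ltp$ (which holds because $\ltp$ contains the circle/arc $C$ as a closed subset... actually, more carefully, the relevant compactness comes from the fact that $C$ itself, being a circle or arc, is compact). So suppose the conclusion fails for some $\epsilon>0$: then for every $n\in\N$ there is a subarc $A_n$ of $C$ lying entirely in $\ltpn = \ltp\sm\kreis{E_n}$ whose endpoints $v_n,w_n$ are vertices with $d_\ell(v_n,w_n)\ge\epsilon$. The idea is that, since $A_n$ avoids the first $n$ edges, as $n\to\infty$ these arcs are forced to ``escape'' toward the boundary $\blg$, but an arc between two vertices that are a fixed distance $\epsilon$ apart cannot be squeezed arbitrarily close to the boundary — this should contradict either compactness or the density of $E(C)$ in $C$.

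First I would extract convergent subsequences: by compactness of $C$, passing to a subsequence we may assume $v_n\to x$ and $w_n\to y$ for some $x,y\in C$. Each $A_n$ is a subarc of $C$, so it is one of the (at most two, if $C$ is a circle) arcs of $C$ between $v_n$ and $w_n$; the endpoints converge, so the arcs $A_n$ converge (in the Hausdorff sense on the compact space $C$) to one of the arcs of $C$ between $x$ and $y$, call it $A$. Since $d_\ell(v_n,w_n)\ge\epsilon$ for all $n$, we get $d_\ell(x,y)\ge\epsilon$, so in particular $x\ne y$ and $A$ is a nondegenerate arc. Now here is the key point: $A$ contains an edge. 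Indeed $E(C)$ is dense in $C$, and $A$ has nonempty interior in $C$, so $A$ contains (a point of) some edge $e_m$; shrinking to an inner point we may assume $A$ meets $\kreis{e_m}$, and in fact $A$ contains a whole subinterval of $\kreis{e_m}$ around that point, since the only way $A$ could meet $\kreis{e_m}$ in a single point is if that point is an endpoint of $A$, i.e. $x$ or $y$ — but $x,y$ are limits of vertices, hence vertices or boundary points, not inner edge points. So $A$ contains a genuine open sub-half-edge $J\subseteq\kreis{e_m}$.

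Now I derive the contradiction. Since $A_n\to A$ and $A$ contains the open set (in $C$) $J\subseteq\kreis{e_m}$, for all large $n$ the arc $A_n$ meets $\kreis{e_m}$ — more precisely, $A_n\cap\kreis{e_m}\ne\emptyset$. (This is the step that needs the convergence $A_n\to A$ to be strong enough; I would set it up so that $A_n$ contains points arbitrarily close to an interior point of $J$, and since $\kreis{e_m}$ is open in $\ltp$, such points lie in $\kreis{e_m}$.) But for $n\ge m$ we have $A_n\subseteq\ltpn\subseteq\ltp\sm\kreis{e_m}$, so $A_n\cap\kreis{e_m}=\emptyset$ — contradiction. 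Hence the conclusion holds.

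The main obstacle I expect is making the Hausdorff-convergence argument $A_n\to A$ precise and legitimate: subarcs of a fixed circle/arc with converging endpoints do converge, but one must be careful when $C$ is a circle (two choices of arc between $v_n$ and $w_n$) and when $x=y$ — I have argued $x\ne y$ above, which rules out the degenerate case, and for the two-arc ambiguity one passes to a further subsequence so that all $A_n$ are ``on the same side.'' The other delicate point is the claim that the limiting arc $A$ contains a full open sub-half-edge rather than merely touching an edge at one point; the density of $E(C)$ in $C$ together with $A$ having nonempty interior in $C$ handles this, but the argument that $A_n$ then eventually enters $\kreis{e_m}$ should be stated carefully using that $\kreis{e_m}$ is open in $\ltp$.
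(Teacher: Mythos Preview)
Your proposal is correct and follows essentially the same route as the paper: argue by contradiction, extract subsequences so that $v_n\to x$ and $w_n\to y$ with $x\ne y$, use density of $E(C)$ to find an inner edge-point $z$ on the arc of $C$ between $x$ and $y$, and derive a contradiction since $A_n$ must eventually contain $z$. The only cosmetic difference is that the paper bypasses Hausdorff convergence by observing directly that for large $m$ the endpoints $v_m,w_m$ lie in different components of $C\setminus\{z\}$ (in the arc case; the circle case is declared similar), which forces $z\in A_m$.
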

\begin{proof}

We will only consider the case when $C$ is an arc; the case when $C$ is a circle is similar. Suppose, on the contrary, there is an $\epsilon$ such that for every $n\in\N$ there is a subarc $R_n$ of $C$ in \ltpn\ 
the endvertices of which have distance at least $\epsilon$. Let $x_n,y_n$ be the first and last vertex of $R_n$ respectively along $C$. 

As $C$ is compact, the sequence \seq{x} has a convergent subsequence \sseq{x}{I} with limit $x$ say, and the corresponding subsequence \sseq{y}{I} of \seq{y} also has a converging subsequence \sseq{y}{J}, $J\subseteq I$, with limit $y$ say. Since $d_\ell(x_i,y_i)\geq \eps$ we have $d_\ell(x,y)\geq \eps$, in particular $x\neq y$. Note that $C$ must contain the points $x,y$ because it is closed. But as $E(C)$ is dense in $C$, $xCy$ must meet some edge $e_n$ at an inner point $z$ say. It is easy to see that if $m$ is large enough then $x_m$ and $y_m$ lie in distinct components of $C\sm \{z\}$. Thus for such an $m$ the subarc $R_m=x_m C y_m$ contains $z$, and as we can choose $m$ to be larger than $n$ this contradicts the fact that $R_m$ avoids $\kreis{e_n}$.
\end{proof}


\begin{lemma}\label{lcisle}
If $\sum_{e\in E(G)} \ell(e)<\infty$ then for every circle or arc $C$ in \ltp\ we have  $\ell(C)=\sum_{e\in E(C)} \ell(e)$.
\end{lemma}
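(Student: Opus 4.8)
The plan is to show both inequalities $\ell(C) \geq \sum_{e\in E(C)}\ell(e)$ and $\ell(C)\leq \sum_{e\in E(C)}\ell(e)$. The first is essentially trivial: by the definition of length in \Sr{defsTop}, the length of any arc is the supremum over finite point sequences of the sum of consecutive distances, and picking sequences that run along the edges of $C$ already shows $\ell(C)$ is at least the total edge length traversed. More precisely, for any finite $F\subseteq E(C)$, a surjective \clex-like parametrization $\sigma:[0,1]\to C$ passes through the endpoints of each $e\in F$ in some order; choosing the $s_i$ to be the preimages of these endpoints gives $\ell(S)\geq \sum_{e\in F}\ell(e)$, since $d_\ell(a,b)=\ell(e)$ for the endpoints $a,b$ of an edge $e$ (the edge itself is an \arc{a}{b} realizing the distance, as its length is $\ell(e)$ and no shorter arc exists — here one uses that the edge is isometrically embedded). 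Taking the supremum over $F$ yields $\ell(C)\geq \sum_{e\in E(C)}\ell(e)$.

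**The reverse inequality — the main point.** For $\ell(C)\leq \sum_{e\in E(C)}\ell(e)$ the key structural fact I would establish first is that $E(C)$ is dense in $C$: a circle or arc $C$ in \ltp\ can contain at most countably many edges, and I claim the \bp s of \ltp\ lying on $C$ cannot form a set with nonempty interior in $C$. The reason is that $\pi(\gid G)$ is dense in \ltp\ by construction, and every point of $\pi(\gid G)$ lies either in the interior of an edge or is (an equivalence class of) a vertex; but vertices have degree-related constraints inside an arc/circle and, more to the point, any nontrivial subarc of $C$ avoiding all edge-interiors would be an arc consisting entirely of \bp s together with vertices, which by local finiteness considerations near vertices forces it to be degenerate — so the complement of $\overline{E(C)}$ in $C$ is open with empty interior, hence $E(C)$ is dense. (Alternatively, and more cleanly, one can argue directly: if some subarc $C'$ of $C$ contained no edge, then $C'$ would have length $0$ by the argument below applied with an empty edge set, forcing $C'$ to be a single point.) Granting density of $E(C)$, I can invoke \Lr{epsNLF}: given $\eps>0$, there is $n$ such that every subarc of $C$ inside $\ltpn$ joining two vertices has $d_\ell$-diameter less than $\eps$.

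**Finishing via a finite approximation.** Now fix $\eps>0$ and the corresponding $n$ from \Lr{epsNLF}. Parametrize $C$ (say as an arc) by a \njp-type map $\tau:[0,\ell(C)]\to C$ that is injective on the interior; its image decomposes along the finitely many edges $e_1,\dots,e_n$ that it traverses into finitely many maximal closed subarcs, alternating between "edge pieces" lying in some $e_i$ ($i\le n$) and "connecting pieces" lying in $\ltpn$. Each edge piece contributes exactly its edge length (or half-edge length) to $\ell(C)$, summing to at most $\sum_{e\in E(C)}\ell(e)$. Each connecting piece is an arc in $\ltpn$; after possibly shrinking it slightly to run between two vertices (using that $\sum_{e\in E(G)}\ell(e)<\infty$, the tail edges contribute arbitrarily little, exactly as in the completeness argument in the proof of \Tr{finl}), \Lr{epsNLF} bounds its length — one must be a little careful, since \Lr{epsNLF} bounds the $d_\ell$-distance of the endpoints, not the length, so I would instead sum: the connecting pieces, being internally disjoint and each mapped into $\ltpn$, have lengths summing to at most $\sum_{e\notin E_n}\ell(e)$ plus an error term, since by \Lr{lcisle}'s own statement applied inductively — no, better: each connecting piece has length equal to the sum of the (half-)edge lengths it meets among $e_{n+1},e_{n+2},\dots$ by the inductive/limiting structure, and these are globally disjoint across pieces, so they sum to at most $\sum_{i>n}\ell(e_i)$, which is $<\eps$ for $n$ large by summability. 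Hence $\ell(C)\leq \sum_{e\in E(C)}\ell(e) + \sum_{i>n}\ell(e_i) \leq \sum_{e\in E(C)}\ell(e)+\eps$, and letting $\eps\to 0$ gives the claim. The circle case is identical. \textbf{The main obstacle} is making the decomposition into edge pieces and connecting pieces rigorous when $C$ may wind through infinitely many edges with complicated accumulation — one needs that only finitely many edges among $e_1,\dots,e_n$ are met, which is automatic, and that the "connecting pieces" genuinely lie in $\ltpn$ and genuinely only pick up edges $e_i$ with $i>n$; this last point is where \Lr{epsNLF} (via density of $E(C)$) does the real work, and combining it with the summability control on tail edges exactly as in \Tr{finl} is the crux.
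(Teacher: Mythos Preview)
Your argument for $\ell(C)\geq \sum_{e\in E(C)}\ell(e)$ is essentially fine (though the claim $d_\ell(a,b)=\ell(e)$ for the endpoints of an edge $e$ is not true in general --- there may be shortcuts; what you really need is that the subarc of $C$ consisting of $e$ has length $\ell(e)$, which follows by taking fine partitions of $e$).

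The reverse inequality, however, has a genuine gap. At the decisive point you assert that ``each connecting piece has length equal to the sum of the (half-)edge lengths it meets among $e_{n+1},e_{n+2},\dots$''. But this is exactly the statement of the lemma applied to the connecting piece, which is itself an arc in \ltp. You notice the circularity yourself (``by \Lr{lcisle}'s own statement applied inductively --- no, better:\ldots''), yet the alternative you offer does not escape it. \Lr{epsNLF} only bounds $d_\ell$ between the \emph{endpoints} of such a piece, not its length; and as \Er{ant} demonstrates, the length of an arc can strictly exceed the sum of the lengths of the edges it contains. So summability has to enter in a more substantive way than merely making the tail $\sum_{i>n}\ell(e_i)$ small --- your decomposition never actually uses it to control \emph{length}.

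The paper sidesteps this by not trying to bound $\ell(C)$ via a decomposition at all. It reduces instead to showing $d_\ell(x,y)\leq \sum_{e\in E(xCy)}\ell(e)$ for every pair $x,y\in C$, from which the length bound follows by the definition of $\ell(C)$. To prove this, it constructs finite \pths{x_i}{y_i}\ $P_i$ in $G$ with $x_i\to x$, $y_i\to y$: at stage $i$ one replaces each maximal subarc of $xCy$ in $\ltpr{r_i}$ by a short finite path $P_A$. The key finesse is to choose $r_i$ (via \Lr{epsNLF}) so large that each $P_A$ is shorter than every edge appearing in any previous $P_j$; this forces the ``excess'' edge-sets $E(P_i)\setminus E(xCy)$ to be pairwise disjoint across $i$. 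Summability of $\ell$ then yields $\sum_{e\in E(P_i)\setminus E(xCy)}\ell(e)\to 0$, whence $d_\ell(x,y)\leq \sum_{e\in E(xCy)}\ell(e)$. This is where the hypothesis $\sum\ell(e)<\infty$ really does its work, and it is the idea missing from your attempt.
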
 

Before proving this, let us see an example showing that the requirement \finl\ in \Lr{lcisle} is necessary, since without it $\ell(C)$ might be strictly greater than $\sum_{e\in E( C)} \ell(e)$ even if $E(C)$ is dense in $C$. In fact, this situation can occur even if $\ltp \eqT \eti$ ---which is always the case if \finl\ by \Lr{finl}. The existence of a circle $C$ in \ltp\ such that 
$E(C)$ is dense in $C$ but still $\ell(C) > \sum_{e\in E( C)} \ell(e)$ might look surprising at first glance, but perhaps less so bearing in mind that it is possible to construct a dense set of non-trivial subintervals of the real unit interval of arbitrarily small total length (e.g.\ imitating the construction of the Cantor set but removing shorter intervals).

\example{ant}{
\normalfont
Let \g be the graph of \fig{antares}, which is the same as the graph of \fig{wild}, and let every thin edge in the $i$-th level have length $c2^{-i}$ for some fixed $c\in \R_{>0}$ (in this example we chose $c=1$). Moreover, assign lengths to the thick edges in such a way that the sum $s$ of the lengths of all thick edges 
is finite ($s=1\half$ in this example). Here we have, as an exception, allowed some edges to have length 0, but this can be avoided by contracting those edges. It is not hard to check that for this assignment $\ell$ we have $\ltp \eqT \fcg$. Furthermore, it is not hard to check that the length of the wild circle $W$ is (at least) $s + c$, which is greater than $\sum_{e\in E(W)} \ell(e)=s$ (in this example, the length of the outer double ray $L$ is 1 and the length of $W\sm L$ is $1\half$). To see this, note for example that the two ends of the outer double ray $L$ have distance $\min(c,\ell(L))$, since every finite path $P$ connecting the leftmost ray to the rightmost one has length at least $c$ (use induction on the highest level that $P$ meets). 

\showFig{antares}{The length of the wild circle $W$ here is greater than the sum of the lengths of the edges in it.}

} 

We now proceed to the proof of \Lr{lcisle}.

\begin{proof}[Proof of \Lr{lcisle}]
By \Tr{finl} we have $\ltp \eqT \eti$, and this easily implies that $E(C)$ is dense in $C$. \afsubm{simplified}
By the definition of $\ell(C)$ it suffices to show that 
\labtequ{xcyle}{$d_\ell(x,y)\leq \sum_{e \in E( xCy)} \ell(e)$}
holds for every pair $x,y$ of points on $C$. 
To show that \eqref{xcyle} holds, we will construct a sequence $\seq{P}$ of finite paths in $G$ such that the endpoints $x_i, y_i$ of $P_i$ give rise to sequences converging to $x,y$ respectively, and for the sequence $\delta_i:=\sum_{e\in E(P_i)\setminus E(xCy)} \ell(e)$ we have $\lim \delta_i=0$. This would easily imply $d_\ell(x,y)\leq \sum_{e\in E(xCy)} \ell(e)$ by the definition of $d_\ell$.

To begin with, pick an $r_0\in \N$ and let $P_0$ be any path in $G$. Then, for every $i=1,2,\ldots$, pick an $r_i>r_{i-1}$ large enough that the 
distance (in \ltp) between the endvertices of any subarc of $xCy$ in $\ltpr{r_i}$ is less than the length $h_{i-1}$  of the shortest edge in $\bigcup_{j<i}  P_{j}$; such an $r_i$ exists by \Lr{epsNLF}.

In order to define $P_i$, let $x_i$ be the first vertex of $xCy$ incident with $E_{r_i}$ and let $y_i$ be the last vertex of $xCy$ incident with $E_{r_i}$; such vertices exist because $xCy$ is a topological path and there are only finitely many vertices incident with $E_{r_i}$. Since $E(C)$ is dense in $C$, and since $\lim r_i=\infty$, it is easy to see that $x_i$ converges to $x$ and $y_i$ converges to $y$. Now for every maximal subarc $A$ of $x_iCy_i$ in $\ltpr{r_i}$ choose a finite \pth{w}{v} $P_A$, where $w,v$ are the  endpoints of $A$, 
\st\ $\ell(P_A) < h_{i-1}$; such a path $P_A$ exists since by the choice of $r_i$ we have $d_\ell(w,v) < h_{i-1}$. Then, replace $A$ in $x_iCy_i$ by the path $P_A$. Note that there are only finitely many such arcs $A$ as $E_{r_i}$ is finite. Performing this replacement for every such subarc $A$ we obtain a (finite) $x_i$-$y_i$~walk $P'_i$; let $P_i$ be an $x_i$-$y_i$~path with edges in $P'_i$.

By the choice of the paths $P_A$ we obtain that the edge-sets $E(P_i)\setminus E(xCy)$ are pairwise disjoint, 
and as $\sum_{e\in E(G)} \ell(e)<\infty$ this implies  $\lim \delta_i=0$ as required.
\end{proof}

\note{
	\Lr{lcisle} can be generalised to topological paths. The situation is slightly more complicated in this case though, since the length of a topological path does not only depend on its image, but increases whenever the path traverses some part of its image `back and forth'. Thus, in order to state the generalisation, we introduce the following concept that counts how often a given edge is traversed by a topological path. 

For a topological path $\sigma$ and an edge $e$, let $\ci$ be the set of maximal subpaths $I$ of $\sigma$ such that the image of $I$ is contained in $e$. Then, define the \defi{multiplicity} $m_\sigma(e)$ of $e$ in $\sigma$ by $m_\sigma(e):=\sum_{I\in \ci} \ell(I)$.

\begin{corollary}\label{lpisle}
If $\sum_{e\in E(G)} \ell(e)<\infty$ then for every topological path $\sigma:\unin\to\ltp$ such that $E(\sigma)$ is dense in the image of any subpath of $\sigma$ we have $\ell(\sigma)=\sum_{e\in E(G)} m_\sigma(e)$.
\end{corollary}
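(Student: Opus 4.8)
The plan is to establish the two inequalities $\ell(\sigma)\ge\sum_{e}m_\sigma(e)$ and $\ell(\sigma)\le\sum_e m_\sigma(e)$ separately, as an identity in $[0,\infty]$; throughout I read $\ell(I)$ for $I\in\ci$ as the length of the topological path $\sigma\restr I$ in the sense of \Sr{defsTop}.

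For the lower bound I would argue as follows. Ranging over all edges $e$, the members of the families $\ci$ are subintervals of $\unin$ which, after trimming degenerate overlaps (where $\sigma$ sits at a vertex common to two edges, contributing nothing to any length), have pairwise disjoint interiors. Since the length of a topological path is superadditive over a finite family of subintervals with disjoint interiors --- immediate from the supremum definition, by merging partitions of the pieces into one partition of $\unin$ --- one gets $\ell(\sigma)\ge\sum_{I\in\mathcal F}\ell(\sigma\restr I)$ for every finite subfamily $\mathcal F$, and letting $\mathcal F$ exhaust $\bigcup_e\ci$ gives $\ell(\sigma)\ge\sum_e\sum_{I\in\ci}\ell(I)=\sum_e m_\sigma(e)$.

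The upper bound is the substantial direction. For a subinterval $J\subseteq\unin$ put $\mu(J):=\sum_e m_{\sigma\restr J}(e)$, the total edge-length, counted with multiplicity, swept out by $\sigma\restr J$. I would first check that $\mu$ is additive over adjacent intervals, $\mu([s,s''])=\mu([s,s'])+\mu([s',s''])$ for $s<s'<s''$, since cutting at $s'$ subdivides at most one member of each $\ci(e)$ and the length of a path splits additively at a cut point; hence $\sum_i\mu([s_{i-1},s_i])=\mu(\unin)=\sum_e m_\sigma(e)$ for every partition $0=s_0<\dots<s_k=1$ of $\unin$. It therefore suffices to prove the pointwise bound $d_\ell(\sigma(s),\sigma(s'))\le\mu([s,s'])$ for all $s<s'$: summing it along a partition and taking the supremum over partitions yields $\ell(\sigma)\le\sum_e m_\sigma(e)$. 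For this, write $\tau:=\sigma\restr[s,s']$ and assume $\mu([s,s'])<\infty$. By \Lr{arc} the image of $\tau$ contains an arc $A$ with endpoints $\sigma(s),\sigma(s')$, so $d_\ell(\sigma(s),\sigma(s'))\le\ell(A)$, and by \Lr{lcisle} (recall $\sum_e\ell(e)<\infty$) $\ell(A)=\sum_{e\in E(A)}\ell(e)$. I claim $m_\tau(e)\ge\ell(e)$ for each $e\in E(A)$; granting this, $\ell(A)\le\sum_{e\in E(A)}m_\tau(e)\le\sum_e m_\tau(e)=\mu([s,s'])$, which finishes the upper bound. To see the claim, note that $e\in E(A)$ means the whole edge $e$ lies in $A\subseteq\operatorname{im}\tau$, so every inner point of $e$ is hit by $\tau$; thus the images $\tau(I)$, with $I$ ranging over the maximal subpaths of $\tau$ whose image lies in $e$, are subarcs of $e$ covering $\kreis e$. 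A family of subarcs covering an arc of length $\ell(e)$ (the arc $e$ itself has length $\ell(e)$ by \Lr{lcisle}) has total length at least $\ell(e)$, and the length of a path is at least that of the arc it covers, whence $m_\tau(e)=\sum_I\ell(\sigma\restr I)\ge\sum_I\ell(\tau(I))\ge\ell(e)$.

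The step I expect to be the main obstacle --- and the one I would write out most carefully --- is this last inequality chain, i.e.\ that an edge traced in full by $\tau$ contributes at least $\ell(e)$ to $\mu$; it rests on two standard-but-not-quite-trivial facts about lengths in metric spaces (subarcs covering an arc have total length at least its length, via the arc-length reparametrisation that exists for finite-length arcs; and the length of a topological path dominates the length of its image arc). The density hypothesis on $\sigma$ is exactly what makes the scheme work: it forbids $\sigma$ from accruing positive length inside $\blg$, where there are no edges and the identity would fail, and it is what ensures that the arcs handed to us by \Lr{arc}, together with the edges to which \Lr{lcisle} assigns their lengths, are genuinely traced out by $\tau$. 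As a housekeeping matter I would also record the standing assumption that $G$ is countable (so that \Lr{lcisle} applies) and dispose of length-zero edges by the contraction convention already used in the section.
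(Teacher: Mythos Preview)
Your proposal is correct and follows essentially the same approach as the paper's sketch: reduce the upper bound on $\ell(\sigma)$ to a pointwise estimate $d_\ell(\sigma(s),\sigma(s'))\le\mu([s,s'])$, obtain an arc in the image of each subpath via \Lr{arc}, and then appeal to \Lr{lcisle} to express the arc's length as a sum of edge lengths that the multiplicities $m_\tau$ dominate. Your write-up is considerably more detailed than the paper's two-line sketch (which also invokes the commented-out \Lr{lengthPath}); the one point you should tidy up is that \Lr{lcisle} as stated speaks of full edges $E(A)$, so either restrict your partition points to preimages of vertices (dense by hypothesis) or note that the half-edges at the ends of $A$ are accounted for by $m_\tau$ as well.
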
 
\begin{proof} (sketch)
Let $x,y$ be any two points in \unin. By \Lr{arc}, the image of $x\sigma y$ contains an \arc{x}{y} $R$, and by \Lr{lcisle} $\ell(E(R))\geq \ell(R) \geq \ell(x\sigma y)$. The result now follows from \Lr{lengthPath} as in the proof of \Lr{lcisle}.
\end{proof}
}

For $\oo\in \OO(G)$ denote by \defi{\blom} the set 
$$\{x \in \blg \mid \text{ there is an \oo-ray that converges to $x$ in \ltp} \}.$$
(Recall that $\blg$ is the set of boundary points of \ltp.)
For the rest of this section assume \g to be \lf. Note that by \Lr{lcomb} every point in \blg\ lies in \blom\ for some end \oo. It has been proved that if \g is a hyperbolic graph and \ltp\ is its hyperbolic compactification (see \Sr{secHyp}), then \blom\ is a connected subspace of \ltp\ for every $\oo\in\OO(G)$ \cite[Chapter 7 Proposition 17]{GhHaSur}. The following proposition generalises this fact to an arbitrary compact \ltp.

\begin{theorem}\label{connbound}
If \g is \lf\ and \ltp\ is compact then \blom\ is connected for every $\oo\in\OO$.
\end{theorem}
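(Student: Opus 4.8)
The plan is to identify \blom\ with a decreasing intersection of continua, which is automatically connected. Fix $\oo\in\OO$. Since \g is connected and \lf, it is countable; fix an exhaustion $S_1\subseteq S_2\subseteq\cdots$ of $V(G)$ by finite sets with union $V(G)$, let $C_n:=C(S_n,\oo)$ be the unique component of $G-S_n$ containing a tail of every \oo-ray, let $D_n$ be the closure of $C_n$ in \ltp, and set $D:=\bigcap_n D_n$. Each $C_n$ is a nonempty connected subgraph of \g, so $D_n$ is a continuum (closed, hence compact, in the compact space \ltp, and connected), and since $S_n\subseteq S_{n+1}$ forces $C_{n+1}\subseteq C_n$ the $D_n$ form a decreasing chain; by the standard fact that a nested intersection of continua is a continuum, $D$ is nonempty and connected. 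It therefore suffices to prove $\blom=D$.

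The inclusion $\blom\subseteq D$ is immediate, as an \oo-ray converging to $x$ has a tail in every $C_n$. For the reverse I would first check $D\subseteq\blg$, i.e.\ that $D$ contains no point of \g: once $S_n$ contains the (finitely many, by local finiteness) neighbours of a vertex $v$, the subgraph $C_n$ misses the open ball about $v$ of radius $\min\{\ell(e):e\ni v\}>0$, so $v\notin D_n$; applying the same argument to the two endpoints of an edge disposes of interior points of edges. Hence every point of $D$ is a boundary point of \g.

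The main step is $D\subseteq\blom$, and here compactness is used in an essential way. First I would note two consequences of \ltp\ being compact, hence totally bounded: for every $\delta>0$ only finitely many edges have length $\ge\delta$ (otherwise the midpoints of infinitely many of them would form an infinite $\delta/2$-separated subset of \ltp), and therefore $\eta_n:=\sup\{\ell(e):e\in E(C_n)\}\to 0$, since every fixed edge leaves $E(C_n)$ once $S_n$ meets one of its ends. Now fix $x\in D$. As $x\in\overline{C_n}$, for each $n$ I can pick a point of $C_n$ within $1/n$ of $x$, and since every edge of $C_n$ has length at most $\eta_n$ this point may be moved to a vertex $q_n\in C_n$ with $d_\ell(q_n,x)\le 1/n+\eta_n$. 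Then $(q_n)$ is a \Cs\ of vertices converging to $x$ with $q_n\in C_n$, so \Lr{lcomb} yields a subgraph $R$---a comb or a subdivided star---with $V(R)$ converging to $x$ and containing infinitely many $q_n$; local finiteness excludes the subdivided star (a subdivided star with infinitely many vertices has a vertex of infinite degree), so $R$ is a comb; let $P$ be its spine.

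It remains to show that $P$ is an \oo-ray and that $P\to x$ in \ltp, and I expect this to be the main obstacle. Fix a finite $S\subseteq V(G)$ and put $\rho:=d_\ell(x,S)>0$ (positive because $x\in\blg$). Since $V(P)\subseteq V(R)$ converges to $x$, a tail $P'$ of $P$ has all its vertices---hence all its edges---in $G-S$, so $P'$ lies in a single component $C'$ of $G-S$. On the other hand, for all but finitely many $n$ the path in $R$ from $q_n$ to $P$ stays within distance $\rho$ of $x$ (only finitely many vertices of $R$ are $\rho$-far from $x$, and these lie on only finitely many of the disjoint teeth), hence avoids $S$; since $q_n\in C_n\subseteq C(S,\oo)$ once $S\subseteq S_n$, the vertex at which this path meets $P$ lies in $C(S,\oo)$, and---the $q_n$ being eventually distinct---infinitely many such vertices of $P$ arise. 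As only finitely many vertices of $P$ lie outside $P'$, the tail $P'$ meets $C(S,\oo)$, so $C'=C(S,\oo)$. As $S$ was arbitrary, $P$ is an \oo-ray; and combining $V(P)\to x$ with $\eta_n\to 0$ (a tail of $P$ lies in $C_n$, whose edges have length $\le\eta_n$) gives $P\to x$ in \ltp, so $x\in\blom$. This establishes $\blom=D$, which is connected. The delicate point throughout this last step---and the reason the compactness input $\eta_n\to 0$ is needed, both to produce the vertices $q_n$ and to get convergence of $P$---is to propagate ``lying deep in the end $\oo$'' from the teeth-ends $q_n$ to the spine $P$ while keeping the relevant edge lengths under control.
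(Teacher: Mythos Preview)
Your proof is correct and takes a genuinely different route from the paper. The paper argues by contradiction: assuming $\blom$ splits into clopen pieces $O_1,O_2$, it uses compactness of $\blom$ to find disjoint open thickenings $U_1,U_2$ in $\ltp$, picks $\oo$-rays converging into $O_1$ and $O_2$ respectively, and exploits the infinitely many disjoint paths between them (coming from the end structure) to force either an arc from $U_1$ to $U_2$ or an accumulation point in $\blom\setminus(U_1\cup U_2)$---both contradictions. Your argument instead identifies $\blom$ explicitly with the nested intersection $\bigcap_n\overline{C(S_n,\oo)}$ of continua and gets connectedness for free. This buys a concrete description of $\blom$ that the paper's proof does not provide, and it isolates cleanly where compactness enters: total boundedness forces $\eta_n\to 0$, which is exactly what lets you upgrade a \Cs\ in $D$ to a genuine $\oo$-ray via \Lr{lcomb}. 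The paper's approach stays closer to combinatorial end-theory, leaning on Menger-type connectivity between equivalent rays rather than on a topological limit construction. One small point worth tightening in your write-up: the $q_n$ are not literally ``eventually distinct'', but since each vertex lies in only finitely many $C_n$ every value is taken only finitely often, which is all you need to extract infinitely many distinct teeth with indices going to infinity.
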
 

Before proving this, let us make a couple of related remarks. If $\oo, \psi$ are distinct ends of \G, then there is a finite edge set $S$ that separates them. But then, \fe\ assignment \lER, and any choice of elements $x\in \blom, y\in  \bxx{\ell}{\psi}$, we have $d_\ell(x,y) \geq r$ where $r$ is the minimum length of an edge in $S$. This implies, firstly, that \blom\ is a closed subspace of \blg, and thus of \ltp, \fe\ end $\oo$, and secondly, that every two points $x,y$ as above lie in distinct components of \blg. Thus, in the case that \ltp\ is compact, \Tr{connbound} characterises the components of \blg: they are precisely the sets of the form \blom.

\begin{proof}[Proof of \Tr{connbound}.]
Suppose, on the contrary, that \ltp\ is compact but \blom\ disconnected. Then,
there is a bipartition  $\{O_1,O_2\}$ of \blom\ where both $O_1,O_2$ meet \blom\ and both are clopen in the subspace topology of \blom. 
As \blom\ is a closed subspace of \ltp\ it must be compact, and so there is a lower bound $r>0$ \st\ \fe\ $x\in O_1$ and $y\in O_2$ we have $d_\ell(x,y)> r$. 
Let $U_1:= \bigcup_{x\in O_1} B_x(\frac{r}{2})$ and $U_2:= \bigcup_{y\in O_2} B_y(\frac{r}{2})$ where the sets of the form $B_x(\frac{r}{2})$ are open balls in \ltp; note that $U_1,U_2$ are disjoint open sets of \ltp.

Pick a point $x \in O_1$ and a point $y \in O_2$. Moreover, pick \oo-rays $R,T$ in $G$ converging (in \ltp) to $x,y$ respectively. 
Since $R$ and $T$ are vertex-equivalent, there is an infinite set $\mathcal P$ of pairwise disjoint \pths{R}{T}. 

Note that $U_1$ contains a tail of $R$ and $U_2$ contains a tail of $T$. If $U_1 \cup U_2$ contains \inm\ of the paths in $\cp$, then we can combine $R,T$, and one of those paths, to construct a double ray $D$ contained in $U_1 \cup U_2$. However, the union of $D$ with $\{x,y\}$ is an \arc{x}{y}\ in \ltp, and this contradicts the choice of $U_1 , U_2$ as arcs are connected spaces.

Thus the subset $\cp' \subseteq \cp$ of paths $P$ that contain a point $q_P$ not in $U_1 \cup U_2$ is infinite. Let $p$ be an accumulation point of $\{q_P \mid P\in \cp'\}$, and note that $p\not\in U_1 \cup U_2$. Easily, $p\in \blg$. Let \seq{p}\ be a sequence of elements of $\{q_P \mid P\in \cp'\}$ converging to $p$. We may assume, \obda, that every $p_i$ is a vertex, for if it is an inner point of the edge $e=uv$, then we can subdivide $e$ into two edges  one of which has length equal to the length of the $u$-$p_i$ half-edge and the other has length equal to the length of the $p_i$-$v$ half-edge. We can now apply \Lr{lcomb} to \seq{p}, and as \g is \lf\ we obtain a comb $K$ that contains \inm\ of the $p_i$ and converges to $p$. On the other hand, the paths in $\cp$ combined with the ray $R$ yield another comb, whose spine is $R$, containing \inm\ of the $p_i$. Combining these two combs it is easy to see that the spine of $K$ is vertex-equivalent to $R$, which means that $p\in \blom$; this however contradicts the fact that $p\not\in U_1 \cup U_2$.

\comment{
	Thus, easily, there is a bipartition  $\{O_1,O_2\}$ of \blg\ where both $O_1,O_2$ meet \blom\ and both are open in the subspace topology of \blg. Let $U_1,U_2$ be open sets in \ltp\  such that $U_i \cap \blg = O_i$ for $i=1,2$. 

Firstly, we claim that $\overline{U_1 \cap U_2}$ contains only finitely many vertices of $G$. Indeed, if there was an infinite vertex set $X \subset \overline{U_1 \cap U_2}$, then any accumulation point $x$ of $X$ would lie in $\overline{U_1 \cap U_2}$. However, since $x$ is an accumulation point of a vertex set, it can only be a point in \blg, and this contradicts the choice of the $U_i$. 

Thus we have proved that $\overline{U_1 \cap U_2}$ contains only finitely many vertices, and by similar arguments we can also prove that $\overline{U_1 \cap U_2}$ meets (the interior of) only finitely many edges of \G, and that moreover $\ltp \sm (U_1 \cup U_2)$ meets only finitely many vertices and edges of \G.

	Now pick a point $x \in O_1 \cap \blom$ and a point $y \in O_2 \cap \blom$. Moreover, pick \oo-rays $R,T$ in $G$ converging (in \ltp) to $x,y$ respectively. 
	Since $R$ and $T$ are vertex-equivalent, there is an infinite set $\mathcal P$ of pairwise disjoint \pths{R}{T}. As $\overline{U_1 \cap U_2}$ and $\ltp \sm (U_1 \cup U_2)$ only meet  finitely many vertices and edges, the rest of \ltp, that is, $Y:=(U_1 \cup U_2) \sm \overline{U_1 \cap U_2}$, contains a tail of each of $R,T$ and almost all paths in $\mathcal P$. This means that $Y$ contains a double ray $D$ one tail of which converges to $x$ and another to $y$. But as the space $Y$ is the disjoint union of the open sets $U_1 \sm \overline{U_1 \cap U_2} \ni x$ and $U_2 \sm \overline{U_1 \cap U_2}\ni y$, we have a contradiction.
}
\end{proof}


Having seen \Tr{connbound}, it is natural to wonder whether \blom\ is always path-connected in case \ltp\ is compact. As we shall see, this is not the case.

Gromov \cite{gromov} remarked that for every compact metric space $X$ \ti\ a hyperbolic graph whose hyperbolic boundary is isometric to $X$; thus by \Lr{gromov} every compact space can be obtained as the \ltop-boundary of some \lf\ graph. Our next result strengthens this fact by relaxing the requirement that $X$ be compact.

\begin{theorem} \label{Lind}
Given a metric space $(X,d_X)$, there is a connected \lf\ graph \g and \lER\ \st\ \blg\ is isometric to $X$  \iff\ $X$ is complete and separable.
\end{theorem}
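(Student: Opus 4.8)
The plan is to dispose of the easy direction first and then construct a graph for the converse. For the forward implication, suppose $\blg$ is isometric to $X$. Since $G$ is connected and locally finite it is countable, so $\gid{G}$ has a countable dense subset (the vertices together with the rational points of each edge), and hence so does its completion $\ltp$; thus $\blg$, being a subspace of a separable metric space, is separable, and so is $X$. For completeness I would check that $\blg$ is closed in $\ltp$: around a vertex $v$ (of finite degree) every $d_\ell$-ball of radius less than half the length of the shortest edge at $v$ is contained in the open star at $v$, hence in $\gid{G}$, and around an interior point of an edge $e$ a small ball lies inside $e$; so $\pi(\gid{G})$ is open, $\blg$ is closed, hence complete, hence so is $X$.

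For the converse, assume $X$ is complete and separable. The cases $|X|\le 1$ are trivial (take $G$ a single vertex, or a ray whose edge lengths form a summable sequence), so fix an enumeration $x_1,x_2,\dots$ of a countable dense set $D\subseteq X$ of pairwise distinct points, and a fixed summable sequence of positive reals $\alpha_1,\alpha_2,\dots$; write $T_k:=\sum_{j\ge k}\alpha_j$, so $T_k\to 0$. I construct a layered graph $G$: its vertices are symbols $v^k_n$, one for every $k\ge 1$ and every $n\le k$; each ``vertical'' edge $v^k_n v^{k+1}_n$ gets length $\alpha_k$, so that $R_n:=v^n_n v^{n+1}_n v^{n+2}_n\cdots$ is a ray of finite length whose tail from level $k$ has length $T_k$; and each ``horizontal'' edge $v^k_n v^k_m$ (for $n\neq m\le k$) gets length $d_X(x_n,x_m)>0$. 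Then $G$ is connected (each level spans a finite complete graph, and consecutive levels are linked) and locally finite ($\deg v^k_n\le k+1$), and each $\hat x_n:=\lim_k v^k_n$ exists in $\ltp$ because $(v^k_n)_k$ is Cauchy.

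The crux is the index projection $\phi\colon V(G)\to D$, $v^k_n\mapsto x_n$. Since every vertical edge joins vertices of equal index while every horizontal edge $v^k_n v^k_m$ has length exactly $d_X(x_n,x_m)$, the triangle inequality in $X$ yields $\ell(P)\ge d_X(\phi(a),\phi(b))$ for every finite $a$--$b$~path $P$, hence $d_\ell(a,b)\ge d_X(\phi(a),\phi(b))$ for vertices $a,b$, i.e.\ $\phi$ is $1$-Lipschitz. I would then prove $d_\ell(\hat x_n,\hat x_m)=d_X(x_n,x_m)$: ``$\le$'' holds because the double ray formed by the relevant tails of $R_n$ and $R_m$ joined through the edge $v^k_n v^k_m$ has length $2T_k+d_X(x_n,x_m)\to d_X(x_n,x_m)$; ``$\ge$'' holds because, given any double ray $D$ with vertex sequences $a_l\to\hat x_n$ and $b_l\to\hat x_m$ along its two tails, the $1$-Lipschitz bound (applied with $v^k_n\to\hat x_n$) forces $\phi(a_l)\to x_n$ and $\phi(b_l)\to x_m$, so $\ell(D)\ge\lim_l d_X(\phi(a_l),\phi(b_l))=d_X(x_n,x_m)$. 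Hence $x_n\mapsto\hat x_n$ is an isometric bijection onto $\{\hat x_n:n\in\N\}$, which by the universal property \eqref{univ} of completions extends to an isometry of $X$ (the completion of $D$) onto the closure $\overline{\{\hat x_n\}}$ in the complete space $\ltp$.

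It remains to show $\blg=\overline{\{\hat x_n\}}$, and I expect this to be the main obstacle: in general $\sum_{e}\ell(e)=\infty$ (it must be whenever $X$ is unbounded), so $\ltp$ is not compact and one cannot lean on the tidy picture of \Tr{finl} -- one really has to control where Cauchy sequences in the $1$-complex can escape, which is exactly what $\phi$ together with the layered structure is designed to do. For ``$\overline{\{\hat x_n\}}\subseteq\blg$'' I would show each $\hat x_n$ is a genuine boundary point: a path from $v^j_n$ down to any fixed level $K$ must cross each intervening gap along a vertical edge, so $d_\ell(v^j_n,v^K_n)\ge\sum_{i=K}^{j-1}\alpha_i$ stays bounded below by $T_K>0$, and no interior point of an edge is the limit of infinitely many distinct vertices. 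For ``$\blg\subseteq\overline{\{\hat x_n\}}$'', take $p\in\blg$ with $p=\lim q_i$, $q_i\in\gid{G}$: only finitely many $q_i$ lie in any single edge (else compactness of that edge would put $p$ in $G$), and $s_i:=d_\ell(q_i,w_i)\to 0$, where $w_i$ is a vertex nearest to $q_i$ (if $s_i\ge\varepsilon_0$ along a subsequence, then each such $q_i$ lies in an edge interior at distance $\ge\varepsilon_0$ from both endpoints, so leaving that edge costs at least $\varepsilon_0$, making those $q_i$ pairwise $\varepsilon_0$-separated unless two of them share an edge -- contradicting either the Cauchy property or $p\notin G$); replacing $q_i$ by $w_i$ gives vertices $w_i\to p$ whose levels tend to infinity (a bounded level would force a constant subsequence and $p\in G$), so $d_\ell(w_i,\hat x_{\phi(w_i)})\le T_{\mathrm{level}(w_i)}\to 0$ and hence $p\in\overline{\{\hat x_n\}}$. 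Combining everything, $\blg=\overline{\{\hat x_n\}}$ is isometric to $X$, as required.
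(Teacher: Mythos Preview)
Your proof is correct and follows essentially the same route as the paper: the same layered graph (complete graphs on an increasing initial segment of the dense set at each level, linked by summable ``vertical'' edges), the same $1$-Lipschitz index projection $\phi$ giving $d_\ell\ge d_X\circ\phi$, and the same surjectivity argument via levels tending to infinity. You are simply more explicit than the paper at a few points it treats as obvious --- that $\blg$ is closed (via local finiteness making $\gid{G}$ open), that each $\hat x_n$ genuinely lies in $\blg$, and the replacement of interior points by vertices --- while your ``$\ge$'' estimate via double rays is a slight detour compared to reading it off directly from the $1$-Lipschitz bound on the defining Cauchy sequence $d_\ell(\hat x_n,\hat x_m)=\lim_k d_\ell(v^k_n,v^k_m)\ge d_X(x_n,x_m)$.
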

\begin{proof}
For the backward implication, let $U=\{u_1,u_2,\ldots\}$ be a countable dense subset of $X$. To define \G, let its vertex set $V$ consist  of vertices $z^n_u$, one for each \nin\ and $u\in U$. The edge set of \g is constructed as follows. For every  $u\in U$ and every \nin, connect $z^n_u$ to $z^{n+1}_u$ by an edge $e$, and let $\ell(e):= 2^{-n}$. Moreover, \fe\ \nin\ and every pair $u_i,u_j\in U$ \st\ $i,j\leq n$, connect $z^n_{u_i}$ to $z^{n}_{u_j}$ by an edge $e$, and let $\ell(e):= d_X(u_i,u_j)$. 

We now define the required isometry $f: X \to \blg$. For every $u\in U$, let $f(u):= \lim_n z^n_u$. By the choice of $\ell$, it follows easily that $d_\ell(f(u), f(u')) \leq d_X(u,u')$  \fe\ $u,u'\in U$. Applying the triangle inequality for $d_X$ it is also straightforward to check that $d_\ell(f(u), f(u')) \geq d_X(u,u')$. Thus we obtain
\labtequ{isf}{$d_\ell(f(u), f(u'))= d_X(u,u')$  \fe\ $u,u'\in U$}
as desired. For every other point $x\in X$, let \seq{x} be a \Cs\ of points in $U$ converging to $x$. Note that by \eqref{isf} the sequence $(f(x_i))$ is also Cauchy. Thus we may define $f(x):= \lim f(x_i)$. It is an easy consequence of \eqref{isf} and the definition of $f$ that $f$ is distance preserving. 

To see that $f$ is surjective, let $(z^{n_i}_{u_{m_i}})_\iin$ be a \Cs\ (in \ltp) of vertices of \G\ converging to a point $\psi\in \blg$. By the construction of \g and $\ell$, we have $d_X( u_{m_i} , u_{m_j} ) \leq d_\ell( z^{n_i}_{u_{m_i}}, z^{n_j}_{u_{m_j}})$ \fe\ $i,j$, which implies that the sequence $(u_{m_i})_\iin$ of $X$ is also Cauchy. Moreover, since $(z^{n_i}_{u_{m_i}})_\iin$ converges to \blg\ we have $\lim_i n_i= \infty$; thus $(z^{n_i}_{u_{m_i}})_\iin$ is equivalent to the sequence $(f(u_{m_i}))_\iin$. This, and the definition of $f$, implies that $f( \lim u_{m_i} ) = \psi$. We have proved that $f$ is surjective, which means that it is indeed an isometry.

For the forward implication, let \g be a countable graph and fix \lER. Then \blg\ is clearly complete. To show that it is separable, it suffices to show that it is second countable, and a countable basis for \blg\ is $\cu:=\{\blg \cap B_r(x) \mid x\in V(G), r\in \Q\}$ where $B_r(x)$ is the open ball of radius $r$ and center $x$. 

\comment{
	Thus our proof is complete. We remark that  the above construction can be modified slightly to ensure that, in addition, 
\labtequ{ltcom}{if $X$ is compact then \ltp\ is compact as well.}
	Indeed, we can shorten the lengths of the edges of the form $z^n_u z^{n+1}_u$ so that the total length of each ray $z^{1}_{u_i} z^{2}_{u_i} z^{3}_{u_i} \ldots$ is (at most) $2^{-i}$. It \ises\ that \blg\ is still isometric to $X$, and that now every infinite sequence in \ltp\ has an accumulation point as \blg\ is compact, which means that \ltp\ is indeed compact.
}
\end{proof}

If the metric space $X$ in \Tr{Lind} is compact, then it is possible to modify the proof so that \ltp\ is also compact; for example, by using Gromov's aforementioned construction and \Tr{gromov}.
Since there are compact spaces that are connected but not path-connected, this,  and \Tr{connbound}, implies that \blom\ can be non-path-connected for some end \oo\ even if it is compact, although it must be connected.

\note{
\begin{theorem}\label{}
Let \g be a graph and let $\ell_1:E(G)\to \R_{>0}, \ell_2:E(G)\to \R_{>0}$ be two assignments of edge lengths \st\ $\sum_{e\in E(G)} |\ell_1(e)-\ell_2(e)|<\infty$. Then the identity function on $G$ extends to a homeomorphism between $\ltopxl{\ell_1}{G}$ and $\ltopxl{\ell_2}{G}$.
\end{theorem}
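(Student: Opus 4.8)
The plan is to lift the identity map of the $1$-complex $G$ to a homeomorphism of the two completions by means of the universal property \eqref{univ}. Concretely, I would show that the identity is uniformly continuous both as a map $(\gid G,d_{\ell_1})\to(\gid G,d_{\ell_2})$ and in the reverse direction, extend each to the completions, and check that the two extensions are mutually inverse. If $G$ is disconnected one argues componentwise, and since $\ell_1,\ell_2$ are positive the components do not depend on the choice of lengths, so assume $G$ connected. Write $\delta(e):=|\ell_1(e)-\ell_2(e)|$, so $\Delta:=\sum_{e\in E(G)}\delta(e)<\infty$ by hypothesis. The estimate below will in particular give $d_{\ell_1}(x,y)=0$ iff $d_{\ell_2}(x,y)=0$, so the two sets of identified vertices coincide, $\gid G$ is literally one set for both metrics, and the identity on it makes sense. (As usual each edge $e$ is a copy of $[0,1]$ rescaled by its length, so a sub-segment of $e$ of $\ell_1$-length $t$ has $\ell_2$-length $t\,\ell_2(e)/\ell_1(e)$.)

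The key step, and the only one requiring genuine work, is: \emph{for every $\eps>0$ there is a $\delta>0$ with $d_{\ell_1}(x,y)<\delta\Rightarrow d_{\ell_2}(x,y)<\eps$ for all $x,y\in\gid G$.} I would prove this as follows. Using $\Delta<\infty$, fix a finite $F\subseteq E(G)$ with $\sum_{e\notin F}\delta(e)<\eps/4$, set $m:=\min_{e\in F}\ell_1(e)>0$ and $D:=\max_{e\in F}\delta(e)$, and take $\delta$ small relative to $\eps$, to $m$ and to $m\eps/(D+1)$. Given $x,y$ with $d_{\ell_1}(x,y)<\delta$, pick an arc $P$ from $x$ to $y$ in the $1$-complex $G$ with $\ell_1(P)<\delta$; being a compact arc, $P$ meets only finitely many edges, all but at most two of which (one possibly at each endpoint of $P$) it traverses in full, and since $\delta<m$ none of the fully traversed ones lies in $F$. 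Now bound $\ell_2(P)$ against $\ell_1(P)$ piece by piece: on a fully traversed edge $e$ the $\ell_2$-length exceeds the $\ell_1$-length by at most $\delta(e)$, and as all these $e$ lie outside $F$ their total excess is $<\eps/4$; on a terminal half-edge $q$ of an edge $e$ the $\ell_2$-length is $\ell_1(q)\,\ell_2(e)/\ell_1(e)$, which exceeds $\ell_1(q)$ by at most $\ell_1(q)\,\delta(e)/\ell_1(e)\le\delta\,\delta(e)/\ell_1(e)$, hence at most $\delta D/m<\eps/8$ if $e\in F$, and at most $\delta(e)$ if $e\notin F$ — and in the latter case the two terminal half-edges sit on distinct edges outside $F$, so their excesses sum to $<\eps/4$. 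Adding up, $\ell_2(P)<\ell_1(P)+\tfrac{3}{4}\eps<\eps$, so $d_{\ell_2}(x,y)<\eps$. Since the hypothesis is symmetric in $\ell_1,\ell_2$, the same argument handles the reverse direction.

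To conclude, the identity $(\gid G,d_{\ell_1})\to(\gid G,d_{\ell_2})$, followed by the canonical dense embedding $\gid G\hookrightarrow\ltopxl{\ell_2}{G}$, is a uniformly continuous map into the complete space $\ltopxl{\ell_2}{G}$, so \eqref{univ} gives a unique uniformly continuous extension $f\colon\ltopxl{\ell_1}{G}\to\ltopxl{\ell_2}{G}$; symmetrically one gets a uniformly continuous $g\colon\ltopxl{\ell_2}{G}\to\ltopxl{\ell_1}{G}$. Both $g\circ f$ and $\mathrm{id}$ are uniformly continuous self-maps of $\ltopxl{\ell_1}{G}$ agreeing with the identity on the dense subspace $\gid G$, so the uniqueness clause of \eqref{univ} forces $g\circ f=\mathrm{id}$; likewise $f\circ g=\mathrm{id}$. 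Hence $f$ is a homeomorphism and, by construction, restricts to the identity on $G$.

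I expect the estimate of the middle paragraph to be the only real obstacle; the rest is bookkeeping with \eqref{univ}. Its content is that a short $\ell_1$-arc may nonetheless run through many edges, so one cannot control $\ell_2(P)$ by $\ell_1(P)$ together with the discrepancy on just one edge: one genuinely needs the summability $\Delta<\infty$ both to absorb all but finitely many ``offending'' edges into a single uniformly small tail, and to be able to say that an arc of $\ell_1$-length below $m=\min_{e\in F}\ell_1(e)$ cannot cross any edge of the exceptional finite set $F$ in full. (A two-sided bound on the ratios $\ell_2(e)/\ell_1(e)$ would make the argument immediate, but it is a strictly stronger hypothesis, unavailable here.)
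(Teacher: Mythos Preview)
The paper does not actually prove this statement: it appears only inside a \verb|\note{...}| block (suppressed in the compiled version since \verb|\NOTESON=0|) and carries no proof at all. So there is no paper proof to compare against.

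Your argument is correct and is precisely the natural one. The reduction to uniform continuity of the identity on $(\gid G,d_{\ell_1})\to(\gid G,d_{\ell_2})$ and back, followed by the universal property \eqref{univ} and its uniqueness clause to get mutual inverses, is exactly how one should do this. The key estimate is also fine: summability of $\delta(e)$ lets you shove all but a finite set $F$ of edges into a tail of total discrepancy $<\eps/4$; taking $\delta<m=\min_{e\in F}\ell_1(e)$ guarantees no edge of $F$ can be traversed in full by an $\ell_1$-arc of length $<\delta$, so the full-edge excess is controlled by the tail; and the at most two terminal half-edges are handled by the rescaling ratio $\ell_2(e)/\ell_1(e)$, splitting according to whether the ambient edge lies in $F$ or not. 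Two small remarks worth tightening in a write-up: first, the estimate should explicitly be stated for points of the $1$-complex $G$ (not of $\gid G$), since it is this estimate that then \emph{proves} the two quotients $\gid G$ agree and makes the identity on $\gid G$ well defined; second, the ``half-edge length rescales by $\ell_2(e)/\ell_1(e)$'' step tacitly assumes the edge parametrisations $\sigma_e$ for the two metrics are chosen proportionally on the same underlying point set --- this is the natural choice and is consistent with the paper's set-up in \Sr{defsLtop}, but it deserves a sentence.
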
 
}

\section{Homology and Cycle space} \label{sechom}

\comment{
	\subsection{Introduction}
	As mentioned in the introduction, the \tcs\ outperforms the classical finitary cycle space, i.e.\ the first singular homology group, in that it lets most well known facts about the cycle space of finite graphs generalise to \lf\ graphs. One of the motivations was study \nlf\ graphs, and have a cycle space that behaves well for problems were a notion of edge length is inherent, for which the \tcs\ was doing poorly.

	Thus our aim in this section is to develop a definition, for every graph $G$ and every $\lER$, a homology group $\cclg$ that generalises the \tcs\ of Diestel and K\"uhn and takes edge lengths into account, in particular, it behaves well with respect to the example in \fig{fan}... and makes everything true. 
}

\subsection{Introduction} \label{sechomi}

In this section we are going to describe a new homology $\lhomx{X}$ defined on an arbitrary metric space $X$. It is  proved in \cite{lhom} that $\lhomx{X}$ coincides, in dimension 1, with the \tcs\ \ccg\ discussed in \Sr{insuccc} if applied to the metric space \ltp\ for the right function $\ell$, and that  $\lhomx{X}$ extends properties of \ccg\ to every compact metric space $X$. We will discuss, in particular, how \lhom\ can be used to extend results about the topological cycle space of a \lfg\ to \nlf\ graphs.

In order to define \ccg\ for a \lf\ graph $G$, we call a set of edges $D\subseteq E(G)$ a \defi{circuit}, if there is a topological circle $C$ in \fcg\ that traverses every edge in $D$ but no edge outside $D$. Then, let \ccg\ be the vector space over $\Z_2$ consisting of those subsets $F$ of $E(G)$ such that $F$ can be written as the sum of a possibly infinite family of \defi{circuits}; such a sum is well-defined, and allowed, if and only if the family is \defi{thin}, that is, no edge appears in infinitely many summands: the sum of such a family $\cf$ is by definition the set of edges that appear in an odd number of members of $\cf$.

The study of \ccg\ has been a very active field lately \cite{cyclesIntro}, but \nlf\ graphs have received little attention. One reason for this is the fact that \fcg\ can be generalised to \nlf\ graphs in several ways (see \Sr{secNlf}), all having advantages and disadvantages, and it was not clear which of them is the right topology on which the definition of \ccc\ should be based. Let me illustrate this by an example. In the graph of \fig{fan} (ignore the indicated edge lengths for the time being), let $\cf$ be the family of all triangles incident with $x$. Note that $\cf$ is a {thin} family, and its sum comprises the edge $xz$ and all edges of the horizontal ray. Now such an edge set can only be considered to be a circuit if the topology chosen identifies $x$ with the end \oo\ of the graph. Similarly, the vertex $y$ also has to be identified with \oo, and thus also with $x$, if such sums are to be allowed. So we either have to forbid these infinite sums, or content ourselves with a topology that changes the structure of the graph by identifying vertices. Both approaches have been considered \cite{cyclesII,tst}, but none was pursued very far. 

\showFig{fan}{A simple \nlfg.}

Let me now argue that with \ltp\ we might be able to overcome this dilemma: our aim is to define, for a graph $G$ and \lER, a cycle space \ccl\ based on circles in the topology \ltp, and prove results that hold for every $\ell$ and generalise the properties of the cycle space of a finite graph. As we shall see, this would allow us to postpone the decision of whether to identify vertices or forbid some infinite sums until we have a certain application in mind. See also the remark at the end of \Sr{secNlf}.

Suppose for example that we allow in \ccl\ precisely those sums of families \fml{F}\ of \defi{circuits}, that is, edge sets of circles in \ltp, that have finite total length, i.e.\ satisfy $\sum_{i} \sum_{e\in F_i} \ell(e)<\infty$. Firstly, note that such a family is always thin, since if an edge $e$ lies in infinitely many $F_i$ then the total length of \fml{F}\ will be infinite. We will now consider two special assignments $\ell_1,\ell_2$ of lengths to the edges of the graph \g of \fig{fan}, and see how \ccl\ behaves in these two interesting cases. 

Let $\ell_1$ be an assignment of lengths as shown in \fig{fan}. Then, the family of all triangles of \g has finite total length, so their sum lies in \ccl. This sum is the edge set $\{xz,zy\}$. On the other hand, $x$ and $y$ have distance $d_{\ell_1}(x,y)=0$ in this case, which means that $x$ and $y$ are identified in \ltp\ and $\{xz,zy\}$ is indeed a circuit.

For the second case, let $\ell_2(e)=1$ for every $e\in E(G)$. Then, $x$ and $y$ are not identified, so $\{xz,zy\}$ is not a circuit, but it is also not in \ccl, since every infinite family of circuits has infinite total length. 

The interested reader will try other assignments of edge lengths for this graph and convince himself that \ccl\ always behaves well in the sense that for any \lER\ any element of \ccl\ can be written as the sum of a family of pairwise  edge disjoint circuits.

Although \ccl\ performs well in simple cases like \fig{fan}, it is a rather naive concept, since in general, even if \g is \lf, there can be arcs and circles in \ltp\ that contain no vertex and no edge of \G; this can be the case for example when \ltp\ is the hyperbolic compactification of \G, see \Sr{secHyp}. Thus circuits do not describe their circles accurately enough. To make matters worse, even if we decide to disregard those circles that have subarcs contained in the boundary, and consider only those circles $C$ for which $E(C)$ is dense in $C$ we will not obtain a satisfactory cycle space as we shall see in the next section.

For these reasons, we will follow an approach combining singular homology with the above ideas. This will lead us, in \Sr{lhom}, to the homology \lhom\ that apart for graphs endowed with \ltop\ can be defined for arbitrary metric spaces.

\subsection{Failure of the edge-set approach} \label{faile}

Given a graph \g and \lER, let us call a (topological) circle $C$ in \ltp\ \defi{proper} if $E(C)$ is dense in $C$. Following up the above discussion, let us define \ccl\ to be the vector space over $\Z_2$ consisting of those subsets $F$ of $E(G)$ such that $F$ can be written as the sum of a (thin) family $\fml{F}$ of {circuits} of proper topological circles in \ltp\ such that $\sum_i \sum_{e\in F_i} \ell(e) < \infty$. Although \ccl\ behaves well with respect to the graph in \fig{fan}, it turns out that it is not a good concept: we will now construct an example in which \ccl\ has an element that contains no circuit. The reader who is already convinced that circuits cannot describe the circles in \ltp\ accurately enough could skip to \Sr{lhom}, however it is advisable to go through the following example anyway, as it will facilitate the understanding of the homology \lhom\ introduced there.

\example{fomalhaut}{
\normalfont

The graph \g we are going to construct contains a \procir\ $C_0$ like in \Er{ant} (\fig{antares}), the length of which is larger than the sum of the lengths of its edges. Moreover, it is possible to replace each edge of $C_0$ by an arc which also has a length larger than the sum of the lengths of its edges to obtain a new \procir\ $C_1$. We then replace each edge of $C_1$ by such an arc to obtain the \procir\ $C_2$, and so on. The \procir s $C_i$ will have lengths that grow slightly\afsubm{bla}\ with $i$, but the proportion of that length accounted for by the edges will tend to 0 as $i$ grows to infinity. Moreover, the $C_i$ will converge\afsubm{``''}\ to a circle $C$ containing no edges at all. This is the point where $\ccl(G)$ will break down: we will define a family $(F_i)$ of sums of circuits such that for every $j\in\N$ the edge set $\sum_{i\leq j} F_j$ is the circuit of $C_j$, but $\sum_{i\in\N} F_j=0$. Thus $(F_i)$ tends to describe the circles $C_i$, but its ``limit'' fails to describe their limit $C$, and we will exploit this fact to construct pathological elements of $\ccl(G)$.

We will perform the construction of both the underlying graph \g and the required family recursively (and simultaneously), in \oo\ steps. After each step $i$, we will have defined the graph $G_i$ and the edge-sets $F_0, \ldots F_{i}$, with $F_j\subseteq E(G_j)$, so that $\sum_{0\leq j \le i} F_j$ is the circuit of a \procir\ $C_i$ such that\afsubm{lengths grew}\ 
$\ell(E(C_i))\leq 2^{-i}$. The graph in which the pathological elements of \ccl(G)\ live is $G:=\bigcup G_i$. In each step we will specify the lengths of the newly added edges, and we will choose them in such a way that for every $i$ and every two vertices $x,y\in G_i$ the distance  $d_\ell(x,y)$ is not influenced by edges added after step $i$; in other words, no \pth{x}{y}\ in $G$ has a length less than the distance between $x$ and $y$ in $G_i$. This implies that any circle of $G_i$ is also a circle of $G$.

Formally, for step $0$, let $G_0$ be the graph in \Er{ant} (with edge lengths as in that example), let $C_0$ be the thick, ``wild'' circle there, and let $F_0=E(C_0)$ be its circuit. Then, for $i= 1,2,\ldots$, suppose we have already performed step $i$ so that it satisfies the above requirements. In step $i+1$, for every edge $e=vw$ of $C_i$, take a copy $H_e$ of the graph of \Er{ant}, and let $W_e$ denote the circle of $H_e$ corresponding to the thick circle $W$ in \Er{ant}. Recall that $\ell(W)=2\half$ but $\sum_{e \in E(W)} \ell(e)=1\half$. The vertex of $H_e$ corresponding to the vertex $x$ in \fig{antares} divides the outer double ray $L_e$ into two rays $R,S$;  now join, for every $j\in \N$, the $j$th vertex of $R$ to the endpoint $v$ of $e$ by an edge of length $\ell(e)/2^{j-1}$ and also join the $j$th vertex of $S$ to the other endpoint $w$ of $e$ by an edge of length $\ell(e)/2^{j-1}$; see \fig{glueingNLF}. Note that $v$ and $w$ have infinite degree now. Later we will see how the construction can be modified to obtain a \lfg\ with similar properties. 

\showFig{glueingNLF}{Joining the graphs $H_e$ to $G_i$ to obtain $G_{i+1}$.}

Now change the lengths of the edges of $H_e$ as follows. Scale the lengths of 
the edges in $E(H_e)$ down 
in such a way that 
the distance of the endpoints of $W_e \sm L_e$ is $\ell(e)$ and 
so that
\labtequ{half}{$\sum_{f\in E(W_e)\sm E(L_e)} \ell(f)= \ell(e)/2$.}
Such a choice of edge-lengths is possible since in \Er{ant} we were allowed to choose $s$ and $c$ independently from each other. \afsubm{bla}

Note that by the choice of the lengths of the newly attached edges, no \pth{v}{w}\ going through $H_e$ has a length less than $\ell(e)$. Moreover, since the lengths of the newly added edges converge to 0, the distance between $v$ and the end of the ray $R$ is 0, and the distance between $w$ and the end of the ray $S$ is also 0. Thus the union of $e$ with the arc $W_e\sm L_e$ is a \tocir\ $C_e$.

Having performed this operation for every edge $e$ of $C_i$ we obtain the new graph $G_{i+1}$. We now let $F_{i+1}:=\sum_{e\in E(C_i)} E(C_e)$. Moreover, let $C_{i+1}$ be the \tocir\ in $\ltopx{G_{i+1}}$ obtained from $C_i$ by replacing each edge $e\in E(C_i)$ by the arc $C_e \sm \kreis{e}=W_e\sm L_e$. This completes step $i+1$. Note that 
\labtequ{fi}{$E(C_{i+1})= E(C_{i}) + F_{i+1}$,}
thus assuming, inductively, that $\sum_{j\leq i} F_j= E(C_{i}) $, we obtain $\sum_{j\leq i+1} F_j= E(C_{i+1})$. 

Let $\ell_i:=\sum_{e\in F_i} \ell(e)$. As the circles $C_i$ and $C_{i+1}$ are edge-disjoint, \eqref{fi} implies $F_i= E(C_i) \cup E(C_{i+1})$, and thus $$\ell_i = \sum_{e\in E(C_{i+1})} \ell(e) + \sum_{e\in E(C_{i})} \ell(e).$$ By~\eqref{half} and the definition of $C_{i+1}$ we obtain \afsubm{=} $\sum_{e\in E(C_{i+1})} \ell(e) = \half \sum_{e\in E(C_{i})} \ell(e)$. Plugging this inequality  into the above equation twice (for two subsequent values of $i$), yields \afsubm{=} $\ell_i =  \frac{\ell_{i-1}}{2}$. Thus the family $\seq{F}$ does have finite total length. 

Now as $F_i= E(C_i) \cup E(C_{i+1})$ and the circles $C_i$ and $C_{i+1}$ are edge-disjoint, every edge in $\bigcup F_i$ appears in precisely two members of $(F_i)$ and thus $\sum_{i\in\N} F_i=\emptyset$. We can now slightly modify the family $(F_i)$ to obtain a new finite-length family $(F'_i)$ of circuits of \procir s in $G=\bigcup G_i$ the sum of which is a single edge: pick an edge $f$ of $C_0$, remove  from $F_1$ the circuit of $C_f$, then remove from $F_2$ all circuits of circles $C_e$ corresponding to edges $e$ that lie in $C_f$, and so on. For the resulting family $(F'_i)$ we then have $\sum_{i\in\N} F'_i=\{f\}$, and as $(F'_i)$ has finite total length the singleton $\{f\}$ is an element of \ccl, and in fact a pathological one as the endvertices of $f$ are distinct points in \ltp.

Transforming this example to a \lf\ one with the same properties is easy. The intuitive idea is to replace each vertex of infinite degree of $G$ by an end and its incident edges by double rays ``connecting" the corresponding ends. 
More precisely, let $H$ be a copy of the graph of \fig{antares}, and subdivide every edge $e=uv$ of $H$ into two edges $ux_e,x_ev$ to obtain the graph $H'$; assign lengths to the new edges so that $\ell(ux_e)+\ell(x_ev)=\ell(uv)$. Then, for every vertex $u\in V(H')\cap (V(H) \sm V(L))$, replace each (subdivided) edge $ux_e$ incident with $u$ by a ray starting at $x_e$ and having total length $\ell(ux_e)$, and make all these rays converge to the same point by joining them by infinitely many new edges with lengths tending to zero. Denote the end containing these (now equivalent) rays by $\oo_u$, and note that the lengths of the newly added edges can be chosen so that the distance between any two such ends $\oo_u, \oo_v$ of the new graph equals the distance between $u$ and $v$ in $H$. This gives rise to a new graph $H''$, and we may use $H''$ instead of $H$ as a building block in the above construction of \G, to obtain a \lf\ graph similar to \G; instead of the attaching operation of \fig{glueingNLF} we would now use an operation as indicated in \fig{glueing} (and instead of attaching, at step $i+1$, a new copy of $H''$ for each edge of $C_i$ we will attach a new copy of $H''$ for each double ray in $C_i$).

\showFig{glueing}{Modifying the construction of \g to make it \lf.}

} 

Example~\ref{fomalhaut} and our previous discussion show that the edge-sets of circles do not reflect the structure of a graph accurately enough, so we will take another tack: in the following section we are going to study circles from the point of view of singular homology.

\subsection{The singular-homology point of view} \label{lhom}

The \tcs\ \ccc\ of a \lfg\ \G, discussed in Sections~\ref{insuccc} and \ref{sechomi}, bears some similarity to the first singular homology group $H_1$ of \fcg, as both are based on circles, i.e.\ closed 1-simplices, in \fcg. \RD\ and Spr\"ussel  \cite{Hom1} investigated the precise relationship between \ccg\ and $H_1(\fcg)$, and found out that they are not the same: they defined a canonical mapping $f: H_1(\fcg) \to \ccg$ that assigns to every class $c\in H_1(\fcg)$ the set of edges traversed by one (and thus, as they prove, by every) representative of $c$ an odd number of times ---assuming both \ccc\ and $H_1$ are defined over the group $\Z_2$--- and showed that $f$ is a group homomorphism that is surjective, but not necessarily injective.  \Er{Ekringel} below shows their construction of a non-zero element of $H_1(\fcg)$ that $f$ maps to the zero element of $\ccg$.

\example{Ekringel}{
\normalfont

The space $X$ (solid lines) depicted in \fig{kringel} can be thought of as either the \FC\ \fcg\ of the infinite ladder \G, or as a subset of the euclidean plane; note that the two spaces are homeomorphic, since the set of vertices converges to a single limit point in both of them. 

Let \sig\ be the 1-simplex indicated by the dashed curve. This simplex starts and ends at the upper-left vertex, traverses every horizontal edge twice in each direction, and  traverses every vertical edge once in each direction. Thus it is mapped to the zero element of \ccg\ by the mapping $f$. However, \RD\ and Spr\"ussel  \cite{Hom1} proved that \sig\ does not belong to the zero element of $H_1(\fcg)$.

\showFig{kringel}{The 1-simplex of \RD\ and Spr\"ussel. It traverses each edge the same number of times in each direction, but it is not null-homologous.}
}

In what follows we are going to modify $H_1$ into a new homology group \lhom\ that does coincide with \ccc\ if applied to \fcg, and moreover generalises properties of \ccc\ when applied to an arbitrary continuum. The main idea is to impose a pseudo-metric on $H_1$ and identify elements with each other if their distance is $0$. (Here we will constrain ourselves to dimension 1 for simplicity, but the construction can be carried out for any dimension, see \cite{lhom}.)

For this, let $X$ be a metric space and define an \defi{area-extension} of $X$ to be a metric space  $X'$, in which $X$ is embedded by a fixed isometry $i: X \to X'$, such that each component of $X' \sm i(X)$ is either a disc or a cylinder. The \defi{\evol} of this extension is the sum of the \vol\s\ of the components of $X' \sm i(X)$. 

We now define a pseudo-metric $d_1$ on the first singular homology group $H_1(X)$ of $X$. Given two elements $[\phi],[\chi]$ of $H_1(X)$, where $\phi$ and $\chi$ are $1$-chains, let $d_1([\phi],[\chi])$ be the infimum of the \evol\s\ of all area-extensions $X'$ of $X$ \st\ $\phi$ and $\chi$ belong to the same element of $H_1(X')$.

It follows easily by the definitions that $d_1$ satisfies the triangle inequality. However, $d_1$ is not a metric, since there may exist $c, f\in H_1$, $c\neq f$, with $d_1(c,f)=0$; indeed, for the class $c$ of the simplex of \Er{Ekringel} we have $d_1(c,\zero)=0$, although $c$ is not the zero element $\zero$ of $H_1$; to see that $d_1(c,\zero)=0$, let $Q_j$ be the $j$th 4-gon in the space $X$ of \Er{Ekringel}, and consider the area-extension $X^i$ of $X$  obtained by pasting to $X$ the plane discs bounded by all 4-gons $Q_j$ for $j>i$. Easily, $X$ is indeed isometrically embedded in $X^i$. It is straightforward to check that the simplex $\sig$ of \Er{Ekringel} is null-homologous in $X^i$. Moreover, the area of $X^i$ if finite \fe\ $i$, and this area tends to 0 as $i$ grows to infinity. Thus \btdo\ $d_1$, we have $d_1(c,\zero)=0$ as claimed.

Declaring $c,f\in H_1$ to be equivalent if $d_1(c,f)=0$ and taking the quotient with respect to this equivalence relation we obtain the group $\lhomp=\lhomp(X)$; the group operation on $\lhomp$ can be naturally defined for every $c,d\in \lhomp$ by letting $c+d$ be the equivalence class of $\alp+\beta$ where $\alp\in c$ and  $\beta\in d$. To see that this sum is well defined, i.e.\ does not depend on the choice of $\alp$ and $\beta$, note that the union of two area-extensions of $X$, of \evol\ at most \eps\ each, is an area-extension of $X$ of \evol\ at most $2\eps$.

Now $d_1$ induces a distance function on $\lhomp$, which we will, with a slight abuse, still denote by $d_1$. It is not hard to prove that $d_1$ is now a metric on $\lhomp$; see \cite{lhom} for details. We now define our desired group $\lhom=\lhomx{X}$ to be the completion of the metric space $(H'_1,d)$. (The operation of $\lhom$ is defined, \fe\ $C,D\in \lhom$, by $C+D:= \lim_i (c_i + d_i)$ where \seq{c}\ is a \Cs\ in $C$ and \seq{d}\ is a \Cs\ in $D$.)

It can be proved that \ccg, where \g is a \lfg, is a special case of \lhom:

\begin{theorem}[\cite{lhom}] \label{lhiscc}
If \finl\ then $\lhom(\ltp)$ is isomorphic to \ccg.
\end{theorem}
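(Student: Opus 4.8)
The plan is to reduce to the Freudenthal compactification and to realise the isomorphism through the canonical homomorphism of \RD\ and Spr\"ussel. By \Tr{finl} the hypothesis $\finl$ gives $\ltp\eqT\fcg$, so $H_1(\ltp)$ may be identified with $H_1(\fcg)$; moreover, by \Lr{lcisle} every topological circle $C$ in $\ltp$ has length $\ell(C)=\sum_{e\in E(C)}\ell(e)$. Recall from \cite{Hom1} the surjective group homomorphism $f\colon H_1(\fcg)\to\ccg$ that sends a class $c$ to the set of edges traversed an odd number of times by some --- equivalently, by every --- representative of $c$. Equip $\ccg$ with the metric $\dst(F,F'):=\sum_{e\in F\bigtriangleup F'}\ell(e)$, which is well defined and bounded by $\sum_{e\in E(G)}\ell(e)<\infty$. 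Since $\ccg$ consists exactly of the edge sets meeting every finite cut evenly (\cite[Section~8.5]{diestelBook05}), $\ccg$ is closed in $2^{E(G)}$ with the product topology, and a short argument with $\finl$ then shows that $(\ccg,\dst)$ is a complete metric space.

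The statement splits into three parts. (i) If $d_1(c,\zero)=0$ then $f(c)=\emptyset$; hence $f$ factors through a homomorphism $\bar f\colon\lhomp(\ltp)\to\ccg$. (ii) If $f(c)=\emptyset$ then $d_1(c,\zero)=0$; hence $\bar f$ is injective, and as $f$ is onto, $\bar f$ is a group isomorphism onto $\ccg$. (iii) $\bar f$ and $\bar f^{-1}$ are uniformly continuous with respect to $d_1$ and $\dst$; hence $\bar f$ extends to an isomorphism of the respective completions, and since $(\ccg,\dst)$ is already complete this yields $\lhom(\ltp)\cong\ccg$, as required.

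For (i): suppose some edge $e_0$ is traversed oddly by $c$, and pick a finite cut $F_0=E(A,B)$ of $G$ with $e_0\in F_0$. Since $\ltp=\fcg$ is compact (\Cr{corcomp}) the closures $\overline A,\overline B$ are disjoint compacta, hence lie at some distance $\delta>0$ apart, and they belong to different components of $\ltp\setminus\kreis{F_0}$ (\cite[Section~8.5]{diestelBook05}). In an area-extension of sufficiently small total area, no added disc or cylinder can bridge this gap: such a cell would contain two of its boundary points at distance $\ge\delta$, which --- as its boundary curves are isometrically embedded in $\fcg$ --- forces its area to be bounded below in terms of $\delta$. So the separation of $\overline A$ from $\overline B$ persists in the extension, the algebraic intersection of the boundary $c=\partial\tau$ of any $2$-chain $\tau$ with it is $0$, and hence $c$ traverses $F_0$, in particular $e_0$, an even number of times --- a contradiction. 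For (ii), the ``Ekringel'' phenomenon of \Er{Ekringel} in general form: given $\eps>0$ pick a finite edge set $S$ with $\sum_{e\notin S}\ell(e)<\eps$; by local finiteness $\ltp\setminus\kreis S$ has finitely many components, each of small diameter and each a bundle of tails converging to finitely many ends. As $c$ traverses every edge evenly, $c$ is homologous in $\ltp$ to a cycle running inside the $\eps$-neighbourhood of $S$, and the remaining part can be annihilated by attaching, over each such component, a disc whose area is controlled --- via \Lr{lcisle} --- by the total length of the edges it contains; the added area is then $O(\eps)$, and letting $\eps\to0$ gives $d_1(c,\zero)=0$.

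Part (iii), the quantitative matching of $d_1$ with $\dst$, is where I expect the real difficulty, and it is here that the proof leans on \cite{lhom}. In one direction one needs cheap isometric fillings: a circle $C$ can be capped by an isometrically embedded disc of area controlled in terms of $\ell(C)$ and the diameter of $C$ in $\ltp$, so that a thin, finite-total-length family of pairwise disjoint circuits --- whose total length, hence the induced $\dst$-series, is automatically summable under $\finl$ --- gives rise to a $d_1$-Cauchy series of classes whose $\bar f$-image is the corresponding element of $\ccg$. In the other direction one needs an isoperimetric-type lower bound, controlling $\sum_{e\in f(c)}\ell(e)$ from below by the area of any area-extension in which $c$ becomes null-homologous. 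The finite-cut and compactness ingredients and the Diestel--Spr\"ussel surjectivity are off the shelf; the delicate point --- and the likely main obstacle --- is making these two area estimates precise enough that together they deliver the bi-uniform continuity of $\bar f$, i.e.\ that $d_1$-convergence of homology classes corresponds exactly to $\dst$-convergence of their edge sets.
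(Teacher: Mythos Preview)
The paper does not prove this theorem: it is quoted from \cite{lhom}, and the only information the paper adds is that the isomorphism is the canonical one sending the class of a circle to the set of edges it traverses. So there is no in-paper argument to compare against. Your overall architecture --- factor the Diestel--Spr\"ussel homomorphism $f\colon H_1(\fcg)\to\ccg$ through $\lhomp$, match $d_1$ with an $\ell$-weighted symmetric-difference metric on $\ccg$, then complete --- is consistent with that description and is a reasonable blueprint for what a proof should look like.

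However, there is a genuine error in your argument for (i). You pick a finite cut $F_0$ containing the oddly-traversed edge $e_0$, argue that the two sides stay separated in a small-area extension, and conclude that ``$c$ traverses $F_0$, in particular $e_0$, an even number of times''. But the intersection-with-a-cut invariant only tells you that the \emph{total} number of crossings of $F_0$ is even; it says nothing about any individual edge. In fact $f$ already lands in $\ccg$, so $f(c)$ meets \emph{every} finite cut evenly regardless of whether $f(c)=\emptyset$: the cut argument cannot distinguish the two cases. What is needed is a local invariant at a single edge --- e.g.\ the mod-$2$ crossing number at the midpoint $m$ of $e_0$ --- together with a proof that no small-area disc or cylinder can be attached so as to destroy it. That last step is precisely an isoperimetric lower bound of the same flavour you defer to \cite{lhom} in part (iii); it is not obvious, and since the paper does not even make the notion of ``area'' of a metric disc precise, you cannot simply assert that a cell with two boundary points at distance $\ge\delta$ has area bounded below.

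Parts (ii) and (iii) you candidly present as sketches; you are right that (iii), the two-sided comparison of $d_1$ with your $\dst$, is where the substance lies. As it stands your proposal is a plausible outline of how the argument in \cite{lhom} might be organised, but the decisive area estimates --- needed already for (i), not only for (iii) --- are neither formulated precisely nor proved here.
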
 
(Recall that if \finl\ then $\ltp \eqT \fcg$ by \Tr{finlf}.)
The isomorphism of \Tr{lhiscc} is canonical, assigning to the class corresponding to a circle $C$ in \ltp\ the set of edges traversed by $C$.

The interested reader will check that \lhom\ also behaves well \wrt\ \Er{fomalhaut}. The important observation is that \fe\ $i$ there is an area-extension $X^i$ of the space \ltp\  in which all circles $C_j$ for $j>i$ become homologous to each other, and by the choice of the lengths of the circles $W_e$, the $X^i$ can be constructed so that the area of $X^i$ tends to 0 as $i$ grows to infinity.

Having seen \Tr{lhiscc}, we could try to use \lhom\ to extend results about the cycle space of a finite or \lfg\ to other spaces, for example \nlfg\s\ (recall our discussion in \Sr{sechomi}). But in order to be able to do so we would first have to interpret those results into a more ``singular'' language. In this and the 
 next section 
we are going to see three examples of how this can be accomplished.

The main result of \cite{lhom} is that \lhom\ extends the following fundamental property of \ccg:

\LemCycDec{cycD}

\Lr{cycD} has found several applications in the study of \ccg\ \cite{LocFinMacLane,cyclesI,geo} and elsewhere \cite{fleisch}, and several proofs have been published \cite{hp}. The following theorem generalises \Lr{cycD} to $\lhom(X)$ in case $X$ is a compact metric space. 

Recall that an 1-cycle is a (finite) formal sum of 1-simplices that lies in the kernel of the boundary operator $\partial_1: C_1 \to C_0$ (\cite{Hatcher}). A \defi{\rep} of $C\in \lhom$ is an infinite sequence $\seq{z}$ of $1$-cycles $z_i$  such that the sequence $(\ec{\sum_{j\leq i} z_j})_{i\in\N}$ is a \Cs\ in $C$, where \defi{$\ec{z}$} denotes the element of \lhomp\ corresponding to the $1$-cycle $z$. (One can think of a \rep\ as an ``1-cycle'' comprising infinitely many 1-simplices.) For an 1-cycle $z$ we define its \defi{length $\ell(z)$} to be the sum of the lengths of the simplices appearing in $z$ with a non-zero coefficient. Note that the coefficients of the 1-simplices in $z$ do not play any role in the definition of $\ell(z)$ as long as they are non-zero; in particular, $\ell(z) \geq 0$ for every $z$.

\begin{theorem}[\cite{lhom}]\label{lCycD}
For every compact metric space $X$ and $C \in \lhomx{X}$, there is a \rep\ $\seq{z}$ of $C$ that minimizes $\sum_i \ell(z_i)$ among all \rep\s\ of $C$. Moreover, \seq{z} can be chosen so that each $z_i$ is a \clex. 
\end{theorem}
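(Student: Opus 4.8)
The plan is to combine two ingredients: (A) an approximation principle, saying that a $1$-cycle in a compact metric space can be rewritten as a (possibly infinite) sum of \clexs\ of no greater total length, which both produces \clex\ representatives and shows that the infimum we want to attain is the same whether taken over all representatives or only over \clex\ ones; and (B) a compactness argument of Arzel\`a--Ascoli type, together with lower semicontinuity of length and ``homological continuity'', which upgrades a minimizing sequence of representatives to an actual minimizer. Write $m$ for the infimum of $\sum_i\ell(z_i)$ over all representatives $\seq{z}$ of $C$. If $m=\infty$, every representative is a minimizer and (A) turns any one of them into a \clex\ one, so I may assume $m<\infty$.

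For (A), I would start from any representative $\seq{w}$ of $C$ with $\sum_i\ell(w_i)<\infty$ and treat a single $1$-cycle $w=w_i$. Over $\Z_2$ every point of $X$ is an endpoint of an even number of the simplices occurring in $w$, so an Eulerian argument (reparametrizing concatenations, and using that $\sigma+\bar\sigma$ is null-homologous) reorganizes $w$ into a sum of finitely many closed walks of the same total length. Given a closed walk $\gamma$ of finite length, I would approximate $X$ (or just the image of $\gamma$) by a finite $\delta$-net graph $H_\delta$, push $\gamma$ onto $H_\delta$, decompose the resulting finite cycle into circuits --- i.e.\ into finitely many circles, since the cycle space of a finite graph is spanned by circles --- pull these back to \clexs\ in $X$ of total length at most $\ell(\gamma)+O(\delta)$, and bound $d_1$ between $\ec\gamma$ and the sum of these \clexs\ by the area of a thin ``homotopy cylinder'', which is $O(\ell(\gamma)\,\delta)$. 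Letting $\delta\to0$ along a sequence and concatenating yields a \clex\ representative of $\ec w$ of total length $\le\ell(w)$; doing this for every $w_i$ and interleaving gives a \clex\ representative of $C$ of total length $\le\sum_i\ell(w_i)$. Hence $m$ equals the infimum over \clex\ representatives as well. (This is the analogue for a general compact $X$ of the mechanism behind \Lr{cycD}, where on a locally finite graph the circuits are honestly pairwise disjoint circles.)

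For (B), I would take \clex\ representatives $\seq{z^{(k)}}$ with $\sum_i\ell(z^{(k)}_i)\to m$, so that $\ell(z^{(k)}_i)\le m+1$ for every $i$ once $k$ is large. Parametrizing each \clex\ $z^{(k)}_i$ with constant speed on $[0,1]$, the maps $\{z^{(k)}_i\}_k$ are, for each fixed $i$, equi-Lipschitz; compactness of $X$, Arzel\`a--Ascoli and a diagonal argument then yield one subsequence of $k$'s along which $z^{(k)}_i$ converges uniformly, for every $i$, to a closed $1$-simplex $z^{(\infty)}_i$ with $\ell(z^{(\infty)}_i)\le\liminf_k\ell(z^{(k)}_i)$, whence $\sum_i\ell(z^{(\infty)}_i)\le m$ by Fatou. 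Uniform closeness of $z^{(k)}_i$ and $z^{(\infty)}_i$ lets one glue, isometrically onto $X$, a cylinder of circumference $\le m+1$ and vanishing height in which the two loops become homologous; this is an area-extension of vanishing area, so $d_1(\ec{z^{(k)}_i},\ec{z^{(\infty)}_i})\to0$, and the diagonal subsequence can be chosen so that even $\sum_i d_1(\ec{z^{(k)}_i},\ec{z^{(\infty)}_i})\to0$. Putting $s^{(k)}_i:=\sum_{j\le i}z^{(k)}_j$ and $s^{(\infty)}_i:=\sum_{j\le i}z^{(\infty)}_j$, the triangle inequality for $d_1$ gives $\sup_i d_1(\ec{s^{(k)}_i},\ec{s^{(\infty)}_i})\to0$; together with $\ec{s^{(k)}_i}\to C$ as $i\to\infty$ this shows $\ec{s^{(\infty)}_i}\to C$, so $\seq{z^{(\infty)}}$ is a representative of $C$, necessarily minimizing. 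Finally, if some $z^{(\infty)}_i$ fails to be a \clex, it is still a uniform limit of \clexs\ and $\ell(z^{(\infty)}_i)\to0$; if it is constant I drop it, otherwise I apply (A) to it and splice in the resulting \clexs\ of total length $\le\ell(z^{(\infty)}_i)$. Since the spliced blocks have lengths tending to $0$, the partial sums still converge to $C$, and one obtains the desired length-minimizing representative all of whose terms are \clexs.

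The hard part is making the two ``continuity'' steps rigorous in a bare compact metric space. In (A) one must build the finite-graph approximations and the thin homotopy cylinders so that they are genuine area-extensions --- $X$ must sit in them isometrically --- while keeping their area $O(\ell\,\delta)$; this is the technical crux. In (B) the delicate points are the homological continuity (a uniform limit of \clexs\ must stay in the same element of $\lhom$), the quantitative diagonalization giving a summable bound on $\sum_i d_1(\ec{z^{(k)}_i},\ec{z^{(\infty)}_i})$, and the final splicing: one must rule out that a block of short \clexs\ creates a large $d_1$-oscillation of the partial sums, and since a short circle need not be $d_1$-close to $\zero$ this has to be controlled through the very area bounds produced in (A).
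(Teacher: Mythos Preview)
The paper does not contain a proof of this theorem: it is stated with attribution \cite{lhom} and is proved in that companion paper, not here. So there is nothing in the present paper to compare your argument against. What the present paper does provide is motivation (Example~\ref{fomalhaut}, the failure of the edge-set approach) and an application (the deduction of \Lr{cycD} from \Tr{lCycD} and \Tr{lhiscc}), but no proof of \Tr{lCycD} itself.

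On your plan as such: the overall strategy --- reduce to \clex\ representatives by a decomposition/approximation step, then run an Arzel\`a--Ascoli compactness argument with lower semicontinuity of length --- is a plausible route, and you are right to flag the isometric area-extension constructions as the crux. But there is a genuine muddle in your final paragraph of (B). You write ``if some $z^{(\infty)}_i$ fails to be a \clex, it is still a uniform limit of \clexs\ and $\ell(z^{(\infty)}_i)\to0$''; for a fixed $i$ this makes no sense, and if you mean $i\to\infty$ it is a non sequitur: a uniform limit of \clexs\ can fail to be a \clex\ at \emph{any} length (think of two circles pinching to a figure-eight), not only when the length tends to zero. So the case analysis ``constant / non-constant'' does not exhaust the ways injectivity can fail, and the sentence needs to be rewritten. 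More seriously, once you splice in the (A)-decomposition of a non-\clex\ limit $z^{(\infty)}_i$, you must argue that the resulting partial sums are still $d_1$-Cauchy and converge to $C$; you note the danger yourself (``a short circle need not be $d_1$-close to $\zero$''), but as written you have not closed that loop. These are not fatal to the approach, but they are real gaps you would have to fill to turn the plan into a proof.
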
 

(See \Sr{defsTop} for the definition of \clex.) 

\Tr{lCycD} in conjunction with \Tr{lhiscc} imply in particular \Lr{cycD}. Indeed, given a \lfg\ \G, pick an assignment \lER\ with \finl; now each element $C$ of \ccg\ corresponds, by \Tr{lhiscc}, to an element $f(C)$ of $\lhomx{\ltp}=\lhomx{\fcg}$ (where we applied \Tr{finlf}). Applying \Tr{lCycD} to $f(C)$ we obtain the representative $\seq{z}$ where each $z_i$ is a \clex. Note that no edge $e$ can be traversed by both $z_i$ and $z_j$ for $i\neq j$, for if this was a case then we could ``glue'' $z_i$ and $z_j$ together after removing the edge $e$ from both, to obtain a new \rep\ $\seq{z'}$ of $f(C)$ with $\sum_i \ell(z'_i) = \sum_i \ell(z_i) - 2\ell(e)$, contradicting the choice of $(z_i)$. It follows that $\{E(z_i) \mid \iin \}$ is a set of pairwise disjoint circuits whose union is $C$.

\section{Geodetic circles and MacLane's planarity criterion} \label{secgeo}



A cycle in a graph \g is \defi{geodetic}, if for every $x,y\in V(C)$ one of the two $\pths{x}{y}$ in $C$ is a shortest $\pth{x}{y}$ also in the whole graph \G.
More generally, a circle $C$ in a metric space $(X,d)$ is \defi{geodetic}, if for every $x,y\in C$ one of the two $\arcs{x}{y}$ in $C$ has length $d(x,y)$. If $X$ is a finite graph then it is an easy, but interesting, fact that its geodetic circles generate its cycle space \cite{geo}. The following conjecture is an attempt to generalise this fact to an arbitrary continuum, or at least to an arbitrary compact \ltp, using the homology group \lhom\ of \Sr{sechom}.

For a set $U\subseteq \lhom$, or $U\subseteq \lhomp$, let $\wsp{U}$ be the set of elements of $\lhom$ that can be written as a sum of finitely many elements of $U$, and define the \defi{span} $\asp{U}$ of $U$ to be the closure  of \wsp{U} in (the metric space) \lhom. We say that $U$ \defi{spans} \lhom\ if $\asp{U}=\lhom$.

Call a \clex\ \defi{geodetic} if its image is a geodetic circle.

\begin{conjecture}\label{conjGeohH}
Let $G,\ell$ be such that \ltp\ is compact, and let $U$ be the set of elements $\ec{1\chi} \in \lhomp$ \st\ $\chi$ is a geodetic \clex. Then $U$ {spans} \lhom.
\end{conjecture}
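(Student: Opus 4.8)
The plan is to combine the representation of elements of \lhom\ by sequences of \clexs\ provided by \Tr{lCycD} with a ``shortcutting'' argument modelled on the finite case, exploiting that a compact \ltp\ is a \gms\ by \Tr{thgeo}.

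First I would reduce to a single circle. Note that $\asp{U}$ is a closed subgroup of \lhom: the set $\wsp{U}$ is a subgroup --- it contains $0=\ec{1\chi}+\ec{1\bar\chi}$, where $\bar\chi$ traces the circle of $\chi$ backwards, and is closed under addition and, since $\bar\chi$ is geodetic whenever $\chi$ is, under negation --- while the group operations of \lhom\ are $1$-Lipschitz for $d_1$, because the disjoint union of two area-extensions of areas at most $s$ and $t$ is an area-extension of area at most $s+t$; hence $\asp{U}=\overline{\wsp{U}}$ is a subgroup, and it is closed by definition. Now let $C\in\lhom$ with \ltp\ compact; by \Tr{lCycD} there is a \rep\ \seq{z} of $C$ in which every $z_i$ is a \clex, so that $C=\lim_i\sum_{j\le i}\ec{z_j}$. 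It therefore suffices to show that
\[
\text{every \clex\ class }\ec{1\chi}\text{ of a circle in a compact }\ltp\text{ lies in }\asp{U}.
\]

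The core step turns the class of a non-geodetic circle into a sum of classes of strictly shorter circles. Let $\chi$ trace the circle $C$, suppose $C$ is not geodetic, and fix $x,y\in C$ whose two \arcs{x}{y} $A_1,A_2$ in $C$ both satisfy $\ell(A_i)>d_\ell(x,y)$; by \Tr{thgeo} there is a geodesic $P$ from $x$ to $y$ in \ltp, with $\ell(P)=d_\ell(x,y)$. If $P$ meets $C$ only in $x,y$, then $A_1\cup P$ and $A_2\cup P$ are circles, and choosing \clexs\ $\chi_1$ of $A_1\cup P$ and $\chi_2$ of $A_2\cup P$ compatibly gives $\ec{1\chi}=\ec{1\chi_1}+\ec{1\chi_2}$ in \lhomp, since the simplex tracing $P$ occurs in both summands and cancels; in general $A_1\cup P$ and $A_2\cup P$ are finite unions of arcs, and the $1$-cycles supported on them decompose into \clexs\ of circles, so again $\ec{1\chi}$ is the sum of the resulting \clex\ classes. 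In every case each circle produced has length at most $\ell(A_i)+d_\ell(x,y)<\ell(A_1)+\ell(A_2)=\ell(C)$, hence \emph{strictly less} than $\ell(C)$. Iterating, and ``freezing'' a \clex\ as soon as its circle is geodetic, we obtain after $n$ rounds finite sets $\mathcal D_n$ of geodetic \clexs\ and $\mathcal N_n$ of non-geodetic ones with
\[
\ec{1\chi}=\sum_{\psi\in\mathcal D_n}\ec{1\psi}+\sum_{\psi\in\mathcal N_n}\ec{1\psi}\quad\text{in }\lhomp,
\]
the lengths of the circles traced by members of $\mathcal N_n$ strictly decreasing along each lineage of splits.

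What remains --- and where I expect the real difficulty --- is to show that the geodetic part converges, i.e.\ that $\sum_{\psi\in\mathcal N_n}d_1(0,\ec{1\psi})\to 0$, after which $\ec{1\chi}=\lim_n\sum_{\psi\in\mathcal D_n}\ec{1\psi}\in\asp{U}$. Two ingredients should enter. (a) A short loop in a compact space has small filling area: a circle of length at most $\delta$ can be capped by a thin ``lens'' all of whose chords have length at least $\delta$, so that \ltp\ still embeds isometrically into the enlarged space, whence $d_1(0,\ec{1\psi})=O(\delta^2)$; the same device bounds the $d_1$-distance between the classes of two uniformly close \clexs. (b) A compactness argument to tame the residual: if a lineage $\chi=\psi_0,\psi_1,\dotsc$ of non-geodetic \clexs\ (with $\psi_{k+1}$ produced by splitting $\psi_k$) never freezes, then, parametrised proportionally to arc length on \unin, the $\psi_k$ are uniformly Lipschitz, so by Arzel\`a--Ascoli a subsequence converges uniformly to a closed curve $\gamma$; were $\gamma$ a non-geodetic circle with positive ``margin'', one could perform the late splits at bad pairs of bounded margin and drop the length by a fixed amount each time, which is impossible along an infinite lineage, so $\gamma$ traces a geodetic circle (or degenerates), and by (a) the classes $\ec{1\psi_k}$ tend to its class, an element of $U$. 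Converting this into quantitative bookkeeping --- detaching from each nearly converged lineage its geodetic limit together with a correction cycle of $d_1$-norm at most $2^{-n}$ at stage $n$, and ruling out in particular residual loops of non-vanishing length that collapse to a point or to a non-simple curve at pathological points of \ltp\ --- is the main obstacle, and is presumably why the statement is only conjectured. As partial evidence, the locally finite case with \finl\ is cheaper: there $\ltp\eqT\fcg$ by \Tr{finlf}, $\lhom(\ltp)\cong\ccg$ by \Tr{lhiscc}, and one applies the known fact \cite{geo} that geodetic cycles generate the topological cycle space, once the thin-sum structure of \ccg\ is reconciled with $d_1$-convergence in \lhom.
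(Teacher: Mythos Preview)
There is nothing to compare against: the statement is posed in the paper as an open \emph{conjecture}, and no proof is given. All the paper records is (i) that the special case \finl\ follows from the main result of \cite{geo} via \Tr{finl} and \Lr{lcisle}, since under that hypothesis the two notions of geodetic circle agree, and (ii) \Er{exnogeo}, which shows that the compactness assumption cannot be dropped. So you are not missing the paper's argument --- there isn't one, and you yourself note that this ``is presumably why the statement is only conjectured''.

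Your sketch is a natural line of attack, and you correctly locate the real difficulty in the convergence of the residual. Two obstacles deserve to be made explicit. First, the shortcutting step does not control \emph{total} length: when you replace a circle $C$ by two circles via a geodesic of length $d_\ell(x,y)$, the sum of the two new lengths is $\ell(C)+2d_\ell(x,y)>\ell(C)$, so after iterating, $\sum_{\psi\in\mathcal N_n}\ell(\psi)$ may grow without bound. This undermines any attempt to bound $\sum_{\psi\in\mathcal N_n} d_1(0,\ec{1\psi})$ through length alone, and your Arzel\`a--Ascoli argument, which follows a single lineage, says nothing about how many lineages are alive at stage $n$. Second, even the isoperimetric estimate $d_1(0,\ec{1\psi})=O(\delta^2)$ for a loop of length $\delta$ is not free: the attached disc must keep \ltp\ \emph{isometrically} embedded, so every chord through the disc between boundary points $p,q$ must have length at least $d_\ell(p,q)$, and it is not obvious that one can always arrange this with area $O(\delta^2)$ (nor that $O(\delta^2)$ would suffice, given the first obstacle). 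These issues, together with the degenerate limits you already flag, are exactly why the statement remains open.
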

	
(Again, $\ec{z}$ denotes the element of \lhomp\ corresponding to the $1$-cycle $z$.) 

\Cnr{conjGeohH} could also be formulated for an arbitrary compact metric space instead of \ltp.

In \cite{geo} a variant of 	\Cnr{conjGeohH} was proved for the special case when $\ltp \eqT \fcg$, although, the notion of geodetic circle used there was slightly different: the length of an arc or circle was taken to be the sum of the lengths of the edges it contains, and geodetic circles were defined with respect to that notion of length. However, in the special case when \finl, \Lr{finl} and \Lr{lcisle} imply that our notion of geodetic circle coincides with that of \cite{geo}, and thus the main result of that paper implies that \Cnr{conjGeohH} is true if \finl. Conversely, a proof of \Cnr{conjGeohH} would imply the main result of \cite{geo} for that case. 

If we drop the requirement that \ltp\ be compact in \Cnr{conjGeohH} then it becomes false as shown by the following example.

\example{exnogeo}{
\normalfont

In the graph of \fig{nogeo} no geodetic circle  contains the edge $e$. To prove this, let us first claim that any geodetic circle $C$ containing $e$ must visit both \bp s $x,y$; for if not, then $C$ must contain a vertical edge $f=uv$ from the upper or the bottom row, and then it is possible to shortcut $C$ by replacing $f$ by a finite \pth{u}{v} to the right of $f$ that is shorter than $f$. Thus, $C$ must contain an \arc{x}{y}, but it is easy to see that there is no shortest \arc{x}{y}, so $C$ cannot be geodetic. It is now straightforward to prove that if $D$ is an element of \lhom\ corresponding to a cycle of this graph containing $e$, then $D$ is not in the span \asp{U}\ of the set $U$ of \Cnr{conjGeohH}.
}

\showFig{nogeo}{An example showing that \Cnr{conjGeohH} is false if \ltp\ is not compact.}

MacLane's well-known planarity criterion chartacterises planar graphs in algebraic terms (see also \cite[Theorem 4.5.1]{diestelBook05}):

\ThmMacLane{finml}

A generating set of \ccg\ is called \defi{simple} if no edge appears in more than two of its elements. 

\BS\ generalised \Tr{finml} to \lfg\s\ \cite{LocFinMacLane}.
Our next conjecture is an attempt to characterise all planar continua in algebraic terms in a way similar to
\Tr{finml}. In order to state it, call a set $S$ of \clexs\ in a metric space $X$ \defi{simple} if for every 1-simplex \sig\ in $X$ there are at most 2 elements of $S$ that have a  sub-simplex homotopic to \sig.

\begin{conjecture}\label{conjMcL}
Let $X$ be a compact, 1--dimensional, locally connected, metrizable space that has no cut point. Then $X$ is embeddable in $S^2$ \iff\ \ti\ a simple set $S$ of \clexs\ in $X$ and  a metric $d$ inducing the topology of $X$ so that the set $U:= \{ \ec{\chi} \in \lhomx{X} \mid \chi \in S\}$ {spans} \lhom.
\end{conjecture}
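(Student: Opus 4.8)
The plan is to prove the two implications by rather different means, drawing on two classical facts about Peano continua in the sphere: that the complementary domains of a cut-point-free Peano continuum $X\subseteq S^2$ are Jordan domains, and Claytor's planarity criterion, which provides finitely many Peano continua $Y_1,\dots,Y_k$ such that a Peano continuum embeds in $S^2$ if and only if it contains no homeomorphic copy of any $Y_i$. As a warm-up and template I would first record the locally finite case: if $X=\ltp$ for a \lfg\ \g with \finl, then $\lhomx X\cong\ccg$ by \Tr{lhiscc}, under this isomorphism a simple set of \clexs\ in $X$ corresponds to a simple generating set of \ccg, and the statement reduces to the MacLane planarity criterion for \lfg s due to \BS.

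\textbf{Forward implication.} Assume $X$ embeds in $S^2$. Being a locally connected continuum without cut points, $X$ has the property that each component $R_i$ ($i\in\N$) of $S^2\sm X$ is a Jordan domain, so $\partial R_i$ is a circle lying in $X$; choose a \clex\ $\chi_i$ tracing $\partial R_i$, and let $d$ be the restriction to $X$ of a round metric on $S^2$, so that $d$ induces the topology of $X$ and $X\hookrightarrow S^2$ is an isometric embedding. Put $S:=\{\chi_i\mid i\in\N\}$ and $U:=\{\ec{\chi_i}\mid i\in\N\}$. The family $S$ is simple: if two non-degenerate sub-simplices of circlexes from $S$ shared a common arc $A$ contained in $\partial R_i\cap\partial R_j\cap\partial R_k$ with $i,j,k$ distinct, then an interior point of $A$ would be locally two-sided in $S^2$ and yet incident with three complementary domains, which is impossible; the degenerate cases are dealt with by the reading of the phrase ``sub-simplex homotopic to $\sigma$''. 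And $U$ spans: since $S^2$ is simply connected, every $1$-cycle $z$ of $X$ is null-homologous in $S^2$, and filling in all the complementary domains $R_i$ with $i\notin F$ (for $F\subseteq\N$ finite) yields an area-extension $X'$ of $X$ in which $\ec z$ becomes a finite sum of classes $\ec{\chi_i}$ with $i\in F$; because $\sum_i\operatorname{area}(R_i)\le\operatorname{area}(S^2)<\infty$, the total area $\sum_{i\notin F}\operatorname{area}(R_i)$ of $X'$ tends to $0$ as $F$ grows, so $d_1$ between $\ec z$ and that finite sum tends to $0$. As \lhom\ is the completion of the subgroup generated by the classes $\ec z$ of $1$-cycles, this gives $\asp U=\lhomx X$.

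\textbf{Backward implication.} Assume $S$ and $d$ exist with $U$ simple and spanning \lhom, and suppose for contradiction that $X$ does not embed in $S^2$. By Claytor's criterion $X$ contains a subspace homeomorphic to some $Y_i$, and hence a finite subgraph $G_0\subseteq X$ which is a subdivision of $K_5$ or of $K_{3,3}$. I would then proceed in three steps. (i) Fix a finite connected subgraph $G\supseteq G_0$ of $X$ that is sufficiently dense near $Y_i$. (ii) Convert the topological data $(S,d)$ into a genuine simple generating set of the cycle space of $G$: assign to each $\chi\in S$ the set of edges of $G$ that $\chi$ traverses an odd number of times, discard those $\chi$ whose image contains no edge of $G$, use the simplicity of $S$ to bound by $2$ the number of retained members through any fixed edge of $G$, and deduce generation of the cycle space of $G$ from the fact that $U$ spans $\lhomx X$. (iii) Apply MacLane's theorem \Tr{finml} to conclude that $G$, and therefore $G_0$ and the topological $K_5$ or $K_{3,3}$ inside $X$, is planar, which is absurd since $G_0$ is a subdivision of $K_5$ or $K_{3,3}$.

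\textbf{Main obstacle.} Step (ii) of the backward implication is where the real work lies. The classes spanning $\lhomx X$ are limits of sums of \clexs\ with no a priori relation to any finite subgraph, the metric $d$ is arbitrary, and ``simple'' is phrased up to homotopy of sub-simplices rather than combinatorially; so extracting an honest simple generating set of the cycle space of a finite subgraph calls for a continuum analogue of the Whitney--Tutte ``cleaning'' of generating sets, carried out through the area-controlled topology of \lhom. A second, softer difficulty is to establish the two classical inputs in a form compatible with the singular-homology language of \lhom, and to run the area estimates of the forward direction for the \emph{chosen} metric $d$ rather than an idealised one. As in the \lfg\ case treated by \BS, I expect the forward implication to be essentially routine once the preliminaries are in place, whereas the backward implication will need a genuinely new ingredient in the non-graph setting.
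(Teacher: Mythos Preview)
The statement is labelled a \emph{conjecture} in the paper and is not proved there; the paper only motivates the hypotheses of local connectedness and absence of cut points by giving examples showing their necessity. There is therefore no proof in the paper to compare your proposal against.

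Your proposal is, by your own account, a plan rather than a proof, and the gap you flag in step~(ii) of the backward direction is genuine and not merely technical. Spanning in $\lhomx{X}$ is a statement about limits under the pseudometric $d_1$, and nothing in your sketch explains how such a limit descends to the finitary cycle space of a fixed finite subgraph $G\subseteq X$: the area-extensions witnessing small $d_1$-distance need have no relation to $G$ whatsoever, and the notion of ``simple'' in the conjecture is phrased via homotopy of sub-simplices, not combinatorially in terms of edges of $G$. Your invocation of Claytor's theorem is also imprecise (the forbidden continua are not literally subdivisions of $K_5$ or $K_{3,3}$ in general, and you would need to say exactly which version applies to a $1$-dimensional Peano continuum without cut points). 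Without a substantive new idea here the backward implication remains open, which is exactly the status the paper assigns to the statement.

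The forward direction is more plausible but is also not complete as written. You assert without argument that in the area-extension $X'$ obtained by filling in all but finitely many complementary domains every $1$-cycle $z$ of $X$ becomes homologous to some finite sum of the boundary circlexes $\chi_i$ with $i\in F$; this requires computing $H_1(X')$ for a sphere with finitely many open discs removed and is not hard, but it must be said. You also need to verify that the inclusion $X\hookrightarrow X'$ is an isometry in the precise sense required by the paper's definition of area-extension, that the notion of ``area'' used there agrees with the spherical area you are invoking, and that each complementary domain really is a Jordan domain (this is where both local connectedness and the no-cut-point hypothesis enter, via a classical theorem that should be named and checked).
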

	
To see the necessity of assuming local connectedness in this conjecture, consider the space obtained from some non-planar graph after replacing an interval of each edge by a \defi{topologist's sine curve}. 

The requirement that $X$ have no cut point in the conjecture is also necessary. Indeed, let $G$ be the infinite 2-dimensional grid, let $I$ be the real unit interval, and let $X$ be the topological space obtained from \fcg\ and $I$ by identifying the end of $G$ with an endpoint of $I$ ---which is now a cut point of $X$. It is easy to see that $X$ is not embeddable in $S^2$, although it satisfies all other requirements of \Cnr{conjMcL}.
	
\comment{The following is not strong enough to prove \Tr{GS}
	\begin{conjecture}\label{conjGeohH}
	For every $c \in \hH$, there is a \Cs\ $\seq{\chi} \in c$ every element $\chi_i$ of which is the homology class of a (finite) sum of geodetic \clex es.
	\end{conjecture}
}

\note{
	\begin{conjecture}\label{conjGeoS}
If $G$ is \lf\ then $S(\geol)=\cclg$.
\end{conjecture}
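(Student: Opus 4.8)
The plan is to reduce the identity $S(\geol)=\cclg$ — in which $\cclg$ denotes the $\ell$-homology group $\lhomx{\ltp}$ of \Sr{lhom} and $S(\geol)$ the span $\asp{\geol}$ (the closure, in that metric space, of the finite sums of homology classes of geodetic \clexs), as in \Cnr{conjGeohH} — to a statement about a single topological circle, and then to carry out an infinitary version of the argument that proves \Tr{fingeo} for finite graphs. Only the inclusion $\lhomx{\ltp}\subseteq\asp{\geol}$ requires work. For the machinery below \ltp\ must be compact; if it is not, one first has to reduce to that case, or replace \Tr{thgeo} by a version supplying enough approximate geodesics.

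First I would settle the case \finl. Then \Tr{finl} gives $\ltp\eqT\fcg$, and \Lr{lcisle} shows that the length of any arc or circle in \ltp\ equals the sum of the lengths of its edges; hence the present notion of geodetic circle coincides with the edge-length notion of \cite{geo}. By \Tr{lhiscc}, $\lhomx{\ltp}$ is isomorphic to \ccg, geodetic clexes corresponding to geodetic cycles, so here the statement is in essence the main theorem of \cite{geo}. This is both the model for the general argument and the main reason to believe it.

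For general \lf\ $G$ with \ltp\ compact, by \Tr{lCycD} every $C\in\lhomx{\ltp}$ has a representative $\seq{z}$ in which each $z_i$ is a \clex; since $C=\lim_i\ec{\sum_{j\le i}z_j}$ and $\asp{\geol}$ is a closed subgroup of $\lhomx{\ltp}$, it suffices to prove $\ec{z_i}\in\asp{\geol}$ for each $i$, i.e.\ to handle a single circle $C$. If $C$ is geodetic we are done. Otherwise there are $x,y\in C$ for which both $x$--$y$~arcs $A_1,A_2$ on $C$ are strictly longer than $d_\ell(x,y)$, and by \Tr{thgeo} (compactness of \ltp) there is an $x$--$y$~geodesic $A$. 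One checks, as in the finite case, that $A$ may be taken internally disjoint from $C$ — which needs a small argument in the infinite setting since $A\cap C$ can be complicated — so that $C_1:=A_1\cup A$ and $C_2:=A\cup A_2$ are again circles, with $\ec{C}=\ec{C_1}+\ec{C_2}$ and $\ell(C_i)<\ell(A_1)+\ell(A_2)=\ell(C)$. Iterating on $C_1$ and $C_2$, one drives the lengths down towards geodetic circles; in a finite graph $\ell$ takes only finitely many values and the process terminates, which is precisely the proof of \Tr{fingeo}.

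The difficulty is that in general the iteration must be carried out transfinitely: at limit stages one extracts a limit circle from the accumulating partial sums, using compactness of \ltp\ together with path-limiting arguments as in \cite{hp}, and one must show both that the process is well-founded — say via an ordinal rank on circles, or by bounding the number of splits — and, the main obstacle, that the geodetic circles produced along the way have finite total length, so that their sum converges in $\lhom$ to $\ec{C}$. The naive bookkeeping fails here: one split replaces $C$ by circles of total length $\ell(C)+2\ell(A)$, so the accumulated length can grow, and one must choose the splitting pairs $x,y$ economically — splitting off a longest possible arc, or only when $2\ell(A)$ is small compared with the length saved — and absorb the residual contributions into the passage to the completion defining \lhom. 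A further, genuinely new difficulty is posed by circles containing no edge at all, which occur when \ltp\ is a hyperbolic compactification, and for which the edge-counting arguments of \cite{geo} give no foothold.
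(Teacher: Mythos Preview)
The statement you are addressing is a \emph{conjecture}, not a theorem: it sits (together with its companion \Cnr{conjGeo}) inside one of the author's private \texttt{\textbackslash note} blocks and is never proved in the paper. There is therefore no proof to compare your attempt against. The public version is \Cnr{conjGeohH}, which adds the hypothesis that \ltp\ be compact and is likewise left open; the paper only records that the special case \finl\ follows from \cite{geo} via \Tr{finl}, \Lr{lcisle} and \Tr{lhiscc}---exactly the reduction you sketch in your second paragraph.

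Your write-up is honest about its status: it is a research programme, not a proof. The reduction to a single circle via \Tr{lCycD} and the splitting along a geodesic supplied by \Tr{thgeo} are the natural steps, but the items you yourself label ``difficulties'' are genuine gaps, not formalities: you give no argument that the transfinite splitting process is well-founded, no control on the total length of the geodetic pieces (your own bookkeeping shows it grows by $2\ell(A)$ at each split), and no idea for circles lying entirely in $\blg$. More seriously, your opening caveat---``if \ltp\ is not compact, one first has to reduce to that case''---cannot be made to work: \Er{exnogeo} exhibits a locally finite graph with non-compact \ltp\ in which no geodetic circle contains a certain edge $e$, so the class of any cycle through $e$ lies outside the span of the geodetic classes. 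Unless $S(\cdot)$ and $\cclg$ are meant to denote something other than the span $\asp{\cdot}$ and $\lhomx{\ltp}$, this shows that the bare hypothesis ``$G$ locally finite'' is too weak, which is presumably why the author demoted this formulation to a note and kept compactness in \Cnr{conjGeohH}.
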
 

\begin{conjecture}\label{conjGeo}
If $G$ is \lf\ then $<\geol>=\cclg$.
\end{conjecture}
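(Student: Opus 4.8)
The plan is to reduce \Cnr{conjGeo} to the finite statement \Tr{fingeo} by exhausting \g with finite subgraphs and transporting the conclusion to the limit with the machinery of \Sr{basFacts} and \Sr{sechom}. Observe first that when \finl, \Lr{lcisle} shows that the $\ell$-length of any circle in \ltp\ equals the sum of the lengths of its edges, so our notion of geodetic circle coincides with the one of \cite{geo}; since $\ltp\eqT\fcg$ in that case by \Tr{finl}, the assertion is already known. Thus the real content of \Cnr{conjGeo} lies in the case $\sum_{e\in E(G)}\ell(e)=\infty$, and I would first treat it under the extra hypothesis that \ltp\ is compact, where \Tr{thgeo} and \Tr{lCycD} are at our disposal. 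Fix an exhaustion $G_1\subseteq G_2\subseteq\cdots$ of \g by finite connected subgraphs with $\bigcup_n G_n=G$. Given $C\in\cclg$, apply \Tr{lCycD} to obtain a length-minimising representative $\seq{z}$ of $C$ by \clexs; after a harmless approximation the support of each partial sum $\sum_{j\le i}z_j$ may be assumed to lie inside some $G_{n(i)}$, and \Tr{fingeo} rewrites that partial sum, up to homology in $G_{n(i)}$, as a sum of cycles that are geodetic \emph{in} $G_{n(i)}$.

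The crux is that a cycle geodetic in $G_n$ need not be geodetic in \g, let alone in \ltp, because shorter detours can appear further out; this is precisely the obstruction exhibited by \Er{exnogeo} and \fig{nogeo}. To deal with it I would argue as follows. For a cycle $D$ that is geodetic in $G_n$ and any $x,y\in D$, a shortest \arc{x}{y} in \ltp\ exists because \ltp\ is compact (\Tr{thgeo}); one then wants to show that, after passing to a subsequence of the exhaustion and diagonalising, the family of $G_n$-geodetic cycles can be ``relaxed'' into a sequence of circles of \ltp\ whose classes converge in \lhom\ and whose limits are genuinely geodetic circles of \ltp, keeping the relevant vertex sequences convergent by \Lr{lcomb}. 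Assembling these limit classes across the representative would then place $C$ in the span $\asp{\geol}$, which gives one inclusion. The reverse inclusion is immediate: a geodetic circle is in particular a circle, so its class lies in \cclg, and \cclg\ is closed, hence it contains $\asp{\geol}$.

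I expect the main obstacle to be exactly this limiting step, in which one must control simultaneously (i) the homology class, so that the limit sum is still $C$ and nothing homologically non-trivial is lost into $\blg$; (ii) the geodicity of each subarc of the circles involved, which a priori holds only relative to $G_n$; and (iii) the non-degeneracy of the limiting object, i.e.\ that it is a circle and not a pinched arc or a wild continuum in the boundary. A subtler difficulty hidden in (ii) is that, when $\sum_{e\in E(G)}\ell(e)=\infty$, a shortest \arc{x}{y} in \ltp\ may carry infinitely many edges and need not be confined to any fixed $G_m$, so relating such arcs to the exhaustion requires a careful, perhaps simultaneous, choice of the $G_n$ and of $\ell$. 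Since \Er{exnogeo} shows that all of (i)--(iii) can break down once \ltp\ fails to be compact, I would be content in a first pass to establish the compact case --- which is the locally finite case of \Cnr{conjGeohH} --- and only afterwards ask whether local finiteness alone suffices, or whether \Cnr{conjGeo} should be restated with the hypothesis that \ltp\ be compact.
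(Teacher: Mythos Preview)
There is nothing to compare against: \Cnr{conjGeo} is a conjecture, not a theorem, and the paper gives no proof of it. It occurs only inside a private \verb|\note{}| block that is suppressed in the compiled paper; the corresponding public statement is \Cnr{conjGeohH}, which carries the additional hypothesis that \ltp\ be compact and is likewise left open.

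Your write-up is not a proof but a programme, and you say so yourself. The honest part of your plan is the reverse inclusion $\asp{\geol}\subseteq\cclg$, which is indeed immediate. The forward inclusion, however, has a genuine gap that the paper already exposes: the graph of \Er{exnogeo} is locally finite (it is a ladder-type construction with decreasing vertical edge lengths), its \ltp\ is not compact, and no geodetic circle of \ltp\ contains the edge $e$. Hence any element of $\cclg$ that ``uses'' $e$ cannot lie in $\asp{\geol}$. In other words, \Cnr{conjGeo} as stated --- with only local finiteness and no compactness assumption --- is false, and your step in which $G_n$-geodetic cycles are to be ``relaxed'' into geodetic circles of \ltp\ must fail in general, for exactly the reason you anticipate in your obstacle~(ii).

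Your closing suggestion, to add the hypothesis that \ltp\ be compact, is therefore not merely a convenience but a necessity, and it brings you precisely to \Cnr{conjGeohH}, which the paper states as open. Even under compactness the limiting step you describe is the heart of the problem and is not carried out anywhere in the paper; the only positive evidence offered is that the special case \finl\ follows from \cite{geo} via \Lr{lcisle} and \Tr{finl}, as you also note.
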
 

If $\ltp \eqT \fcg$ then $(|G|,\ell)$ is a metric representation of $G$: \ltp\ balls are smaller and $\fcg$ basic open sets clearly contain "old" balls. However, the concept of geodetic in \cite{geo} is different, since only the length along edges is taken into account.

	Lemma 4.10 of Geo holds with the new definition of geodesic circle: If $(C_i)$ is a chain of geodesic cycles, then from some $i$ on all $C_i$ have the same length, and you can construct the limit bearing in mind this length.
}

\section{Line graphs} \label{secLg}

In this section we are going to prove \Tr{Lg}; let us repeat it here:
\begin{noTheorem}
If $G$ is a countable graph and $\eti$ has a \tet\ then $\mtopx{L(G)}$ has a \hcy.
\end{noTheorem}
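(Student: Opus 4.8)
The plan is to read the topological Euler tour of $\eti$ ``from the other side'', converting it into a \hcy\ of $\mtopx{L(G)}$ through the dictionary ``vertex of $L(G)\ =\ $ midpoint of the corresponding edge of $G$''. We may assume $G$ is connected and has at least three edges (for $|E(G)|\le 2$ even the finite statement is degenerate). Then $L(G)$ is countable and connected, hence has a normal spanning tree, so $\mtopx{L(G)}$ is metrizable by \Tr{mtop}; and, fixing any \lER\ with \finl, $\eti$ is a compact metric space by \Tr{finl} and \Cr{corcomp}. Let $\tau\colon S^1\to\eti$ be a \tet. Its combinatorial skeleton is: for each edge $e$ of $G$ a closed subarc $A_e\subseteq S^1$ mapped homeomorphically onto $e$ by $\tau$, with the $A_e$ having pairwise disjoint interiors; their union omits only a closed set $Z$ with $\tau(Z)\subseteq V(G)\cup\EOO(G)$; and, as the $A_e$ are countably many arcs with disjoint interiors and $\tau$ is uniformly continuous, $\operatorname{diam}\tau(A_e)\to 0$. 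The cyclic order in which the $A_e$ occur around $S^1$ induces a cyclic order on $E(G)=V(L(G))$: when $A_e,A_{e'}$ share an endpoint $s$, the point $\tau(s)\in V(G)$ is an endvertex of both $e$ and $e'$, so $ee'\in E(L(G))$. The resulting set $M\subseteq E(L(G))$ is, at each $v\in V(G)$, a perfect matching of the edges of $G$ at $v$ (each edge of $G$ matched once at each end), so $M$ is $2$-regular.

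The candidate \hcy\ is the image of a map $f\colon S^1\to\mtopx{L(G)}$. On an arc $A_e$, $f$ sends the $\tau$-midpoint of $e$ to the vertex $e$ of $L(G)$ and carries the two halves of $A_e$ onto the halves at $e$ of the two $M$-edges incident with $e$ (the choice $\lL(ed)=\frac12(\ell(e)+\ell(d))$ makes this compatible with lengths, being the length of the path through the shared $G$-vertex between the midpoints of $e,d$). A point $s\in Z$ sitting between two consecutive arcs, with $\tau(s)\in V(G)$, goes to the midpoint of the corresponding $M$-edge. Finally, at a point $s\in Z$ at which infinitely many arcs accumulate --- necessarily $\tau(s)\in V(G)\cup\EOO(G)$ --- the vertices $e_n$ of $L(G)$ with $A_{e_n}\to s$ converge in $\mtopx{L(G)}$: since $e_n\to\tau(s)$ in $\eti$, for every finite edge set $W\subseteq E(G)$ all but finitely many $e_n$ lie in one component of $L(G)-W$, so by \Lr{comb} (the subdivided-star alternative being excluded, as its centre would separate the $e_n$) there is a ray $R$ in $L(G)$ with infinitely many disjoint paths from $R$ to $\{e_n\}$, and all the $e_n$ converge to the \vend\ $\psi$ of $L(G)$ containing $R$; set $f(s):=\psi$. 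Distinct accumulation points of $Z$ receive distinct \vend s, because the finitely many $G$-vertices, resp.\ distinct edge-ends of $G$, involved in two clusters are separated by a finite edge set.

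Granting that $f$ is well defined, it is injective --- each edge of $G$ is traversed once by $\tau$, so the ``bow-ties'' $f(A_e)$ around distinct vertices $e$ of $L(G)$ are disjoint, each $M$-edge is swept once, and the \vend s assigned to $Z$ are pairwise distinct and missed by every bow-tie --- and continuous, the only delicate point being at accumulation points of $Z$, where continuity is exactly the convergence $e_n\to f(s)$ above (using that $\mtopx{L(G)}$ is metric and that the $\lL$-lengths of the $M$-edges at $e_n$ tend to $0$). By \Lr{injHom}, $f$ is then a homeomorphism from $S^1$ onto $H:=f(S^1)$, so $H$ is a circle in $\mtopx{L(G)}$; and $H$ meets every vertex $e$ of $L(G)$ since $A_e\neq\emptyset$ for every edge $e$ of $G$. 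Hence $H$ is a \hcy.

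The real work --- and the step I expect to be the main obstacle --- is making $f$ well defined and continuous at the accumulation points of $Z$ mapping into $V(G)$: the cyclic arrangement of the arcs $A_e$ produced by an arbitrary $\tau$ need not be compatible with $f$. For instance, if all arcs spiral into one point $s$ from a single side while $\tau(s)$ is a vertex $v$, the recipe would attach a ray of $L(G)$ directly to a vertex rather than to an end, which is impossible in $\mtopx{L(G)}$. I would handle this by re-routing $\tau$ first: the pieces of the tour between consecutive visits to $v$ are closed excursions at $v$ and may be permuted freely, so a one-sided spiral can be turned into a two-sided accumulation; using \finl\ to keep diameters small, one checks the re-routed $\tau$ is still a \tet\ and, after finitely many such repairs per cluster, compatible with $f$. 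An alternative, likely cleaner, route is to skip $f$ and define the target directly, $H=\bigl(\text{all vertices of }L(G)\bigr)\cup\bigl(\text{interiors of }M\text{-edges}\bigr)\cup\Psi$ for the right set $\Psi$ of \vend s, prove $H$ is closed in $\mtopx{L(G)}$, and construct the homeomorphism $S^1\to H$ by hand.
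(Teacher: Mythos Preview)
Your conversion ``Euler tour $\to$ Hamilton circle via edge midpoints'' is exactly the paper's idea, and your description of $f$ on the arcs $A_e$ and on isolated points of $Z$ matches the paper's $\sigma'$. The gap is precisely where you locate it, but it is larger than you say. Your claim that ``distinct accumulation points of $Z$ receive distinct \vend s'' is false for an \emph{arbitrary} \tet\ $\tau$: nothing prevents $\tau$ from having two distinct accumulation points $s_1,s_2$ with $\tau(s_1)=\tau(s_2)$ (the same infinite-degree vertex $v$, or the same edge-end), and then the edges accumulating at $s_1$ and at $s_2$ converge to the \emph{same} point of $\mtopx{L(G)}$, destroying injectivity of $f$. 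Your separation argument only works once you know $\tau(s_1)\neq\tau(s_2)$. The one-sided-spiral problem you describe is a second, independent obstruction (continuity rather than injectivity), and your proposed fix ``permute excursions at $v$'' is not obviously sound: there can be infinitely many infinite-degree vertices, the repairs interact (an excursion at $v$ may contain accumulation points at $w$), and you would have to argue that the limit of infinitely many rearrangements is still continuous.

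The paper sidesteps all of this by \emph{not} starting from an arbitrary \tet. It first observes (\Lr{etoc}) that the existence of a \tet\ forces $G$ to have no odd finite cut, and then (\Lr{etoconv}) builds a \tet\ $\sigma$ in $\Getop$ from scratch, by a recursive $\omega$-step construction that greedily absorbs circuits, with two properties guaranteed by construction: (i) distinct accumulation points of vertex-preimages are sent to points separable by a finite edge set (this is exactly your missing injectivity), and (ii) whenever $\sigma$ reaches an infinite-degree vertex along an edge it also leaves along an edge (this kills the one-sided spiral). With such a $\sigma$ your map $f$ is well defined, continuous and injective, and \Lr{injHom} finishes. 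So the moral is: rather than repairing a bad tour, rebuild a good one; the existence of \emph{some} tour is only used to certify the parity condition that makes the rebuild possible.
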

Before proving this, let me argue that it is indeed necessary to use both topologies $\etop$ and $\mtop$ in its statement (we can replace $\etop$ by $\GetopG$ though). 

Firstly, if \g is a \nlfg\ then \mtp\ never has a \tet: for if $\sig: S^1 \to \mtp$ is such a \tet\ and $x$ a vertex of \g of infinite degree, then as \sig\ has to traverse all the edges incident with $x$, its domain $S^1$ must contain a point $z$ that is an accumulation point of an infinite set of intervals each of which is mapped to a distinct edge of $x$. This point $z$ can only be mapped to $x$ by \sig, but as $x$ has open neighbourhoods in \mtp\ that contain no other vertex except $x$, \sig\ fails to be continuous at the point $z$, a contradiction. On the other hand, if \g is a countable connected graph then it follows from  Lemmas~\ref{etoc} and~\ref{etoconv} below that the existence of a \tet\ in \eti\ is equivalent to the assertion that \g has no finite cut of odd cardinality. Thus the ``right'' topology to look for a \tet\ is \etop.

What if we try to replace $\mtopx{L(G)}$ in \Tr{Lg} by $\etop(L(G))$? Consider the graph $G$ in \fig{figLg}. For this graph we can easily find a \tet\ in \eti. 
Moreover, $\etop(L(G))$ consists of $L(G)$ and precisely one additional point $\oo$, namely, the equivalence class containing $\{e,f,g,h\}$ and the unique edge-end of $L(G)$. Thus, if there is  a \hcy\ $H$ in $\etop(L(G))$, then $H$ must consist of $\oo$ and a double ray containing all vertices of $L(G)$ except $e,f,g$ and $h$. But it is easy to see that such a ray does not exist in $L(G)$. One way to go around this is to consider the topology $\etop'(L(G))$ istead of $\etop(L(G))$, that is, consider $\{e,f,g,h\}$ as distinct points with the same set of open neighbourhoods. In this case we would obtain a \hcy\ in $\etop'(L(G))$, but the interested reader can check that no such \hcy\ $H$ can have the property that every vertex of $L(G)$ is incident with two edges in $H$: some of the vertices $e,f,g,h$ would have to be an accumulation point in $H$ of other vertices. The \hcy\ $H$ we are going to construct in the proof of \Tr{Lg} however, does have the property that every vertex $x$ is incident with two edges in $H$: since $x$ has open neighbourhoods in $\mtopx{L(G)}$ that contain no other vertex, a continuous injective mapping $\tau: S^1 \to \mtopx{L(G)}$ can only reach $x$ along an edge and must use another edge to leave it.

\showFig{figLg}{A case in which $\etop(G)$ is eulerian but $\etop(L(G))$ is not hamiltonian.}

We will now prove \Tr{Lg}. In order to do so we will first need to characterize the graphs \g for which \eti\ has a \tet. This is achieved by the following two lemmas.

\begin{lemma}\label{etoc}
If \eti\ or \Getop\ has a \tet\ then \g has no finite cut of odd cardinality.
\end{lemma}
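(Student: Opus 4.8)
The plan is to argue by contradiction, using the classical principle that an Euler tour crosses any cut an even number of times. So suppose $Z$ --- which will denote either $\eti$ or $\Getop$ --- admits a \tet\ $\sigma\colon S^1\to Z$, and suppose $G$ has a finite cut $F=E(X,Y)$ with $|F|$ odd; in particular $X$ and $Y$ are nonempty. I will trace $\sigma$ once around $S^1$ and conclude that $|F|$ is even, a contradiction.

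The topological ingredient is that removing the open edges $\kreis F$ splits $Z$ into an ``$X$-side'' and a ``$Y$-side''. Let $P_X$ be the union, over all components $C$ of $G-F$ with $V(C)\subseteq X$, of $C\cup\EOO(C)$ together with the endpoints in $C$ of the edges of $F$, and define $P_Y$ symmetrically. Since $F$ is finite, every edge-end of $G$ has all of its rays in one component of $G-F$, and no component of $G-F$ meets both $X$ and $Y$; so every vertex, every inner point of an edge outside $F$, and every edge-end falls into exactly one of $P_X,P_Y$, i.e.\ $Z\sm\kreis F=P_X\sqcup P_Y$. A direct check against the basic open sets \eqref{eq} of $\Getop$, taken for the finite edge set $S=F$, shows that $P_X$ and $P_Y$ are open in $Z\sm\kreis F$; the only mildly delicate point is that a basic neighbourhood of an endpoint of an $F$-edge contains a half-edge of that edge, but this half-edge disappears once we pass to $Z\sm\kreis F$. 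When $Z=\eti$, one observes in addition that the identifications defining $\eti$ from $\Getop$ never identify a point of the $X$-side with one of the $Y$-side: two vertices separated by the finite cut $F$ cannot be joined by infinitely many edge-disjoint paths, and two rays on opposite sides of $F$ are not edge-equivalent. Hence in all cases $P_X,P_Y$ are disjoint nonempty clopen subsets of $Z\sm\kreis F$.

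Next I would use that $\sigma$ traverses each edge precisely once: for each edge $e$, the set $\sigma^{-1}(\kreis e)$ is a single open arc $J_e\subseteq S^1$, on which $\sigma$ is injective. For $f=x_fy_f\in F$ we have $x_f\in X$, $y_f\in Y$, so $f$ is not a loop; it follows that $\bar J_f$ is a nondegenerate arc and its two endpoints are mapped to $x_f$ and $y_f$. The $|F|$ pairwise disjoint open arcs $J_f$ ($f\in F$) cut $S^1$ into $|F|$ closed arcs $A_1,\dots,A_{|F|}$, arranged cyclically with one of the arcs $\bar J_f$ sitting between each consecutive pair (some $A_k$ may degenerate to single points). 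Points of $A_k$ lie in no $J_f=\sigma^{-1}(\kreis f)$, so $\sigma(A_k)\subseteq Z\sm\kreis F$, and $\sigma(A_k)$ is connected, hence contained entirely in $P_X$ or entirely in $P_Y$; colour $A_k$ by which one. For consecutive $A_k,A_{k+1}$ the arc sitting between them is $\bar J_f$ for some $f\in F$, and its two endpoints lie one in $A_k$ and one in $A_{k+1}$; since they are mapped to $x_f\in P_X$ and $y_f\in P_Y$, the arcs $A_k$ and $A_{k+1}$ get different colours. Thus, going once around $S^1$, the colour flips exactly $|F|$ times and must return to its initial value, forcing $|F|$ to be even --- contradicting that $|F|$ is odd. (For $|F|=1$ the same computation just says that the single connected arc $A_1$ has its image meeting both $P_X$ and $P_Y$.)

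The real content is the middle step: extracting the clopen partition $Z\sm\kreis F=P_X\sqcup P_Y$ from the definitions of $\eti$ and $\Getop$, with the attendant bookkeeping about the half-edges of the cut edges and, for $\eti$, about the vertex/edge-end identifications. Once that is available the colouring argument is immediate, and I anticipate no further difficulties; the whole proof is the finite-graph parity argument carried over to the $1$-complex-plus-ends setting.
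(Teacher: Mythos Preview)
Your proposal is correct and follows essentially the same approach as the paper: remove (inner points of) the cut edges, observe the space falls into two disjoint open pieces, and use a parity argument to conclude that a \tet\ must cross $F$ an even number of times. The paper's version is terser --- it removes just one inner point per cut edge and simply asserts that ``any continuous image of $S^1$ must cross $F$ an even number of times'' --- whereas you spell out the clopen partition and the colouring of the complementary arcs in detail; but the underlying idea is identical.
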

\begin{proof}
Since a \tet\ in \Getop\ easily induces a \tet\ in \eti\ (by replacing each point with its equivalence class) it suffices to prove the assertion for \eti.
Let $F$ be a finite cut of $G$, and let the set $F'\subset \eti$ consist of a choice of one inner point from each edge in $F$. Then by the definition of \Getop, $\eti\ \sm F'$ is the union of two disjoint open sets. It is now easy to see that any continuous image of $S^1$ in \eti\ must ``cross'' $F$ an even number of times, proving that if $|F|$ is odd then \eti\ cannot have a \tet.
\end{proof}

In \cite[Theorem~4]{fleisch} it was proved that if for a \lfg\ \g there is a \tet\ \sig\ in \fcg, then \sig\ can be chosen so that it visits every end of \g precisely once. Our next lemma extends this to a countable \G. The aforementioned statement was used in the same paper to prove the \lf\ version of \Tr{Lg}, by starting with such a \tet\ \sig\ of \g and modifying it in the obvious way to obtain a \hcy\ $H$ of $L(G)$; the extra condition on \sig\ was used there to make sure that $H$ visits no end of $L(G)$ more than once. 
Similarly, the now more elaborate condition on \sig\ in the following lemma will be necessary in order to make sure that the \hcy\ we will be constructing in the proof of \Tr{Lg} will be injective at the boundary points.

\begin{lemma}\label{etoconv}
If \g is a countable connected graph that has no finite cut of odd cardinality then \Getop\ has a \tet\ \sig. Moreover, \sig\ can be chosen so that for every  two distinct points $x,y\in S^1$ each of which is an accumulation point of preimages (under \sig) of vertices of \G, the points $\sig(x)$ and $\sig(y)$ can be separated in \Getop\ by finitely many edges (in particular, $\sig(x)\neq \sig(y)$).
\end{lemma}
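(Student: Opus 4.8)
The plan is to build $\sigma$ as a limit of Euler tours of finite Eulerian ``contractions'' of $G$, arranging the construction so that the separation clause drops out; the heart of the matter is that last clause, which is the non-locally-finite analogue of the fact from \cite[Theorem~4]{fleisch} that a topological Euler tour may be chosen visiting every end exactly once. Since every finite cut of $G$ is even, $G$ has no bridge (a bridge is a cut of size one), so every edge lies in a finite cycle; using this together with connectedness one builds an increasing sequence $D_1\subseteq D_2\subseteq\cdots$ of finite connected subgraphs, all of whose degrees are even, with $\bigcup_n E(D_n)=E(G)$, and with $D_{n+1}$ obtained from $D_n$ by adding a single finite cycle $Z_{n+1}$ that meets $V(D_n)$ and lies inside one component of $G-E(D_n)$ (to add a cycle, take an edge $e$ incident with $D_n$ but not in it; its component $K$ in $G-E(D_n)$ meets $V(D_n)$, and $e$ is not a bridge of $K$ — such a bridge together with an even set of $D_n$-edges would be an odd finite cut of $G$ — so $K$ has a finite cycle through $e$; connectedness forces every edge to be absorbed eventually). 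For each $n$ let $\hat G_n$ be the multigraph obtained from $G$ by contracting every component of $G-E(D_n)$: it is finite (every such component meets the finite set $V(D_n)$, so there are at most $|V(D_n)|$ of them, and the edges are those of $D_n$), connected, and has all degrees even (the degree of the vertex coming from a component $C$ is $\sum_{v\in V(C)}\deg_{D_n}(v)$), hence Eulerian; moreover $\hat G_n$ arises from $\hat G_{n+1}$ by collapsing a single subgraph (namely the part of $\hat G_{n+1}$ corresponding to the component $C^\ast$ of $G-E(D_n)$ containing $Z_{n+1}$), so that contracting any Euler tour of $\hat G_{n+1}$ yields one of $\hat G_n$.

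Next I would pick a \emph{compatible} sequence of Euler tours $\hat\tau_n$ of $\hat G_n$ — so that $\hat\tau_n$ is the contraction of $\hat\tau_{n+1}$ for every $n$ — by K\"onig's Infinity Lemma, applied to the tree whose level-$n$ vertices are the finitely many Euler tours of $\hat G_n$ (there is at least one) with parent given by contraction. I would run this argument inside the subclass of tours that are \emph{bundled}: all visits that $\hat\tau_n$ makes to a given vertex $\hat v_C$ of $\hat G_n$ are required to form one consecutive block of $\hat\tau_n$, and these blocks are required to be nested along $n$. Bundling can be imposed because the rearrangement underlying Hierholzer's theorem lets one permute freely the cyclic order of the closed sub-trails hanging at a fixed vertex; what must be checked is that bundled tours still form an infinite, finitely-branching tree under contraction, so that K\"onig's Lemma still applies.

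Having fixed the $\hat\tau_n$ and positive, summable edge lengths $\ell(e)$ with $\sum_{e\in E(G)}\ell(e)=1$, the limiting cyclic order on $E(G)$ lets me lay the edges out, in that order, along arcs of $S^1=\R/\Z$ of the respective lengths $\ell(e)$, and define $\sigma$ to traverse each edge along its arc. A point of the residual closed nowhere-dense set $N\subseteq S^1$ (the complement of the union of the edge-arcs) is a limit of edge-arcs whose edges escape every $D_n$; using the compactness of $\Getop$ (which follows from that of $\eti$ in Corollary~\ref{corcomp}, the two spaces having the same open sets) and Lemma~\ref{comb}, these edges and the vertices between them converge in $\Getop$ to a point of $V(G)\cup\EOO(G)$, which I take as $\sigma$'s value there. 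Then $\sigma$ is continuous — the preimage of the basic open set $O_C^{(n)}=C\cup\EOO(C)\cup E'(C)$ attached to a vertex $\hat v_C$ of $\hat G_n$ is a finite union of arcs, one per visit of $\hat\tau_n$ to $\hat v_C$ — and traverses each edge exactly once, so it is a topological Euler tour of $\Getop$.

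For the separation clause, suppose $x\neq y$ in $S^1$ are both accumulation points of $\sigma^{-1}(V(G))$, so $\sigma(x),\sigma(y)\in V(G)\cup\EOO(G)$. For each $n$, $\sigma(x)$ lies in a unique basic open set $O_{C_n}^{(n)}$ with $C_n$ a component of $G-E(D_n)$, the point $x$ lies in the open set $\sigma^{-1}(O_{C_n}^{(n)})$ (a finite union of arcs), and the $C_n$ nest as $n$ grows; likewise for $y$, with a nested sequence $C'_n$. If $C_n\neq C'_n$ for some $n$, then $\sigma(x)$ and $\sigma(y)$ lie in distinct basic open sets with respect to the finite edge set $E(D_n)$, hence are separated by finitely many edges, as wanted. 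Otherwise $C_n=C'_n$ for every $n$; but bundling excludes this: the block of $\hat\tau_n$ belonging to $\hat v_{C_n}$ occupies one arc $J_n$ of $S^1$ with $\sigma^{-1}(O_{C_n}^{(n)})\subseteq J_n$, the $J_n$ nest, and $|J_n|\to 0$ since the total length of the edges lying inside $C_n$ tends to $0$ (every edge is eventually absorbed into some $D_m$, and the lengths are summable); so $\bigcap_n J_n$ is a single point and at most one accumulation point of $\sigma^{-1}(V(G))$ can track the nested sequence $(C_n)$, forcing $x=y$, a contradiction. I expect the real work to be in the bundling: imposing it while keeping the tours compatible — the number of visits to a persistent vertex grows with $n$, so the blocks must be re-formed at each stage, coherently under contraction — is the step that generalises the ``visits each end once'' phenomenon of \cite[Theorem~4]{fleisch}, and I expect it to be the main obstacle.
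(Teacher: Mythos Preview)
Your overall plan---build $\sigma$ as a limit of Euler tours of finite ``contractions'' $\hat G_n$, and arrange that each component $C$ of $G-E(D_n)$ is accessed through a single arc of $S^1$ so that the nested arcs shrink and pin down a unique accumulation point---is exactly the paper's strategy. The gap is in your formulation of \emph{bundled}. Read literally, ``all visits of $\hat\tau_n$ to $\hat v_C$ form one consecutive block'' forces every non-loop edge at $\hat v_C$ to be one of just two (the entry and exit edges of the block); equivalently, $\hat G_n$ minus its loops must be a simple cycle. This cannot be maintained. For a concrete obstruction, take three infinite ``blobs'' $A,B,C$ (each with infinitely many internal edges) and put exactly two edges between each pair of blobs; this graph is countable, connected, and every finite cut is even. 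Once $D_n$ contains all six inter-blob edges---which it must eventually---the three blobs are the components of $G-E(D_n)$, and $\hat G_n$ minus loops has three vertices each of degree $4$, so no Euler tour is bundled in your sense. Your K\"onig tree is then empty from that level on.

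What the paper does instead is strictly weaker and is the fix you need: it does \emph{not} ask all visits to a contracted vertex to be consecutive, but designates, for each component $D$ of $G-F_i$, a single gateway vertex $v_D$ (these form a set $S_i$ with the invariant that every component meets $S_i$ in at most one vertex) and a single interval $I_{v_D}\subseteq S^1$ currently mapped to $v_D$. All future insertions into $D$ go through $I_{v_D}$; the other visits to $v_D$ are left alone. This invariant is maintained by direct recursive construction rather than K\"onig: at step $i$ the paper extends into every component via its $I_{v_D}$ and checks that each finer component of $G-F_{i+1}$ still meets $S_{i+1}$ in at most one vertex, which follows because it lies inside some old component. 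Your separation argument then goes through verbatim with $J_n:=I_{v_{C_n}}$. If you prefer the K\"onig route, you would have to carry the designation as part of the data at each node of the tree (an Euler tour of $\hat G_n$ together with one chosen visit per vertex) and check compatibility under contraction; but verifying that such choices propagate is precisely the content of the paper's inductive step, so the direct construction is cleaner here.
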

\begin{proof}
Clearly $G$ has a finite cycle $C$, because otherwise every edge would be a cut of cardinality 1. Fix a continuous function  $\sigma_0 : S^1 \to C$, that maps a closed, non-trivial interval of $S^1$ to each vertex and edge of $C$ (here an edge contains its endvertices). 

We will now inductively, in $\omega$ steps, define a \tet\ $\sigma$ in \Getop. After each step $i$ we will have defined a finite set of edges $F_i$ and a continuous surjection $\sigma_i : S^1 \to \overline{F_i}$, where $\overline{F_i}$ is the subspace of $\Getop$ consisting of all edges in $F_i$ and their incident vertices. In addition, we will have chosen a set of vertices $S_i$ in $\overline{F_i}$, with the property that every component of $G - F_i$ is incident with at most one vertex in $S_i$. For each vertex $v \in S_i$, we will choose a closed interval $I_v=I^i_v$ of $S^1$ mapped to $v$ by $\sigma_i$. (These intervals will be used in subsequent steps to accommodate the parts of the graph not yet in the image of $\sigma_i$). Then, at step $i+1$, we will pick a suitable set of finite circuits in $E(G) - F_i$, put them into $F_i$ to obtain $F_{i+1}$, and modify $\sigma_i$ to $\sigma_{i+1}: S^1 \to \overline{F_{i+1}}$. We might also add some vertices to $S_i$ to obtain $S_{i+1}$. 

To begin with, let $F_0=E(C)$, $S_0=\emptyset$ and $\sigma_0$ as defined above. Let $e_1,e_2,\ldots$ be an enumeration of the edges of $G$. 
Then, for $i=1,2,\ldots$, perform a step of the following type. In step $i$, let for a moment $S_i = S_{i-1}$ and consider the components of $G - F_{i-1}$, which are finitely many since $F_{i-1}$ is finite. For every such component, say $D$, there is by the inductive hypothesis at most one vertex $v=v_D \in S_i$ incident with $D$. If there is none, then just pick any vertex $v$ incident with both $D$ and $F_{i-1}$, put it into $S_{i}$, and let $I_v$ be any maximal interval of $S^1$ mapped to $v$ by $\sigma_{i-1}$; recall that by the construction of the $\sigma_{i}$, $I_v$ is non-trivial. 

We claim that every edge in $G - F_{i-1}$ lies in a finite cycle. Indeed, if some edge $e=wz \in E(G) \sm F_{i-1}$ does not, then $\{e\} \cup F_{i-1}$ separates $w$ from $z$ in $G$. But then, let $\{W,Z\}$ be a bipartition of the vertices of $G$ such that $w\in W$, $z\in Z$, and there are no edges between $W$ and $Z$ in $G - (\{e\} \cup F_{i-1})$. Let $E(W,Z)$ be the set of edges in $G$ between $W$ and $Z$. Since $F_{i-1}$ is a finite edge-disjoint union of finite circuits, $|E(W,Z) \cap F_{i-1}|$ is even; thus $E(W,Z) \ni e$ is an odd cut of $G$, which contradicts our assumption. 

Similarly, if $D$ is any component of $G - F_{i-1}$ and we let  $j=j_D := \min \{k \mid e_k \in E(D)\}$, then $D$ contains a finite edge-set $C_D$ in $D$ such that $C_D$ is incident with $v_D$, it contains the edge $e_j$, and $C_D=C_1 \cup \ldots \cup C_k$ where the $C_i$ are pairwise disjoint finite circuits and $C_i$ is incident with $C_{i-1}$ \fe\ $1< i\leq k$. Indeed, if $e_j$ happens to be incident with $v_D$ then we let $C_D$ consist of a single circuit containing $e_j$, and if not we pick a shortest \pth{v_D}{e_j}\ $P$, let $C_1$ be a finite circuit in $D$ containing the first edge of $P$, let $C_2$ be a finite circuit  in $D - C_1$ containing the first edge of $P$ not in $C_1$,  and so on; such a $C_2$ exists by a similar argument as in the previous paragraph. If $v$ has infinite degree then we slightly modify the choice of $C_D$ so that, in addition to the above requirements, $C_D$ contains at least $4$ edges incident with $v$. Let $F_i$ be the union of $F_{i-1}$ with all these edge-sets $C_D$, one for each component $D$ of $G - F_{i-1}$.

Then, to obtain $\sigma_i$ from $\sigma_{i-1}$, for each component $D$ of $G - F_{i-1}$, let $\sigma_i$ map $I_v=I^{i-1}_{v_D}$ continuously to $C_D$, in such a way that every edge in $C_D$ is traversed precisely once, and each time a point $x\in I_v$ is mapped to some vertex $w$ (incident with $C_D$) there is a (closed) non-trivial subinterval $J \ni x$ of $I_v$ such that every point of $J$ is mapped to $w$; however, make sure that each such subinterval $J$ has length at most $2^{-i}$. This defines $\sigma_i$. To complete step $i$, we still have to define the interval $I^i_v$ \fe\ $v\in S_i$. If step $i$ did not affect $v$ yet, that is, if we did not modify the image of $I^{i-1}_v$ when defining $\sigma_i$ from $\sigma_{i-1}$ (which happens if all edges of $v$ were in $F_{i-1}$), then we let $I^i_v:= I^{i-1}_v$. If we did  modify the image of $I^{i-1}_v$, then we let $I^i_v$ be one of the maximal (closed) subintervals of $I^{i-1}_v$ mapped by $\sigma_i$ to $v$; such an interval always exists and is non-trivial by the construction of $\sigma_i$. If, in addition, $v$ has infinite degree, then we have to be a bit more careful with our choice of an interval $I^{i}_{v}$: by the choice of the $C_D$ there are at least $4$ edges in some $C_D$ incident with $v=v_D$, and so there is an {\em inner} maximal subinterval $I'$ of $I_v$ mapped to $v$ by $\sigma_i$; we let $I^{i}_{v}:=I'$. This completes step $i$.

We claim that for every point $x \in S^1$ the images $\sigma_i(x)$ converge to a point in $\Getop$. Indeed, since $\Getop$ is compact, $(\sigma_i(x))_{i\in \N}$ has a subsequences converging to a point $y$ in \Getop. Moreover, by the choice of the $C_D$, for every edge $e_j\in E(G)$ there is an $i$ so that $e_j\in F_m$ holds for all $m\geq i$. Thus, for every basic open set $O$ of $y$ there is an $i$ such that the component $C$ of $G - F_i$ in which $y$ lives is a basic open set of $y$ (up to some half edges) contained in $O$, and so $C$ contains an element $p$ of $(\sigma_i(x))_{i\in \N}$. By the definition of the $\sigma_{i}$, if $x$ is mapped to a point $p$ by $\sigma_{i+1}$, then for all steps succeeding step $i+1$ the image of $x$ will lie in the component of $G - F_i$ that contains $p$. This means that $C$, and thus $O$, contains all but finitely many members of $(\sigma_i(x))_{i\in \N}$, and so the whole sequence $(\sigma_i(x))_{i\in \N}$ converges to $y$.

Hence we may define a map $\sigma: S^1 \to \Getop$ mapping every point $x \in S^1$ to a limit of $(\sigma_i(x))_{i\in \N}$. 

Our next aim is to prove that $\sigma$ is continuous. For this, we have to show that the preimage of any basic open set of $\Getop$ is open. This is obvious for basic open sets of inner points of edges. For every other point $y\in \EOO\cup V$, the sequence of basic open sets of $y$ that arise after deleting  $F_i, i \in \N$ is converging, so it suffices to consider the basic open sets of that form, and it is straightforward to check that their preimages are indeed open. Thus  $\sigma$ is continuous, and since by construction it traverses each edge exactly once it is a \tet.

Call two points in $\EOO\cup V$ \defi{equivalent} if they cannot be separated by a finite edgeset. Call a point $x\in S^1$ a \defi{hopping point} if $x$ lies in an interval of the form $I^i_v$ for every step $i$ (but perhaps for a different vertex $v$ in different steps). We now claim that 
\begin{equation} \label{star} \begin{minipage}[c]{0.85\textwidth}
for every two distinct hopping points $x,y \in S^1$, $\sigma(x)$ and $\sigma(y)$ are not equivalent.
\end{minipage}\ignorespacesafterend \tag{\ensuremath{*}} \end{equation}
Indeed, for any point $p\in \Getop$, there is in every step $i$ at most one vertex $v$ in $S_i$ meeting the component $C$ of $G -F_i$ in which $p$ lives. Moreover, all points equivalent to  $p$ also live in $C$. Thus $I_v$ is the only interval of $S^1$ in which $p$ and its equivalent points can be accommodated. Since $I_v$ gets subdivided after every step, there is only one point of $S^1$ that can be mapped by $\sigma$ to a point equivalent to $p$.

Since by construction the only points of $S^1$ that can be accumulation points of preimages under \sig\ of vertices are the hopping points, the second part of the assertion of \Lr{etoconv} follows from~\eqref{star}.
\end{proof}

\begin{proof}[Proof of \Tr{Lg}]
Let \g be a countable graph such that \eti\ has a \tet. Then \g has no finite cut of odd cardinality by \Lr{etoc}; thus \Getop\ has a \tet\ \sig\ as provided by \Lr{etoconv}.

Let $v$ be a vertex of infinite degree. It follows easily by the choice of the circuits $C_D$ and the intervals $I^{i}_{v_D}$  in the proof of \Lr{etoconv}, that whenever \sig\ runs into $v$ along an edge then it must also leave $v$ along an edge (rather than along an arc containing a double ray); formally, this fact can be stated as follows:
\begin{equation} \label{club} \begin{minipage}[c]{0.85\textwidth}
For any interval $I$ of $S^1$ with $\sigma(I)=\{v\}$, if there is an interval $I'\supset I$ of $S^1$ such that $\sigma(I')\subseteq e$ where $e$ is an edge (incident with $v$) in \eti, then there is an interval $I''\supset I'$ of $S^1$ such that $I'' \sm I$ is disconnected and $\sigma(I'')\subseteq e\cup e'$, where $e'$  is also an edge in \eti.
\end{minipage}\ignorespacesafterend \tag{\ensuremath{\diamondsuit}} \end{equation}
(We stated~\eqref{club} only for vertices of infinite degree because it is only interesting for such vertices; it is trivially true for vertices of finite degree).

We are now going to transform \sig\ into a \hcy\ $\tau$ of $\mtopx{L(G)}$.
Note that if a set $F\subseteq E(G)$ converges in \eti\ then $F\subseteq V(L(G))$ also converges in $\mtopx{L(G)}$; to see this, recall that the basic open sets in \eti\ are defined by removing finite edge-sets, while the \bos s in $\mtopx{L(G)}$ are defined by removing finite vertex sets. Thus we can define a function $\pi$ mapping any edge-end \oo\ of $G$ to the end of $L(G)$ to which the edge-set of any ray in \oo\ converges, and it is straightforward to check that $\pi$ is well defined. Moreover, for any vertex $v$ of infinite degree in \g let $\pi'(v)$ be the accumulation point of $E(v)$ in $\mtopx{L(G)}$.


We now transform \sig\ into a new mapping $\sig'$, which we will then slightly modify to obtain the required \hcy\ in $\mtopx{L(G)}$. Let $\sigma': S^1 \to |L(G)|$ be the mapping defined as follows:
\begin{itemize}
\item $\sigma'$ maps the  preimage under $\sigma$ of each edge $e\in E(G)$ to $e\in V(L(G))$;
\item for each interval $I$ of $S^1$ mapped  by $\sigma$ to a trail $u e y e' w$ where $u,y,w$ are vertices, define $I'$ to be the (non-trivial) subinterval of $I$ mapped by $\sigma$ to $y$, and let $\sigma'$ map $I'$ continuously and bijectively to the edge $ee' \in E(L(G))$;
\item if $\sigma(x)=\oo \in \EOO(G)$ then let $\sigma'(x)=\pi(\oo)$.
\item if $\sigma(x)=v$ where $v$ is a vertex of infinite degree in \g and $x$ does not lie in an interval $I$ of $S^1$ mapped by $\sigma$ to a trail $u e v e' w$ (which can be the case if $x$ is a hopping point), then let $\sigma'(x)=\pi'(v)$.
\end{itemize}

It follows by the construction of $\sigma'$ and~\eqref{club} that the image of $\sig'$ does not contain any vertex of \G, in other words, that $\sig'(S^1)\subseteq \mtopx{L(G)}$.

By construction, $\sigma'$ maps a non-trivial interval to every vertex it traverses, which we do not want since a \hcy\ must be injective; however, it is easy to modify $\sigma'$ locally to obtain a mapping $\tau: S^1 \to |L(G)|$ that maps a single point to each vertex.

It follows easily by the continuity of \sig\ and the definition of $\pi$ and $\pi'$ that $\sigma'$, and thus $\tau$, is continuous. Since \sig\ is a \tet\ of $G$, $\tau$ visits every vertex of $L(G)$ precisely once. Moreover,  the second part of the assertion of \Lr{etoconv} implies that $\tau$ visits no end more than once, which means that it injective. Since $S^1$ is compact and $|L(G)|$ Hausdorff, \Lr{injHom} implies that $\tau$ is a homeomorphism, and thus a \hcy\ of $\mtopx{L(G)}$.
\end{proof}

Having seen \Tr{Lg} and the discussion in \Sr{introLg}, it is natural to ask if the following is true: for every graph $G$ and \lER, if \ltp\ has a \tet\ then \ltopxl{\lL}{L(G)}\ ---as defined in \Sr{introLg}--- has a \hcy.
This is however not the case: suppose $G$ and $\ell$ are such that \ltp\ has a \tet\ and the boundary of \ltopxl{\lL}{L(G)}\ contains a subspace $I$ homeomorphic to the unit interval ---which can easily happen, see \Tr{Lind}. Then, \ltopxl{\lL}{L(G)}\ cannot have a \hcy\ $\tau$, because the preimage of $I$ under $\tau$ must be totally disconnected and $\tau$ would then induce a homeomorphism between $I$ and a totally disconnected set.

\note{
\section{Euler tours} \label{secet}

\begin{conjecture}
\ltp\ has a \tet\ \iff\ \ltp\ is compact and \fcg\ has a \tet.
\end{conjecture}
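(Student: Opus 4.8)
The plan is to prove the two implications separately; throughout, as in the rest of the discussion of \fcg, \g is assumed \lf, connected and countable. The forward implication is easy. Let $\sigma\colon S^1\to\ltp$ be a \tet. Its image is a compact, hence closed, subset of the metric space \ltp, and it contains every edge and so the dense set $G$; therefore $\sigma(S^1)=\ltp$ and \ltp\ is compact. Next I would show, exactly as in \Lr{etoc}, that \g has no finite cut of odd cardinality: if $F=E(X,Y)$ is a finite cut then $r:=\min_{e\in F}\ell(e)>0$ and $d_\ell(x,y)\ge r$ whenever $F$ separates $x$ from $y$, so the \ltp-closures of $X$ and $Y$ are disjoint and $\ltp\setminus\kreis F$ is the disjoint union of two relatively clopen sets separating them; hence any continuous image of $S^1$ in \ltp\ must ``cross'' $F$ an even number of times, which forces $|F|$ to be even. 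By \Lr{etoconv} (and $\Getop=\eti=\fcg$ for \lf\ \G) it follows that \fcg\ has a \tet.

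For the backward implication, assume \ltp\ is compact and \fcg\ has a \tet; then \g has no finite cut of odd cardinality by \Lr{etoc}. I would rerun the inductive construction from the proof of \Lr{etoconv}, building continuous maps $\sigma_i\colon S^1\to\overline{F_i}$ with $F_i$ a finite, edge-disjoint union of circuits satisfying $\bigcup_i F_i=E(G)$, together with the nested, shrinking ``hopping intervals'' $I^i_v$. The combinatorial choices are unchanged; the new point is that the convergence of the $\sigma_i$ and the continuity of their limit must now be checked against $d_\ell$ rather than against the end topology. For $x\in S^1$ which is not a hopping point, $\sigma_i(x)$ is eventually constant and so converges; for a hopping point $x$, $\sigma_i(x)$ is a vertex $v^{(i)}$, and $v^{(i)},v^{(i+1)}$ lie in the closure of a single component $D^{(i)}$ of $G-F_i$. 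Thus it would suffice to choose, at each step, the newly added circuits so that the $d_\ell$-diameters of the $D^{(i)}$ have a finite sum: then $(v^{(i)})$ is a \Cs\ in the complete space \ltp, its limit defines $\sigma(x)$, and the Euler property and the continuity of $\sigma$ would follow as in \Lr{etoconv}, with compactness of \ltp\ entering through total boundedness and \Tr{connbound}.

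Controlling these diameters is the main obstacle, and its naive form fails: in general one cannot, for a given $\varepsilon>0$, find a finite edge set $F$ with every component of $G-F$ of $d_\ell$-diameter less than $\varepsilon$, because when \ltp\ is a hyperbolic compactification (see \Sr{secHyp} and \fig{exhyp}) each component of $G-F$ is dense near a whole connected piece of the boundary of fixed positive diameter. The remedy should be to distribute the traversal of the edges ``near'' an end \oo\ over a Cantor-like closed subset $Z_{\oo}$ of $S^1$ rather than over a single hopping point, arranging that the sub-trail filling each gap of $Z_{\oo}$ converges in \ltp\ to a point of \blom\ and that these limit points vary continuously along $Z_{\oo}$; here \Tr{connbound}, which makes \blom\ a continuum, together with total boundedness of \ltp, should let one cover \blom\ by finitely many small, roughly connected pieces and route the remote edges through them accordingly. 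A further subtlety, and a plausible reason to add a hypothesis to the statement, is that a \tet\ forces \ltp\ to be locally connected, being a continuous image of $S^1$; so one must either prove that compactness of \ltp\ together with the absence of odd cuts already implies local connectedness of \ltp, or add ``locally connected'' to the right-hand side of the equivalence. Organising this boundary traversal, and resolving the local-connectedness question, are where I expect the work to lie.
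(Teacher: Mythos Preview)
There is nothing to compare against: in the paper this statement is a \emph{conjecture}, tucked inside a suppressed \texttt{\textbackslash note} block, and the paper offers no proof. The only accompanying remark is a one-sentence sketch for the very special case of the hyperbolic compactification of a hyperbolic \emph{planar} Cayley graph, where one decomposes the edge set into face boundaries (using compactness to push the finite fact to the infinite case) and inserts these cycles recursively at the right vertices. That argument leans essentially on planarity and says nothing about general~$\ell$.

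Your forward implication is correct and complete: the image of a \tet\ is compact and dense in \ltp, hence equals \ltp; the odd-cut argument from \Lr{etoc} transfers verbatim using $\min_{e\in F}\ell(e)$ as a lower bound on $d_\ell$ across the cut; and \Lr{etoconv} then supplies a \tet\ in \fcg. For the backward implication you have not given a proof but a diagnosis of the obstacles --- which is the honest state of affairs, since the statement is open in the paper. Your local-connectedness observation is a genuine point against the conjecture as written: by Hahn--Mazurkiewicz a \tet\ forces \ltp\ to be a Peano continuum, while ``\ltp\ compact and \fcg\ eulerian'' does not visibly force local connectedness of \ltp, so either an extra hypothesis is needed or a separate lemma must be proved. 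The paper's planar-hyperbolic sketch sidesteps this issue because in that setting the boundary is automatically locally connected, but that is extraneous to the general conjecture.
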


This is true for the hyperbolic compactification of a hyp. planar Cayley graph: any finite plane eulerian graph has a decomposition of its edges into face boundaries, and by compactness this is also true for infinite graphs. Take such a decomposition, pick a cycle, and insert cycles recursively at the right vertices to get a \tet\ in the hyperbolic compactification.
}

\note{
	\section{Stochastics} \label{secSto}
\subsection{Percolation-like}

You could try to assign lengths to the edges of, say, a binary tree randomly according to some rule with some parameter and look for phase transition 		
	phenomena as far as the size of the boundary of \ltop\ is concerned.
}

\comment{
	\section{Future}
	\subsection{Brownian motion}

When studying random walks on graphs the basic question is whether the random walk is transient or recurrent, i.e.\ whether it escapes to infinity or not \cite{woessBook}. With \ltop\ we might be able to have random walks going to infinity and coming back: a random walk can be modelled by Brownian motion on the graph, where edges have been given appropriate lengths so that the transition probabilities between vertices are the desired ones \cite{LyonsBook}. Now if these edge lengths \el\ give rise to topological paths of finite length in \ltop\ between a vertex and a \bp,  the \bp\ might be hit after a finite amount of time. Could we define a Brownian motion on \ltp\ that can visit a \bp\ and come out of it using some topological path? This seems to me to be hard to do in general, but could be doable if \g is, say, a Cayley graph.

}

\section{Outlook}

In this paper we studied several aspects of \ltop, proving basic facts and  discussing applications. I expect that the current work will lead to interesting research in the future, and I hope that other researchers will contribute.

Some open problems were suggested here, but there are also other directions in which further work could be done. The general theory developed in this paper may offer a framework for other specific compactifications of infinite graphs that, next to the Freudenthal and the hyperbolic compactification ---see \Sr{subSpec}--- have applications in the study of infinite graphs and groups. A further possibility would be to study Brownian motion on the space \ltp, and seek interactions with the study of electrical networks as discussed in \Sr{inele}. Finally, the new homology described in \Sr{sechom} suggests interactions between the study of infinite graphs and other spaces, e.g.\ like  in \Cnr{conjMcL}, that may be worth following up.

\acknowledgement{I am grateful to the research group of Reinhard Diestel, especially to Henning Bruhn, Johannes Carmesin, Matthias Hamann, Fabian Hundertmark, Julian Pott, Philipp Spr\"ussel and Georg Zetzsche, for proof--reading this paper and making valuable suggestions. I am also grateful to the student in Haifa who pointed out to me the proof of \Cr{corcomp}. I regret I did not ask his name.}
 
\bibliographystyle{plain}
\bibliography{collective}

\end{document}